\theoremstyle{plain}
\newtheorem{thm}{Theorem}[section]
\newtheorem{prop}[thm]{Proposition}
\newtheorem{lemma}[thm]{Lemma}
\newtheorem{cor}[thm]{Corollary}
\theoremstyle{definition}\newtheorem{Def}[thm]{Definition}
\theoremstyle{remark}\newtheorem{rmk}[thm]{Remark}
\newtheorem{ex}[thm]{Example}
\numberwithin{equation}{section}
\newcommand*{\Z}{\ensuremath{\mathbf Z}}
\begin{document}

\title
{Clock theorems for triangulated surfaces}

\author{Camden Hine}

\address{Georgia Institute of Technology}
\email{camdenhine@gatech.edu}

\author{Tam\'as K\'alm\'an}

\address{Tokyo Institute of Technology}
\email{kalman@math.titech.ac.jp}
\urladdr{www.math.titech.ac.jp/\char126 kalman}

\date{}

\keywords{}


\begin{abstract}
We investigate 
triangulations of the two-dimensional sphere and torus with the faces properly colored white and black.
We focus on matchings between white triangles and incident vertices. On the torus our objects are perfect pairings, whereas on the sphere this is only true after removing one triangle and its vertices. In the latter case, such matchings (first studied by Tutte) extend the notion of state in
Kauffman's formal knot theory and we show that his Clock Theorem, in its form due to Gilmer and Litherland, also extends: the set of matchings naturally forms a distributive lattice. 
Here the role of state transposition is played by a simple local operation about black triangles. 
By contrast, on the torus, the analogous state transition graph is usually 
disconnected: some of its components still form distributive lattices 
with global maxima and minima, 
while other components contain directed cycles and are without local extrema.
\end{abstract}

\maketitle

\section{Introduction}
\label{sec:intro}

In this paper we investigate properly three-colored triangulations of 
orientable surfaces, in particular of the sphere and the torus. These objects, also known as Latin bitrades, have been actively researched, see, e.g., \cite{cw,bm}. Our results have two sources of motivation. One is topological: such triangulations can be viewed as an extension of (isotopy classes of) generic immersed curves and in the spherical case, our main result generalizes Kauffman's Clock Theorem \cite{kauffman}, which is an important step in his proof of the state expansion of the Alexander polynomial. The other is combinatorial: again in the spherical case, Tutte defined certain bijections between objects within the triangulation and used them to establish his Tree Trinity Theorem \cite{tutte1,tutte3}. These bijections generalize Kauffman's states. After merging these two threads, it becomes natural to move our investigation from the sphere to the torus, where in a sense the situation is even more symmetric and beautiful.

We will call the colors of the vertices of the triangulation red, green, and blue. Triangles are colored white or black according to the cyclic order of the three colors around them. It is also natural to color each edge with the `third color' not occurring among its endpoints. We will refer to such a structure as a \emph{trinity}.

It is easy to see that in each trinity red edges, green edges, blue edges, white triangles, and black triangles all have the same number $n$. The number of vertices exceeds $n$ by the Euler characteristic of the surface. 

Tutte's Tree Trinity Theorem \cite{tutte1,tutte3} (see Theorem \ref{ttt} below) concerns trinities on $S^2$. He gave a remarkable proof using matchings between vertices and incident white triangles. To adjust the sizes of the two sets, he excluded an `outer' white triangle and its three vertices. 
In the case of the torus such a thing is not necessary: one may consider \emph{Tutte matchings} between all white triangles and all vertices (so that pairs are incident), both of which form sets of size $n$. These two cases (\emph{planar} and \emph{toric} trinities) are the focus of this paper.

Following Tutte, we will indicate matches between triangles and vertices by drawing an arrow across the triangle and terminating at the vertex. At the tail end, the arrows are extended beyond the white triangle and into the adjacent black one, all the way to the opposite vertex. That way the two ends of the arrow, and the edge that it crosses, are all of the same color. We let the arrow inherit this color as well. In other words, arrows of color $X$ are chosen from among the edges of the dual graph $G_X^*$, with respect to our surface, of the graph $G_X$ formed by the edges of the trinity of color $X$. Note that as $G_X$ is bipartite, $G_X^*$ is naturally oriented, exactly by the rule (the same for all three colors $X$) that the tail end of each arrow is in a black triangle.

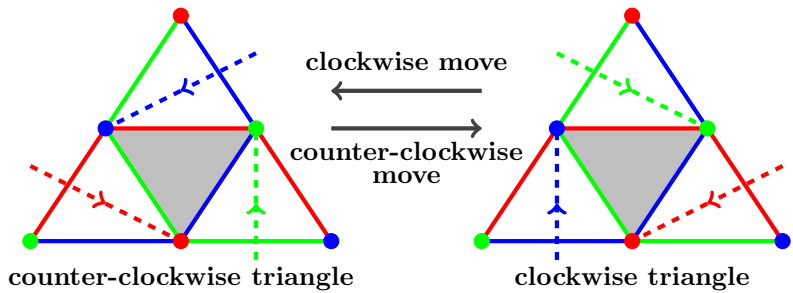
\begin{figure}[h]
\begin{tikzpicture}[scale=.5]
\path [fill=lightgray] (2,4) -- (6,4) -- (4,1);

\draw [ultra thick,red] (0,1) -- (2,4);
\draw [ultra thick,red] (2,4) -- (6,4);
\draw [ultra thick,red] (6,4) -- (8,1);
\draw [ultra thick,blue] (0,1) -- (4,1);
\draw [ultra thick,blue] (4,1) -- (6,4);
\draw [ultra thick,blue] (6,4) -- (4,7);
\draw [ultra thick,green] (4,7) -- (2,4);
\draw [ultra thick,green] (2,4) -- (4,1);
\draw [ultra thick,green] (4,1) -- (8,1);

\draw [->,ultra thick,blue,dashed] (6,6) to (4,5);
\draw [ultra thick,blue,dashed] (4,5) to (2,4);
\draw [->,ultra thick,green,dashed] (6,.5) to (6,2);
\draw [ultra thick,green,dashed] (6,2) to (6,4);
\draw [->,ultra thick,red,dashed] (0,3) to (2,2);
\draw [ultra thick,red,dashed] (2,2) to (4,1);

\draw [fill=green,green] (0,1) circle [radius=0.2];
\draw [fill=green,green] (6,4) circle [radius=0.2];
\draw [fill=red,red] (4,1) circle [radius=0.2];
\draw [fill=red,red] (4,7) circle [radius=0.2];
\draw [fill=blue,blue] (2,4) circle [radius=0.2];
\draw [fill=blue,blue] (8,1) circle [radius=0.2];

\node at (4,0) {\textbf{counter-clockwise triangle}};

\begin{scope}[shift={(12,0)}]
\path [fill=lightgray] (2,4) -- (6,4) -- (4,1);

\draw [ultra thick,red] (0,1) -- (2,4);
\draw [ultra thick,red] (2,4) -- (6,4);
\draw [ultra thick,red] (6,4) -- (8,1);
\draw [ultra thick,blue] (0,1) -- (4,1);
\draw [ultra thick,blue] (4,1) -- (6,4);
\draw [ultra thick,blue] (6,4) -- (4,7);
\draw [ultra thick,green] (4,7) -- (2,4);
\draw [ultra thick,green] (2,4) -- (4,1);
\draw [ultra thick,green] (4,1) -- (8,1);

\draw [->,ultra thick,blue,dashed] (2,.5) to (2,2);
\draw [ultra thick,blue,dashed] (2,2) to (2,4);
\draw [->,ultra thick,green,dashed] (2,6) to (4,5);
\draw [ultra thick,green,dashed] (4,5) to (6,4);
\draw [->,ultra thick,red,dashed] (8,3) to (6,2);
\draw [ultra thick,red,dashed] (6,2) to (4,1);

\draw [fill=green,green] (0,1) circle [radius=0.2];
\draw [fill=green,green] (6,4) circle [radius=0.2];
\draw [fill=red,red] (4,1) circle [radius=0.2];
\draw [fill=red,red] (4,7) circle [radius=0.2];
\draw [fill=blue,blue] (2,4) circle [radius=0.2];
\draw [fill=blue,blue] (8,1) circle [radius=0.2];

\node at (4,0) {\textbf{clockwise triangle}};
\end{scope}

\draw [->,ultra thick,darkgray] (12,5) to (8,5);
\draw [->,ultra thick,darkgray] (8,4) to (12,4);
\node at (10,3.4) {\textbf{counter-clockwise}};
\node at (10,2.7) {\textbf{move}};
\node at (10,5.8) {\textbf{clockwise move}};

\end{tikzpicture}
\caption{State transition about an empty black triangle.}
\label{fig:clockmove}
\end{figure}

The arrows that constitute a Tutte matching may not cross each other over white triangles but may cross over black ones. On the other hand, black triangles may also be \emph{empty} (void of arrows), something that does not happen to the white ones, except the outer triangle in the planar case.

We will also refer to Tutte matchings as \emph{states}, a term borrowed from statistical mechanics via knot theory. We will pay particular attention to the following \emph{state transitions}, inspired directly (as explained in subsection \ref{sec:kauffman}) by Kauffman's state transpositions (clock moves) of his formal knot theory \cite{kauffman}. When a state has an empty black triangle, its vertices must be matched with the three adjacent white triangles. There are exactly two ways to do this and we refer to the empty black triangle as \emph{clockwise} or \emph{counterclockwise} accordingly. Now, a \emph{clockwise move} turns a clockwise triangle into a counter-clockwise one, as seen in Figure \ref{fig:clockmove}. Its inverse is a \emph{counter-clockwise move}. (We will also allow other moves, see Figure \ref{fig:general_clock} and subsection \ref{sec:decomp}, but this is the basic idea.)

We define a \emph{state transition graph}, where the vertices correspond to the states (matchings), and two vertices are connected if it is possible to go from one to the other by performing a clockwise or counter-clockwise move. We also record the direction of our moves, wherein moving `up' in the state transition graph will correspond to a clockwise move, and moving `down' will correspond to a counter-clockwise move. A (local) maximum is a state that admits only counter-clockwise moves and (local) minima are defined analogously.

We will distinguish between two kinds of connected components of the state transition graph, cyclic and acyclic. 
A component is \emph{cyclic} if there exists a state in it to which it is possible to return after a non-trivial series of only clockwise moves. Let us call such a state \emph{recurrent}. Components without recurrent states will be called \emph{acyclic}.

We will see that acyclic components always have a global maximum and a global minimum; what is more, they carry the structure of a distributive lattice. Furthermore, the state transition graph of a planar trinity is always acyclic (which is easy to see) and connected (which is much less obvious). This extends theorems of Kauffman \cite{kauffman} and of Gilmer and Litherland \cite{gl}, which make the same claims for the particular kind of planar trinity that arises from Kauffman's `universes.' Let us note that in \cite{kauffman} the connectedness result is a crucial step in proving the state expansion formula for the Alexander polynomial --- a theorem that became even more significant when it was used to show that knot Floer homology categorifies the Alexander polynomial.

By contrast, cyclic components (which may only occur in toric cases) cannot have local maxima or minima. We will in fact show that in a cyclic component every state is recurrent after performing exactly one clockwise move on each black triangle. Cyclic components do not form distributive lattices for obvious reasons, but one may define infinite cyclic covers of them which do.

The paper is organized as follows. In subsection \ref{sec:defs} we set our definitions and in \ref{sec:kauffman} we explain the original inspiration for our work. In subsection \ref{sec:decomp} we deal with the possibility of pasting (planar) trinities into others and in \ref{sec:lemmas} we start proving our results by collecting some observations that apply equally to the sphere and torus. Section \ref{sec:acyclic} establishes the distributive lattice structure of acyclic components, again so that it applies to both genera. Then we separate cases and in Sections \ref{sec:sphere} and \ref{sec:torus} we show our main results for genus $0$ and $1$, respectively.

\emph{Acknowledgments}: The research reported in this paper was carried out while the first author visited the Tokyo Institute of Technology under the Young Scientist Exchange Program.
The second author is supported by a Japan Society for the Promotion of Science (JSPS) Grant-in-Aid for Scientific Research C (no.\ 17K05244).

\section{Preliminaries}
\label{sec:prelim}

\subsection{Trinities, states, and transitions}
\label{sec:defs}

Let $\Sigma$ be a compact, oriented, closed surface of genus $g$. Let us fix a triangulation $\mathcal T$ of $\Sigma$ in which all vertices are colored red, green, or blue. We will denote the sets of vertices of the various colors by $R$ (for red) $E$ (for green) and $V$ (for blue)\footnote{The latter two, perhaps unusual choices were first made in \cite{hypertutte} motivated by hypergraph considerations. To make them easier to memorize, in that paper the color names emerald and violet were adopted. (`Red' and `region' conveniently share the same first letter.) We choose not to do so here but will keep the symbols.}. We call $\mathcal T$ a \emph{trinity} if vertices of the same color are never incident to the same edge. In trinities, edges may also be three-colored: between green and blue vertices we have red edges forming the bipartite graph $G_R$ and there is a similarly defined green graph $G_E$ and blue graph $G_V$. All three graphs are cellularly embedded in $\Sigma$ so that each region of $G_R$ contains a unique point of $R$ and so on. Furthermore, each of the three embedded graphs determines $\mathcal T$ up to isotopy.

The orientation of $\Sigma$ induces a two-coloring of the triangles of $\mathcal T$. Namely, we decide which triangles are black and which are white by the rule that in the black triangles, green comes after blue when going around the edges (or the vertices) in a clockwise direction, and the opposite is true in the white triangles. 

Since all black (as well as all white) triangles have a unique edge of each color, it is easy to see that the same natural number $n=n(\mathcal T)$ gives the size of the set of black triangles, white triangles, red edges, green edges, and blue edges. By using the cellulation $G_R$ of $\Sigma$, we find that
\[\chi(\Sigma)=2-2g=|E|+|V|-n+|R|.\]
Thus if $\Sigma$ is the torus, for any $\mathcal T$, the number $|E|+|V|+|R|$ of vertices and the number $n$ of white triangles agree. 

For other surfaces, of which we will only consider the sphere, this is not so. When $\Sigma$ is the sphere, we will single out one white triangle, to be called \emph{outer}, and its vertices that we call the \emph{roots}, so that non-outer white triangles and non-root vertices are equinumerous. We will use stereographic projection from a point in the outer white triangle to produce planar drawings. In fact, we will refer to spherical trinities with a fixed outer triangle as \emph{planar}.

\begin{Def}
Let $\mathcal T$ be a toric or planar trinity. 
A \emph{Tutte matching} or \emph{state} of $\mathcal T$ is a perfect matching between white triangles and incident vertices. (In planar cases we should add `non-outer' and `non-root,' but we will not always do so explicitly.)
\end{Def}

We will represent matches between triangles and vertices by arrows, as explained in the Introduction.

\begin{figure}[h]
\begin{tikzpicture}[scale=.5]

\path [fill=lightgray] (2,4) -- (3.5, 2.75) -- (4,1);
\path [fill=lightgray] (4,1) -- (4.5, 2.75) -- (6,4);
\path [fill=lightgray] (6,4) -- (4,3.5) -- (2,4);

\draw [ultra thick,red] (0,1) -- (2,4);
\draw [ultra thick,red] (2,4) -- (6,4);
\draw [ultra thick,red] (6,4) -- (8,1);
\draw [ultra thick,blue] (0,1) -- (4,1);
\draw [ultra thick,blue] (4,1) -- (6,4);
\draw [ultra thick,blue] (6,4) -- (4,7);
\draw [ultra thick,green] (4,7) -- (2,4);
\draw [ultra thick,green] (2,4) -- (4,1);
\draw [ultra thick,green] (4,1) -- (8,1);

\draw [ultra thick,green] (4,1) -- (4.5, 2.75);
\draw [ultra thick,green] (2,4) -- (4,3.5);
\draw [ultra thick,blue] (4,1) -- (3.5, 2.75);
\draw [ultra thick,blue] (6,4) -- (4,3.5);
\draw [ultra thick,red] (6,4) -- (4.5, 2.75);
\draw [ultra thick,red] (2,4) -- (3.5, 2.75);

\draw [->,ultra thick,blue,dashed] (6,6) to (4,5);
\draw [ultra thick,blue,dashed] (4,5) to (2,4);
\draw [->,ultra thick,green,dashed] (6,0) to (6,2);
\draw [ultra thick,green,dashed] (6,2) to (6,4);
\draw [->,ultra thick,red,dashed] (0,3) to (2,2);
\draw [ultra thick,red,dashed] (2,2) to (4,1);

\draw [fill=green,green] (0,1) circle [radius=0.2];
\draw [fill=green,green] (6,4) circle [radius=0.2];
\draw [fill=red,red] (4,1) circle [radius=0.2];
\draw [fill=red,red] (4,7) circle [radius=0.2];
\draw [fill=blue,blue] (2,4) circle [radius=0.2];
\draw [fill=blue,blue] (8,1) circle [radius=0.2];

\draw [fill=green,green] (3.5, 2.75) circle [radius=0.2];
\draw [fill=blue,blue] (4.5, 2.75) circle [radius=0.2];
\draw [fill=red,red] (4,3.5) circle [radius=0.2];

\begin{scope}[shift={(12,0)}]

\path [fill=lightgray] (2,4) -- (3.5, 2.75) -- (4,1);
\path [fill=lightgray] (4,1) -- (4.5, 2.75) -- (6,4);
\path [fill=lightgray] (6,4) -- (4,3.5) -- (2,4);

\draw [ultra thick,red] (0,1) -- (2,4);
\draw [ultra thick,red] (2,4) -- (6,4);
\draw [ultra thick,red] (6,4) -- (8,1);
\draw [ultra thick,blue] (0,1) -- (4,1);
\draw [ultra thick,blue] (4,1) -- (6,4);
\draw [ultra thick,blue] (6,4) -- (4,7);
\draw [ultra thick,green] (4,7) -- (2,4);
\draw [ultra thick,green] (2,4) -- (4,1);
\draw [ultra thick,green] (4,1) -- (8,1);

\draw [ultra thick,green] (4,1) -- (4.5, 2.75);
\draw [ultra thick,green] (2,4) -- (4,3.5);
\draw [ultra thick,blue] (4,1) -- (3.5, 2.75);
\draw [ultra thick,blue] (6,4) -- (4,3.5);
\draw [ultra thick,red] (6,4) -- (4.5, 2.75);
\draw [ultra thick,red] (2,4) -- (3.5, 2.75);

\draw [->,ultra thick,blue,dashed] (2,0) to (2,2);
\draw [ultra thick,blue,dashed] (2,2) to (2,4);
\draw [->,ultra thick,green,dashed] (2,6) to (4,5);
\draw [ultra thick,green,dashed] (4,5) to (6,4);
\draw [->,ultra thick,red,dashed] (8,3) to (6,2);
\draw [ultra thick,red,dashed] (6,2) to (4,1);

\draw [fill=green,green] (0,1) circle [radius=0.2];
\draw [fill=green,green] (6,4) circle [radius=0.2];
\draw [fill=red,red] (4,1) circle [radius=0.2];
\draw [fill=red,red] (4,7) circle [radius=0.2];
\draw [fill=blue,blue] (2,4) circle [radius=0.2];
\draw [fill=blue,blue] (8,1) circle [radius=0.2];

\draw [fill=green,green] (3.5, 2.75) circle [radius=0.2];
\draw [fill=blue,blue] (4.5, 2.75) circle [radius=0.2];
\draw [fill=red,red] (4,3.5) circle [radius=0.2];

\end{scope}

\draw [->,ultra thick,darkgray] (12,5) to (8,5);
\draw [->,ultra thick,darkgray] (8,4) to (12,4);
\node at (10,3.4) {\textbf{counter-clockwise}};
\node at (10,2.7) {\textbf{move}};
\node at (10,5.8) {\textbf{clockwise move}};

\end{tikzpicture}
\caption{The general case of the clock move.}
\label{fig:general_clock}
\end{figure}
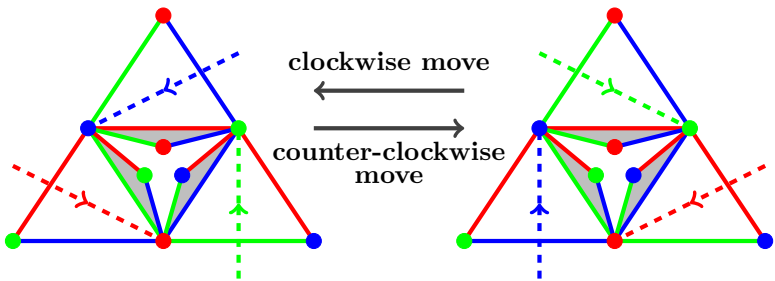

We shall now turn to the central idea of the paper, namely \emph{state transitions}. As we will explain shortly, this grows directly out of Kauffman's notion of a `state transposition.' Suppose that $\mathcal T$ contains three white triangles and three vertices arranged as shown in Figure \ref{fig:general_clock}, where we also assume that the three edges incident to our objects surround a disk (not containing the three white triangles) embedded in $\Sigma$. Now if in a given state the three triangles and three vertices are matched among themselves, then we may switch their assignments as shown in Figure \ref{fig:general_clock} and apply terminology (clockwise and counter-clockwise moves) as explained in the Introduction. 

\begin{rmk}
One may be tempted to consider longer similar chains of vertices and white triangles, of length some multiple of $3$. But a simple argument using Euler characteristic reveals that if these vertices and triangles are to be matched to each other, then for the corresponding cycle of edges to be separating on $\Sigma$, it is necessary that the cycle have length $3$ and even then the region not containing the white triangles can only be a disk. On the other hand, when $\Sigma$ has positive genus, non-separating chains of any length $3k$ are entirely possible. In 
the proof of Theorem \ref{thm:one_turn} 
we will see that this is essentially why acyclic components exist in toric cases. We save the investigation of clock moves about non-separating cycles for future work.
\end{rmk}

Of course, the simplest scenario regarding state transitions is when the three white triangles surround a single black one, which will then be called a (clockwise or counter-clockwise) \emph{empty black triangle} for reasons explained in the Introduction. In that case we will also refer to our operation as \emph{changing} 
the empty black triangle. When the region contains more than just one black triangle, we have a case of a nontrivial \emph{connected sum} --- see subsection \ref{sec:decomp} for more on those.

We define the \emph{state transition graph} of a trinity as in the Introduction.

\begin{ex}
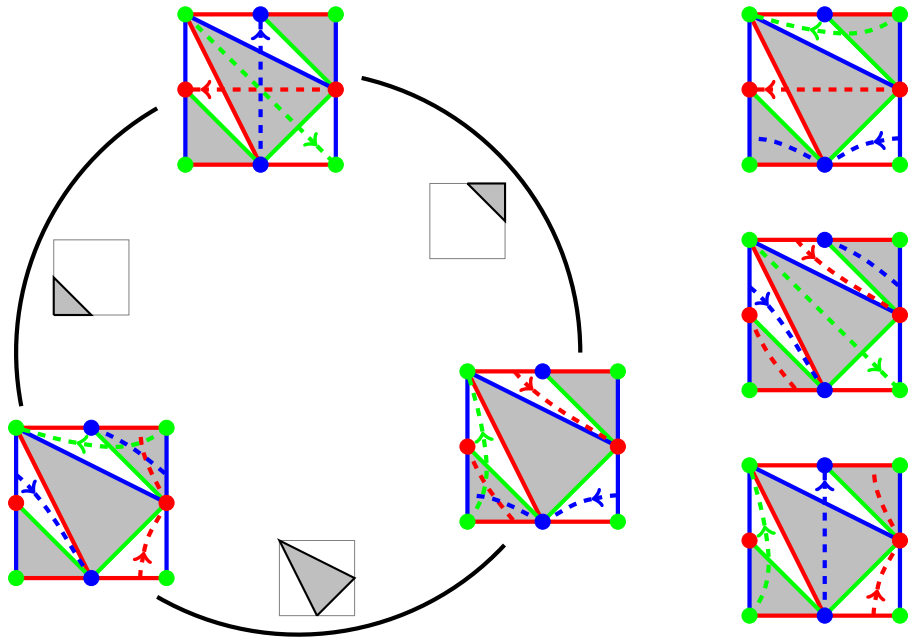
\begin{figure}[h]
\begin{tikzpicture} [scale=.25]

\draw [ultra thick] (15,0) arc [radius=15, start angle=0, end angle=77];
\draw [ultra thick] (-7.5,13) arc [radius=15, start angle=120, end angle=191];
\draw [ultra thick] (-7.5,-13) arc [radius=15, start angle=240, end angle=317];

\draw [help lines] (7,5) rectangle (11,9);
\draw [thick,fill=lightgray] (9,9) -- (11,9) -- (11,7) -- (9,9);
\draw [help lines] (-1,-14) rectangle (3,-10);
\draw [thick,fill=lightgray] (1,-14) -- (-1,-10) -- (3,-12) -- (1,-14);
\draw [help lines] (-13,2) rectangle (-9,6);
\draw [thick,fill=lightgray] (-13,2) -- (-11,2) -- (-13,4) -- (-13,2);

\begin{scope}[shift={(-6,10)}]

\path [fill=lightgray] (0,0) -- (4,0) -- (0,4);
\path [fill=lightgray] (4,0) -- (8,4) -- (0,8);
\path [fill=lightgray] (8,4) -- (8,8) -- (4,8);

\draw [ultra thick,red] (0,0) -- (8,0);
\draw [ultra thick,red] (0,8) -- (8,8);
\draw [ultra thick,red] (0,8) -- (4,0);
\draw [ultra thick,green] (0,4) -- (4,0) -- (8,4) -- (4,8);
\draw [ultra thick,blue] (0,0) -- (0,8);
\draw [ultra thick,blue] (8,0) -- (8,8);
\draw [ultra thick,blue] (0,8) -- (8,4);

\draw [fill=red,red] (0,4) circle [radius=0.4];
\draw [fill=red,red] (8,4) circle [radius=0.4];
\draw [fill=green,green] (0,0) circle [radius=0.4];
\draw [fill=green,green] (8,0) circle [radius=0.4];
\draw [fill=green,green] (0,8) circle [radius=0.4];
\draw [fill=green,green] (8,8) circle [radius=0.4];
\draw [fill=blue,blue] (4,0) circle [radius=0.4];
\draw [fill=blue,blue] (4,8) circle [radius=0.4];

\draw [->,ultra thick,red,dashed] (8,4) -- (.8,4);
\draw [ultra thick,red,dashed] (.8,4) -- (0,4);
\draw [->,ultra thick,green,dashed] (0,8) -- (7,1);
\draw [ultra thick,green,dashed] (7,1) -- (8,0);
\draw [->,ultra thick,blue,dashed] (4,0) -- (4,7.2);
\draw [ultra thick,blue,dashed] (4,7.2) -- (4,8);

\end{scope}

\begin{scope}[shift={(9,-9)}]

\path [fill=lightgray] (0,0) -- (4,0) -- (0,4);
\path [fill=lightgray] (4,0) -- (8,4) -- (0,8);
\path [fill=lightgray] (8,4) -- (8,8) -- (4,8);

\draw [ultra thick,red] (0,0) -- (8,0);
\draw [ultra thick,red] (0,8) -- (8,8);
\draw [ultra thick,red] (0,8) -- (4,0);
\draw [ultra thick,green] (0,4) -- (4,0) -- (8,4) -- (4,8);
\draw [ultra thick,blue] (0,0) -- (0,8);
\draw [ultra thick,blue] (8,0) -- (8,8);
\draw [ultra thick,blue] (0,8) -- (8,4);

\draw [fill=red,red] (0,4) circle [radius=0.4];
\draw [fill=red,red] (8,4) circle [radius=0.4];
\draw [fill=green,green] (0,0) circle [radius=0.4];
\draw [fill=green,green] (8,0) circle [radius=0.4];
\draw [fill=green,green] (0,8) circle [radius=0.4];
\draw [fill=green,green] (8,8) circle [radius=0.4];
\draw [fill=blue,blue] (4,0) circle [radius=0.4];
\draw [fill=blue,blue] (4,8) circle [radius=0.4];

\draw [ultra thick,red,dashed] (0,4) to [out=-70,in=130] (2.5,0);
\draw [->,ultra thick,red,dashed] (2.5,8) to [out=-50,in=140] (3.5,7);
\draw [ultra thick,red,dashed] (3.5,7) to [out=-40,in=145] (8,4);
\draw [->,ultra thick,green,dashed] (0,0) to [out=50,in=-80] (0.8,4.8);
\draw [ultra thick,green,dashed] (0.8,4.8) to [out=100,in=-75] (0,8);
\draw [ultra thick,blue,dashed] (4,0) to [out=155,in=0] (0,1.4);
\draw [->,ultra thick,blue,dashed] (8,1.4) to [out=180,in=10] (6.6,1.2);
\draw [ultra thick,blue,dashed] (6.6,1.2) to [out=190,in=25] (4,0);

\end{scope}

\begin{scope}[shift={(-15,-12)}]

\path [fill=lightgray] (0,0) -- (4,0) -- (0,4);
\path [fill=lightgray] (4,0) -- (8,4) -- (0,8);
\path [fill=lightgray] (8,4) -- (8,8) -- (4,8);

\draw [ultra thick,red] (0,0) -- (8,0);
\draw [ultra thick,red] (0,8) -- (8,8);
\draw [ultra thick,red] (0,8) -- (4,0);
\draw [ultra thick,green] (0,4) -- (4,0) -- (8,4) -- (4,8);
\draw [ultra thick,blue] (0,0) -- (0,8);
\draw [ultra thick,blue] (8,0) -- (8,8);
\draw [ultra thick,blue] (0,8) -- (8,4);

\draw [fill=red,red] (0,4) circle [radius=0.4];
\draw [fill=red,red] (8,4) circle [radius=0.4];
\draw [fill=green,green] (0,0) circle [radius=0.4];
\draw [fill=green,green] (8,0) circle [radius=0.4];
\draw [fill=green,green] (0,8) circle [radius=0.4];
\draw [fill=green,green] (8,8) circle [radius=0.4];
\draw [fill=blue,blue] (4,0) circle [radius=0.4];
\draw [fill=blue,blue] (4,8) circle [radius=0.4];

\draw [ultra thick,red,dashed] (8,4) to [out=115,in=-90] (6.6,8);
\draw [->,ultra thick,red,dashed] (6.6,0) to [out=90,in=-100] (6.8,1.4);
\draw [ultra thick,red,dashed] (6.8,1.4) to [out=80,in=-115] (8,4);
\draw [->,ultra thick,green,dashed] (8,8) to [out=220,in=-10] (3.2,7.2);
\draw [ultra thick,green,dashed] (3.2,7.2) to [out=170,in=-15] (0,8);
\draw [ultra thick,blue,dashed] (4,8) to [out=-20,in=140] (8,5.5);
\draw [->,ultra thick,blue,dashed] (0,5.5) to [out=-40,in=130] (1,4.5);
\draw [ultra thick,blue,dashed] (1,4.5) to [out=-50,in=125] (4,0);

\end{scope}

\begin{scope}[shift={(24,10)}]

\path [fill=lightgray] (0,0) -- (4,0) -- (0,4);
\path [fill=lightgray] (4,0) -- (8,4) -- (0,8);
\path [fill=lightgray] (8,4) -- (8,8) -- (4,8);

\draw [ultra thick,red] (0,0) -- (8,0);
\draw [ultra thick,red] (0,8) -- (8,8);
\draw [ultra thick,red] (0,8) -- (4,0);
\draw [ultra thick,green] (0,4) -- (4,0) -- (8,4) -- (4,8);
\draw [ultra thick,blue] (0,0) -- (0,8);
\draw [ultra thick,blue] (8,0) -- (8,8);
\draw [ultra thick,blue] (0,8) -- (8,4);

\draw [fill=red,red] (0,4) circle [radius=0.4];
\draw [fill=red,red] (8,4) circle [radius=0.4];
\draw [fill=green,green] (0,0) circle [radius=0.4];
\draw [fill=green,green] (8,0) circle [radius=0.4];
\draw [fill=green,green] (0,8) circle [radius=0.4];
\draw [fill=green,green] (8,8) circle [radius=0.4];
\draw [fill=blue,blue] (4,0) circle [radius=0.4];
\draw [fill=blue,blue] (4,8) circle [radius=0.4];

\draw [->,ultra thick,red,dashed] (8,4) -- (.8,4);
\draw [ultra thick,red,dashed] (.8,4) -- (0,4);
\draw [->,ultra thick,green,dashed] (8,8) to [out=220,in=-10] (3.2,7.2);
\draw [ultra thick,green,dashed] (3.2,7.2) to [out=170,in=-15] (0,8);
\draw [ultra thick,blue,dashed] (4,0) to [out=155,in=0] (0,1.4);
\draw [->,ultra thick,blue,dashed] (8,1.4) to [out=180,in=10] (6.6,1.2);
\draw [ultra thick,blue,dashed] (6.6,1.2) to [out=190,in=25] (4,0);

\end{scope}

\begin{scope}[shift={(24,-2)}]

\path [fill=lightgray] (0,0) -- (4,0) -- (0,4);
\path [fill=lightgray] (4,0) -- (8,4) -- (0,8);
\path [fill=lightgray] (8,4) -- (8,8) -- (4,8);

\draw [ultra thick,red] (0,0) -- (8,0);
\draw [ultra thick,red] (0,8) -- (8,8);
\draw [ultra thick,red] (0,8) -- (4,0);
\draw [ultra thick,green] (0,4) -- (4,0) -- (8,4) -- (4,8);
\draw [ultra thick,blue] (0,0) -- (0,8);
\draw [ultra thick,blue] (8,0) -- (8,8);
\draw [ultra thick,blue] (0,8) -- (8,4);

\draw [fill=red,red] (0,4) circle [radius=0.4];
\draw [fill=red,red] (8,4) circle [radius=0.4];
\draw [fill=green,green] (0,0) circle [radius=0.4];
\draw [fill=green,green] (8,0) circle [radius=0.4];
\draw [fill=green,green] (0,8) circle [radius=0.4];
\draw [fill=green,green] (8,8) circle [radius=0.4];
\draw [fill=blue,blue] (4,0) circle [radius=0.4];
\draw [fill=blue,blue] (4,8) circle [radius=0.4];

\draw [ultra thick,red,dashed] (0,4) to [out=-70,in=130] (2.5,0);
\draw [->,ultra thick,red,dashed] (2.5,8) to [out=-50,in=140] (3.5,7);
\draw [ultra thick,red,dashed] (3.5,7) to [out=-40,in=145] (8,4);
\draw [->,ultra thick,green,dashed] (0,8) -- (7,1);
\draw [ultra thick,green,dashed] (7,1) -- (8,0);
\draw [ultra thick,blue,dashed] (4,8) to [out=-20,in=140] (8,5.5);
\draw [->,ultra thick,blue,dashed] (0,5.5) to [out=-40,in=130] (1,4.5);
\draw [ultra thick,blue,dashed] (1,4.5) to [out=-50,in=125] (4,0);

\end{scope}

\begin{scope}[shift={(24,-14)}]

\path [fill=lightgray] (0,0) -- (4,0) -- (0,4);
\path [fill=lightgray] (4,0) -- (8,4) -- (0,8);
\path [fill=lightgray] (8,4) -- (8,8) -- (4,8);

\draw [ultra thick,red] (0,0) -- (8,0);
\draw [ultra thick,red] (0,8) -- (8,8);
\draw [ultra thick,red] (0,8) -- (4,0);
\draw [ultra thick,green] (0,4) -- (4,0) -- (8,4) -- (4,8);
\draw [ultra thick,blue] (0,0) -- (0,8);
\draw [ultra thick,blue] (8,0) -- (8,8);
\draw [ultra thick,blue] (0,8) -- (8,4);

\draw [fill=red,red] (0,4) circle [radius=0.4];
\draw [fill=red,red] (8,4) circle [radius=0.4];
\draw [fill=green,green] (0,0) circle [radius=0.4];
\draw [fill=green,green] (8,0) circle [radius=0.4];
\draw [fill=green,green] (0,8) circle [radius=0.4];
\draw [fill=green,green] (8,8) circle [radius=0.4];
\draw [fill=blue,blue] (4,0) circle [radius=0.4];
\draw [fill=blue,blue] (4,8) circle [radius=0.4];

\draw [ultra thick,red,dashed] (8,4) to [out=115,in=-90] (6.6,8);
\draw [->,ultra thick,red,dashed] (6.6,0) to [out=90,in=-100] (6.8,1.4);
\draw [ultra thick,red,dashed] (6.8,1.4) to [out=80,in=-115] (8,4);
\draw [->,ultra thick,green,dashed] (0,0) to [out=50,in=-80] (0.8,4.8);
\draw [ultra thick,green,dashed] (0.8,4.8) to [out=100,in=-75] (0,8);
\draw [->,ultra thick,blue,dashed] (4,0) -- (4,7.2);
\draw [ultra thick,blue,dashed] (4,7.2) -- (4,8);

\end{scope}

\end{tikzpicture}
\caption{The state transition graph of the smallest toric trinity.}
\label{fig:small}
\end{figure}

The smallest trinity on the torus, of $n=3$ vertices and white triangles, has six Tutte matchings. In Figure \ref{fig:small} we show the trinity using the standard identifications along the boundary of a square. Three of the six states have no empty black triangles, i.e., they are isolated points of the state transition graph. These are shown on the right of Figure \ref{fig:small}. The other three form a cycle. We labelled each edge of the cycle with the triangle that is being changed.
\end{ex}

\subsection{From Kauffman states to Tutte matchings}
\label{sec:kauffman}

Let us briefly explain the origin of our setup. None of our later arguments build on this subsection.

Let us consider a generic immersed curve $\gamma\subset\Sigma$ so that each of its complementary regions is a disk. (If $\Sigma$ is a sphere, this just amounts to saying that $\gamma$ is nonempty and connected.) Such curves give rise to a special class of trinities as shown in Figure \ref{fig:kauffman}. Namely, fix a checkerboard coloring of $\Sigma\setminus\gamma$ with the colors red and blue and place a vertex of like color in each region. Put green vertices to the double points of $\gamma$. Edges are defined in the obvious way: green vertices are connected to the surrounding region markers, and those in turn are joined by an edge for each segment of $\gamma$ incident to the two corresponding regions. Notice that in the resulting trinity each green vertex is incident to exactly four edges, something that we do not require in general trinities.

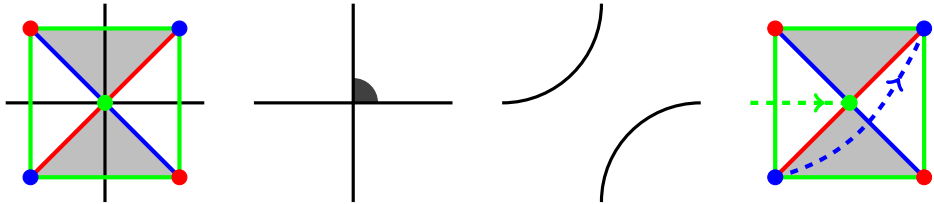
\begin{figure}[h]
\begin{tikzpicture}[scale=.33]
\path [fill=lightgray] (-3,3) -- (0,0) -- (3,3) -- (-3,3);
\path [fill=lightgray] (-3,-3) -- (0,0) -- (3,-3) -- (-3,-3);
\draw [very thick] (-4,0) -- (4,0);
\draw [very thick] (0,-4) -- (0,4);
\draw [ultra thick,red] (-3,-3) -- (3,3);
\draw [ultra thick,blue] (-3,3) -- (3,-3);
\draw [ultra thick,green] (-3,-3) -- (-3,3) -- (3,3) -- (3,-3) -- (-3,-3);
\draw [fill=green,green] (0,0) circle [radius=0.3];
\draw [fill=red,red] (-3,3) circle [radius=0.3];
\draw [fill=red,red] (3,-3) circle [radius=0.3];
\draw [fill=blue,blue] (-3,-3) circle [radius=0.3];
\draw [fill=blue,blue] (3,3) circle [radius=0.3];

\path [fill=darkgray] (10,1) -- (10,0) -- (11,0) to [out=90,in=0] (10,1);
\draw [very thick] (6,0) -- (14,0);
\draw [very thick] (10,-4) -- (10,4);

\draw [very thick] (24,0) arc [radius=4,start angle=90,end angle=180];
\draw [very thick] (16,0) arc [radius=4,start angle=-90,end angle=0];

\path [fill=lightgray] (27,3) -- (30,0) -- (33,3) -- (27,3);
\path [fill=lightgray] (27,-3) -- (30,0) -- (33,-3) -- (27,-3);
\draw [ultra thick,red] (27,-3) -- (33,3);
\draw [ultra thick,blue] (27,3) -- (33,-3);
\draw [ultra thick,green] (27,-3) -- (27,3) -- (33,3) -- (33,-3) -- (27,-3);
\draw [fill=green,green] (30,0) circle [radius=0.3];
\draw [fill=red,red] (27,3) circle [radius=0.3];
\draw [fill=red,red] (33,-3) circle [radius=0.3];
\draw [fill=blue,blue] (27,-3) circle [radius=0.3];
\draw [fill=blue,blue] (33,3) circle [radius=0.3];
\draw [->,ultra thick,blue,dashed] (27,-3) to [out=15,in=240] (32,1);
\draw [ultra thick,blue,dashed] (32,1) to [out=60,in=250] (33,3);
\draw [->,ultra thick,green,dashed] (26,0) -- (29,0);
\draw [ultra thick,green,dashed] (29,0) -- (30,0);
\end{tikzpicture}
\caption{From left to right: a crossing of $\gamma$ and the construction of the associated trinity; a Kauffman state marker; the local portion of the Euler-Jordan trail; the matching of the two nearby white triangles.}
\label{fig:kauffman}
\end{figure}

A \emph{Kauffman state} \cite{kauffman} is a matching between double points of $\gamma$ and incident regions. 
Each vertex-region pair is represented by marking, as in Figure \ref{fig:kauffman}, the quadrant at the vertex that the region contains. 
(To be more exact, Kauffman states are defined for spherical curves with two adjacent (`starred') regions removed from consideration.)

To each Kauffman state we associate a Tutte matching as follows. If a white triangle does not intersect any marked quadrants, we match it with its green vertex. Otherwise we match it with the red or blue region that contains the marked quadrant. (One of the white triangles straddling the starred regions (in most cases this is unique) is declared the outer triangle and is not matched.)

This is the `obvious choice' when we consider how to find triangles to match to red and blue vertices, but perhaps our construction is most natural in light of the Euler-Jordan trails that Kauffman associates to his states. See Figure \ref{fig:kauffman}. The trail results from splitting each crossing along its marker and the idea is that such a trail separates two spanning trees, in the red and blue Tait graphs\footnote{In these graphs red (resp.\ blue) region markers are joined by edges representing adjacency at crossings. They are dual to each other. In knot theory, they are commonly called the black and white graphs of a knot diagram.},
respectively. Indeed, when we represent the matchings involving red and blue vertices with arrows, as explained in the Introduction, then those arrows connect two vertices that are \emph{not} separated by the Euler-Jordan trail. Hence the unions of our red and blue arrows will be exactly the two spanning trees. Moreover, as the blue and the red starred regions are not matched, they have no arrow pointing to them; other regions have exactly one such arrow. That is, in our version the two spanning trees appear as (spanning) \emph{arborescences} rooted at the starred regions. See Section \ref{sec:sphere} for how this idea develops further.

Now the key point is that if we perform a \emph{state transposition} as in Figure \ref{fig:classic_clock} (top), the arrow configuration about the central black triangle (see bottom) changes as in Figure \ref{fig:clockmove}. In the case when the segment of $\gamma$ connecting the two crossings involved in the transposition intersects other parts of $\gamma$ (cf.\ \cite[Figure 5]{kauffman}), the change in the associated Tutte matchings will be of the more general type depicted in Figure \ref{fig:general_clock}. Notice that a clockwise motion of the Kauffman state markers results in a counter-clockwise turn of the three arrows (about their respective heads) involved in the corresponding clock move. (This makes it challenging to choose terminology that is both internally consistent and consistent with usage in \cite{kauffman,gl,ct}.)

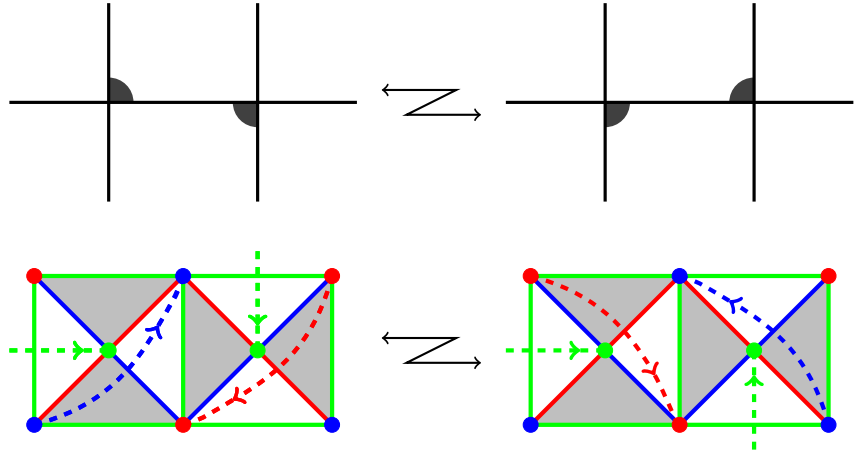
\begin{figure}[h]
\begin{tikzpicture}[scale=.33]
\path [fill=darkgray] (4,11) -- (4,10) -- (5,10) to [out=90,in=0] (4,11);
\path [fill=darkgray] (9,10) -- (10,10) -- (10,9) to [out=180,in=270] (9,10);
\draw [very thick] (0,10) -- (14,10);
\draw [very thick] (4,6) -- (4,14);
\draw [very thick] (10,6) -- (10,14);

\draw [<->,thick] (15,10.5) -- (18,10.5) -- (16,9.5) -- (19,9.5);

\path [fill=darkgray] (24,9) -- (24,10) -- (25,10) to [out=270,in=0] (24,9);
\path [fill=darkgray] (29,10) -- (30,10) -- (30,11) to [out=180,in=90] (29,10);
\draw [very thick] (20,10) -- (34,10);
\draw [very thick] (24,6) -- (24,14);
\draw [very thick] (30,6) -- (30,14);

\path [fill=lightgray] (1,3) -- (4,0) -- (7,3) -- (1,3);
\path [fill=lightgray] (1,-3) -- (4,0) -- (7,-3) -- (1,-3);
\path [fill=lightgray] (7,3) -- (10,0) -- (7,-3) -- (7,3);
\path [fill=lightgray] (13,3) -- (10,0) -- (13,-3) -- (13,3);
\draw [ultra thick,red] (1,-3) -- (7,3) -- (13,-3);
\draw [ultra thick,blue] (1,3) -- (7,-3) -- (13,3);
\draw [ultra thick,green] (1,-3) -- (1,3) -- (7,3) -- (7,-3) -- (1,-3);
\draw [ultra thick,green] (7,3) -- (13,3) -- (13,-3) -- (7,-3);
\draw [->,ultra thick,blue,dashed] (1,-3) to [out=15,in=240] (6,1);
\draw [ultra thick,blue,dashed] (6,1) to [out=60,in=250] (7,3);
\draw [->,ultra thick,red,dashed] (13,3) to [out=255,in=30] (9,-2);
\draw [ultra thick,red,dashed] (9,-2) to [out=210,in=15] (7,-3);
\draw [->,ultra thick,green,dashed] (0,0) -- (3,0);
\draw [ultra thick,green,dashed] (3,0) -- (4,0);
\draw [->,ultra thick,green,dashed] (10,4) -- (10,1);
\draw [ultra thick,green,dashed] (10,1) -- (10,0);
\draw [->,ultra thick,blue,dashed] (1,-3) to [out=15,in=240] (6,1);
\draw [ultra thick,blue,dashed] (6,1) to [out=60,in=255] (7,3);
\draw [->,ultra thick,red,dashed] (13,3) to [out=255,in=30] (9,-2);
\draw [ultra thick,red,dashed] (9,-2) to [out=210,in=15] (7,-3);
\draw [->,ultra thick,green,dashed] (0,0) -- (3,0);
\draw [ultra thick,green,dashed] (3,0) -- (4,0);
\draw [->,ultra thick,green,dashed] (10,4) -- (10,1);
\draw [ultra thick,green,dashed] (10,1) -- (10,0);
\draw [fill=green,green] (4,0) circle [radius=0.3];
\draw [fill=green,green] (10,0) circle [radius=0.3];
\draw [fill=red,red] (1,3) circle [radius=0.3];
\draw [fill=red,red] (7,-3) circle [radius=0.3];
\draw [fill=red,red] (13,3) circle [radius=0.3];
\draw [fill=blue,blue] (1,-3) circle [radius=0.3];
\draw [fill=blue,blue] (7,3) circle [radius=0.3];
\draw [fill=blue,blue] (13,-3) circle [radius=0.3];

\draw [<->,thick] (15,.5) -- (18,.5) -- (16,-.5) -- (19,-.5);

\path [fill=lightgray] (21,3) -- (24,0) -- (27,3) -- (21,3);
\path [fill=lightgray] (21,-3) -- (24,0) -- (27,-3) -- (21,-3);
\path [fill=lightgray] (27,3) -- (30,0) -- (27,-3) -- (27,3);
\path [fill=lightgray] (33,3) -- (30,0) -- (33,-3) -- (33,3);
\draw [ultra thick,red] (21,-3) -- (27,3) -- (33,-3);
\draw [ultra thick,blue] (21,3) -- (27,-3) -- (33,3);
\draw [ultra thick,green] (21,-3) -- (21,3) -- (27,3) -- (27,-3) -- (21,-3);
\draw [ultra thick,green] (27,3) -- (33,3) -- (33,-3) -- (27,-3);
\draw [->,ultra thick,red,dashed] (21,3) to [out=-15,in=120] (26,-1);
\draw [ultra thick,red,dashed] (26,-1) to [out=300,in=285] (27,-3);
\draw [->,ultra thick,blue,dashed] (33,-3) to [out=105,in=-30] (29,2);
\draw [ultra thick,blue,dashed] (29,2) to [out=150,in=-15] (27,3);
\draw [->,ultra thick,green,dashed] (20,0) -- (23,0);
\draw [ultra thick,green,dashed] (23,0) -- (24,0);
\draw [->,ultra thick,green,dashed] (30,-4) -- (30,-1);
\draw [ultra thick,green,dashed] (30,-1) -- (30,0);
\draw [fill=green,green] (24,0) circle [radius=0.3];
\draw [fill=green,green] (30,0) circle [radius=0.3];
\draw [fill=red,red] (21,3) circle [radius=0.3];
\draw [fill=red,red] (27,-3) circle [radius=0.3];
\draw [fill=red,red] (33,3) circle [radius=0.3];
\draw [fill=blue,blue] (21,-3) circle [radius=0.3];
\draw [fill=blue,blue] (27,3) circle [radius=0.3];
\draw [fill=blue,blue] (33,-3) circle [radius=0.3];
\end{tikzpicture}
\caption{A transposition of Kauffman states and the corresponding clock move of Tutte matchings.}
\label{fig:classic_clock}
\end{figure}

Thus planar trinities and their clock moves generalize Kauffman's universes and his state transpositions. It is most useful to think of this generalization as a replacement of the two Tait \emph{graphs}
with a pair of planar dual \emph{hypergraphs} \cite[Section 8]{hypertutte}: The hyperedges are the green points and indeed, in general they can have more than two blue (and hence red) neighbors. The point of view taken by Cohen and Teicher in \cite{ct} can be seen as a significant step toward this generalization.

One neat consequence of our reinterpretation of Kauffman's ideas is that it applies equally well, or maybe even better, to the torus. See Section \ref{sec:torus}. It remains to be seen if the study of states and state transitions leads to new insights when applied to knot projections on the torus, such as grid diagrams.

\subsection{Decomposing trinities}
\label{sec:decomp}

Next, we explore the following basic idea. Given a spherical trinity $\mathcal T_0$ with an outer white triangle and another trinity $\mathcal T$ on any surface, we may choose an arbitrary black triangle of $\mathcal T$ and replace it with $\mathcal T_0$ minus its outer triangle to form the new trinity $\mathcal T\#\mathcal T_0$. (There is a unique way to match colors along the common triangular boundary.) Let us call this the \emph{connected sum} of $\mathcal T$ and $\mathcal T_0$. 
Now if $\mathcal T$ is on the sphere (plane) or on the torus, so that we can talk about state transition graphs, we observe the following.

\begin{thm}
\label{thm:graph_is_multiplicative}
The state transition graph of $\mathcal T\#\mathcal T_0$ is the product of the state transition graphs of $\mathcal T$ and $\mathcal T_0$, respectively. That is, the set of states for $\mathcal T\#\mathcal T_0$ can be identified with the set of pairs $(s,s')$, where $s$ is a state for $\mathcal T$ and $s'$ is a state for $\mathcal T_0$; furthermore, there is a clockwise move from $(s,s')$ to $(t,t')$ if and only if there is a clockwise move from $s$ to $t$ in $\mathcal T$ or there is one from $s'$ to $t'$ in $\mathcal T_0$. 
\end{thm}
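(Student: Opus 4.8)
The plan is to prove the two assertions of Theorem \ref{thm:graph_is_multiplicative}---the identification of state sets and the correspondence of clock moves---in turn, in both cases using that the gluing circle separates the combinatorics of $\mathcal T\#\mathcal T_0$ into a \emph{$\mathcal T$-part} and a \emph{$\mathcal T_0$-part}. Write $O$ for the outer white triangle of $\mathcal T_0$ and $D_0=\mathcal T_0\setminus O$ for the copy glued in place of the chosen black triangle $\Delta$, so that the three roots of $\mathcal T_0$ are identified with the vertices of $\Delta$. Then the white triangles of $\mathcal T\#\mathcal T_0$ are the $n(\mathcal T)$ white triangles of $\mathcal T$ together with the $n(\mathcal T_0)-1$ non-outer white triangles of $\mathcal T_0$, and its vertices are those of $\mathcal T$ together with the $n(\mathcal T_0)-1$ non-root vertices of $\mathcal T_0$.

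First I would establish the bijection on states by a counting argument. The two incidence observations are that a $\mathcal T_0$-triangle meets only non-root vertices of $\mathcal T_0$ and the three roots, while a non-root vertex of $\mathcal T_0$ lies in the interior of $D_0$ and hence is incident only to $\mathcal T_0$-triangles. Since the $n(\mathcal T_0)-1$ non-root vertices can be matched only by the equally numerous $\mathcal T_0$-triangles, in any state every $\mathcal T_0$-triangle is \emph{forced} to match a non-root vertex of $\mathcal T_0$; in particular no $\mathcal T_0$-triangle matches a root. Consequently a state of $\mathcal T\#\mathcal T_0$ restricts to a state $s'$ of $\mathcal T_0$ (non-outer white triangles matched to non-root vertices) together with a state $s$ of $\mathcal T$ on the remaining, purely $\mathcal T$-type, triangles and vertices; conversely any pair $(s,s')$ reassembles into a state. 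This yields the claimed identification.

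For the clock moves the crucial point is a \emph{locality lemma}: the three white triangles of any clock move are either all of $\mathcal T$-type or all of $\mathcal T_0$-type. Indeed, two cyclically consecutive white triangles of a clock move share the cycle vertex between them, one matching it before the move and the other after; as both configurations are states, the previous paragraph shows that were the two triangles of different types, the shared vertex would have to be simultaneously a $\mathcal T$-vertex and a non-root vertex of $\mathcal T_0$---impossible. Granting the lemma, each clock move alters exactly one of the coordinates $s,s'$, which is precisely the product structure of state transition graphs asserted in the theorem; and since the local picture of Figure \ref{fig:general_clock} is unchanged, clockwise moves match clockwise moves.

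It remains to match the moves of each factor with those of the connected sum, and here I expect the main subtlety. A $\mathcal T$-type move is carried by a separating $3$-cycle $C$ of $\mathcal T$-edges, and the disk it bounds either avoids the interior of $D_0$ or engulfs it; in the latter case one recovers a disk in $\mathcal T$ by collapsing $D_0$ back to $\Delta$ (the extreme case $C=\partial\Delta=\partial D_0$ being exactly the move that changes $\Delta$ in $\mathcal T$). The delicate direction is the $\mathcal T_0$-type moves: I must transfer their disks correctly, and this works \emph{precisely because $\mathcal T_0$ is planar}. Its clock moves are performed in the plane with $O$ at infinity, so their disks are bounded, never contain $O$, sit inside $D_0$, and transfer verbatim. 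Were one to illegitimately allow a clock move of $\mathcal T_0$ whose disk swallowed $O$, it would have no counterpart in $\mathcal T\#\mathcal T_0$ when $\mathcal T$ has positive genus, the corresponding region then being a copy of $\mathcal T$ with a disk removed rather than a disk; excluding this via the planarity convention is the heart of the argument. With the disks transferred in both directions, the correspondence of clock moves becomes an orientation-preserving bijection, completing the proof.
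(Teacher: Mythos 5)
Your proposal is correct, and its core follows the same route as the paper: the state bijection comes from the observation that the non-root vertices of $\mathcal T_0$ are incident only to non-outer white triangles of $\mathcal T_0$ and that these two sets are equinumerous, and the locality of clock moves comes from the fact that a $\mathcal T_0$-triangle can never be matched to a root (the paper phrases this via the root that the vertex cycle would have to pass through; you phrase it via the shared vertex of two cyclically consecutive triangles --- both work). The one step you assert rather than prove is that the disk of a clock move of the planar trinity $\mathcal T_0$ never contains the outer triangle $O$: this does not follow merely from drawing $O$ as the unbounded region, since the definition of a clock move on the sphere a priori allows either complementary disk of the $3$-cycle to be the one avoiding the three white triangles. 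The claim is nevertheless true: an Euler-characteristic count shows that a disk $D$ bounded by a $3$-cycle of edges whose inward-adjacent triangles are all black contains equally many interior vertices and white triangles, and once the three boundary vertices are matched to the move's triangles outside $D$, these interior vertices and white triangles must be matched bijectively to one another, so the unmatched triangle $O$ (together with its three unmatched roots) cannot lie in $D$. The paper's own proof is silent on this disk-transfer point, so modulo supplying that count your treatment is actually the more complete one.
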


\begin{proof}
Consider the trinities $\mathcal T$ and $\mathcal T_0$ which form the connected sum $\mathcal T\#\mathcal T_0$ by gluing $\mathcal T_0$ into the black triangle $\Delta$ of $\mathcal T$, as described at the beginning of this subsection. 
Notice that even after gluing, the non-root vertices of $\mathcal T_0$ are only adjacent to non-outer white triangles of $\mathcal T_0$ and not to any white triangles of $\mathcal T$. Since the non-root vertices and the non-outer white triangles of $\mathcal T_0$ are equinumerous, we have that in any state of the connected sum $\mathcal T\#\mathcal T_0$, those two sets are matched to each other, forming a state $s'$ of $\mathcal T_0$. All other vertices and white triangles of the connected sum come from $\mathcal T$ and are matched to each other, resulting in a state $s$ of $\mathcal T$. The state transitions of the connected sum correspond to state transitions of $\mathcal T$ (including moves about $\Delta$) or $\mathcal T_0$. This is because if both a non-root vertex of $\mathcal T_0$ and a vertex of $\mathcal T$ were involved in a move, then one of the roots (call it $x$) would have to be too, furthermore one of the three white triangles of the move would have to be from $\mathcal T_0$ and adjacent to $x$. But as we have just seen, such triangles can not be matched to $x$ neither before nor after the move. So all three vertices are from `the same side' and hence so are the three triangles, again because they have to be matched to those vertices before the move.
\end{proof}

We say that the connected sum $\mathcal T\#\mathcal T_0$ is \emph{trivial} if it involves, either as $\mathcal T$ or as $\mathcal T_0$, the spherical trinity $\mathcal F$ with just one black and one white triangle. We call a trinity \emph{irreducible} if it is different from $\mathcal F$ and can not be written as a nontrivial connected sum. Connected sums can be iterated by gluing furher planar trinities into other black triangles, including black triangles that belong to previously glued-in parts. For any trinity $\mathcal T\ne\mathcal F$ on any surface, there is a well-defined irreducible core so that $\mathcal T$ is obtained by gluing a `tree' of planar trinities to the core. This is because the triangles, along which our trinities are glued, can only form trivial intersections.

With Theorem \ref{thm:graph_is_multiplicative} in hand, for most of the rest of the paper we will assume that our trinity is irreducible. That assumption implies that all clock moves take place about empty black triangles as in Figure \ref{fig:clockmove} and not as in Figure \ref{fig:general_clock}. For connected sums, all we have to say follows more or less immediately from Theorem \ref{thm:graph_is_multiplicative} and our other claims.

\subsection{Basic observations}
\label{sec:lemmas}

We now start building the theory of general (irreducible) trinities, states, and clock moves. Let us a fix a trinity $\mathcal T$ on the surface $\Sigma$ (of genus $0$ or $1$).


In any state of $\mathcal T$ and for any vertex, there can be at most two empty black triangles adjacent to the vertex. (Namely the ones on either side of the white triangle that is matched to the vertex.) When there are two such triangles, they are oppositely oriented. In particular, 

\begin{lemma}
\label{lem:triangles_are_far}
Clockwise (counter-clockwise) empty black triangles may not share vertices.
\end{lemma}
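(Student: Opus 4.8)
The plan is to prove the contrapositive structure directly: suppose two empty black triangles, both clockwise (the counter-clockwise case is identical by symmetry), share a vertex $v$, and derive a contradiction from the matching constraints. The key observation, already flagged in the paragraph preceding the statement, is that around any fixed vertex $v$ the state matches $v$ to exactly one incident white triangle $W$, and the only black triangles adjacent to $v$ that can be empty are the two immediately flanking $W$ in the cyclic order of triangles around $v$. So first I would establish carefully that an empty black triangle adjacent to $v$ must be one of these two flanking triangles: if an empty black triangle $\Delta$ adjacent to $v$ were \emph{not} adjacent to $W$, then $v$ would be one of the three vertices of $\Delta$ and, since $\Delta$ is empty, $v$ must be matched (by the local rule for empty black triangles) to one of the three white triangles surrounding $\Delta$ --- but that matched triangle is $W$, forcing $\Delta$ to be adjacent to $W$ after all.

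Next I would analyze the orientations of the two flanking triangles. Let $W$ be the white triangle matched to $v$, and let $\Delta_1, \Delta_2$ be the two black triangles sharing an edge with $W$ at $v$ (one on each side of $W$ in the cyclic order around $v$). The heart of the argument is that these two triangles receive \emph{opposite} orientations as empty triangles. This is where I expect the main work to lie: I would track the direction of the arrow representing the match $v \leftrightarrow W$ and translate it into the clockwise/counter-clockwise designation of each flanking empty triangle. Because $\Delta_1$ and $\Delta_2$ lie on opposite sides of the matched white triangle $W$ relative to $v$, the single arrow into $v$ occupies the `filled' slot on one side for $\Delta_1$ and the opposite side for $\Delta_2$; completing each of $\Delta_1,\Delta_2$ to a consistent empty configuration then forces one to be clockwise and the other counter-clockwise. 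I would make this precise by appealing directly to the local picture in Figure \ref{fig:clockmove}, reading off how the orientation of an empty black triangle is determined by which of its three flanking white triangles is matched to which vertex.

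With both pieces in place the lemma follows immediately: any empty black triangle adjacent to $v$ is one of $\Delta_1,\Delta_2$, and these are oppositely oriented, so two empty black triangles of the \emph{same} orientation cannot both be adjacent to $v$. The main obstacle is purely the bookkeeping of orientations in the second step --- keeping the cyclic order around $v$, the induced black/white coloring, and the arrow convention all consistent so that `opposite sides of $W$' genuinely yields `opposite orientations.' I would guard against an off-by-one error here by verifying the claim against the explicit clockwise and counter-clockwise triangles drawn in Figure \ref{fig:clockmove}, where the matched white triangle and its two flanking black triangles are visible, rather than re-deriving the coloring rule from scratch. Everything else is a direct consequence of the definition of a state as a perfect matching and the already-noted fact that an empty black triangle forces its three vertices to be matched to its three surrounding white triangles.
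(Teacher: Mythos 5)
Your proposal is correct and follows essentially the same route as the paper, which derives the lemma from exactly the two observations you make: any empty black triangle at a vertex $v$ must be one of the two flanking the white triangle matched to $v$, and when both of those are empty they are oppositely oriented. The paper states this tersely in the paragraph preceding the lemma (with the lemma following "in particular"), while you supply the bookkeeping details; there is no substantive difference.
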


A clockwise move on the triangle $\Delta$ can eliminate a(n empty) counter-clockwise triangle, but only if it shares a vertex with $\Delta$. By the previous paragraph, there can be at most three such counter-clockwise triangles.

\begin{lemma}
\label{lem:moves_are_independent}
For any pair of clockwise (counter-clockwise) empty black triangles in a given state, changing one to counter-clockwise (clockwise) leaves the other one still empty. Furthermore, the same state results (after two steps) regardless of the order in which we change the two triangles.
\end{lemma}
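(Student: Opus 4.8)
The plan is to show that the two clock moves have disjoint \emph{supports} in the matching, so that neither interferes with the other. Call the two clockwise empty black triangles $\Delta_1$ and $\Delta_2$, and let $s$ be the given state. Performing a clockwise move on $\Delta_i$ alters the matching only on the three vertices of $\Delta_i$ together with the three white triangles adjacent to $\Delta_i$ (it cyclically re-pairs them); every other arrow is untouched. Whether $\Delta_j$ (for $j\neq i$) is empty, and if so with which orientation, is determined entirely by the matches assigned to the three white triangles surrounding $\Delta_j$. So both assertions will follow once I verify that $\Delta_1$ and $\Delta_2$ share neither a vertex nor an adjacent white triangle.

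Disjointness of vertices is immediate from Lemma \ref{lem:triangles_are_far}, since $\Delta_1$ and $\Delta_2$ are both clockwise. The key step is to deduce disjointness of the adjacent white triangles from this. Suppose a white triangle $W$ were adjacent to both $\Delta_1$ and $\Delta_2$. Since each edge of the triangulation borders exactly one white and one black triangle, $W$ must meet $\Delta_1$ along one edge $e_1$ and $\Delta_2$ along a different edge $e_2$, with $e_1\neq e_2$. But $e_1$ and $e_2$ are two distinct edges of the same triangle $W$, so they share an endpoint $x$; and $x$, lying on both $e_1$ and $e_2$, would be a common vertex of $\Delta_1$ and $\Delta_2$, contradicting Lemma \ref{lem:triangles_are_far}. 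Hence no white triangle is adjacent to both.

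With disjointness established, the first claim is clear: a clockwise move about $\Delta_1$ leaves the matches of all three white triangles around $\Delta_2$ unchanged, so the local configuration at $\Delta_2$ is unaffected and $\Delta_2$ remains empty (indeed still clockwise), and symmetrically for a move about $\Delta_2$. For the second claim, the move about $\Delta_1$ re-pairs the vertices and adjacent white triangles of $\Delta_1$, while the move about $\Delta_2$ re-pairs those of $\Delta_2$; since these two collections are disjoint, as modifications of the matching the two operations have disjoint supports and therefore commute, so the same state results regardless of order. The argument is essentially a locality statement and I do not anticipate a genuine obstacle; the one point requiring care is precisely the passage from ``no shared vertex'' to ``no shared adjacent white triangle,'' which is where Lemma \ref{lem:triangles_are_far} does the real work.
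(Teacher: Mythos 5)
Your proof is correct and follows the same route as the paper: both arguments reduce the lemma to Lemma \ref{lem:triangles_are_far} via the observation that a white triangle adjacent to two black triangles would force those black triangles to share a vertex, whence the two moves have disjoint supports and commute. Your write-up merely spells out in more detail what the paper's one-line proof leaves implicit.
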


\begin{proof}
Both claims are immediate from Lemma \ref{lem:triangles_are_far} after noting that if a white triangle shared edges with two black triangles, then those black triangles would have a common vertex as well.
\end{proof}


By the same token (that is, a trivial induction proof), we also have the following.

\begin{lemma}
For any set of clockwise (counter-clockwise) empty black triangles in a state, changing all of them to counter-clockwise (clockwise) will result in the same state regardless of the order in which they are changed.
\end{lemma}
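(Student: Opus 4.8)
The plan is to prove this by induction on the number $k$ of clockwise (respectively counter-clockwise) empty black triangles being changed, with the previous Lemma~\ref{lem:moves_are_independent} supplying both the base case and the inductive mechanism. The key structural fact, which I would establish first, is that the entire collection of these triangles remains simultaneously changeable throughout the process: since no two clockwise empty black triangles share a vertex (Lemma~\ref{lem:triangles_are_far}), and changing one triangle only alters matchings at \emph{its own} three vertices, changing any one of them cannot disturb the emptiness or orientation of any of the others. Thus at every intermediate stage, all the not-yet-changed triangles are still clockwise and empty, so the process is well-defined no matter the order.

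First I would set up the induction. The statement for $k \le 2$ is exactly the content of Lemma~\ref{lem:moves_are_independent} (the case $k=1$ being trivial, the case $k=2$ being the order-independence there). For the inductive step, suppose the claim holds for any set of $k$ triangles and consider a set $\{\Delta_1,\dots,\Delta_{k+1}\}$ together with two orderings $\sigma$ and $\tau$ of changing them. I would argue that any two orderings can be connected by a sequence of transpositions of \emph{adjacent} steps, and that each such adjacent transposition leaves the final state unchanged. The latter is precisely Lemma~\ref{lem:moves_are_independent} applied to the two triangles being swapped, once we know (from the structural fact above) that both are still clockwise and empty at the moment they are changed in either order.

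The cleanest way to package this is to observe that it suffices to show the result of changing all $k+1$ triangles is independent of which triangle is changed \emph{last}. Given an arbitrary ordering, by the inductive hypothesis applied to the first $k$ triangles (in either order) we may freely rearrange everything except the final triangle; then any two orderings are seen to agree by comparing their common `change $\Delta_i$ last' normal form, with distinct choices of last triangle reconciled via a single application of the two-triangle case. Concretely, if one ordering ends with $\Delta_i$ and another with $\Delta_j$, I would use the inductive hypothesis to bring both into the form `change the other $k-1$ triangles first, in a fixed order, then $\Delta_i$ then $\Delta_j$' versus `$\dots$ then $\Delta_j$ then $\Delta_i$', and invoke Lemma~\ref{lem:moves_are_independent} on the pair $\{\Delta_i,\Delta_j\}$.

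The main obstacle is not any single deep fact but rather bookkeeping: one must be careful that the hypotheses of Lemma~\ref{lem:moves_are_independent} genuinely hold at each intermediate state, i.e.\ that the triangles one wishes to swap are both still empty and clockwise at that point. This is where the vertex-disjointness from Lemma~\ref{lem:triangles_are_far} does the real work, since it guarantees that the matchings governing $\Delta_i$ and $\Delta_j$ are untouched by the changes performed at the other $k-1$ triangles (whose vertices are disjoint from those of $\Delta_i$ and $\Delta_j$). Because the authors explicitly flag this as ``a trivial induction proof,'' I expect the intended write-up to be short, and I would keep it so, leaning entirely on the two preceding lemmas and suppressing the routine permutation-theoretic details.
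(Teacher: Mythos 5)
Your proposal is correct and matches the paper's intent exactly: the paper dispatches this lemma with the remark that it follows ``by the same token (that is, a trivial induction proof)'' from Lemma~\ref{lem:moves_are_independent}, whose own proof rests on the vertex-disjointness of Lemma~\ref{lem:triangles_are_far} in precisely the way you describe. Your write-up simply makes the induction and the adjacent-transposition bookkeeping explicit, which is a faithful expansion of the intended argument rather than a different route.
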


\begin{lemma}
\label{lem:swapping}
Suppose we have a path in the state transition graph, in which at some point  a clockwise (counter-clockwise) move is performed immediately after a counter-clockwise (clockwise) move. Then we can swap the order of these two moves without changing any other states or moves along the path. In particular, the starting and ending states remain the same.
\end{lemma}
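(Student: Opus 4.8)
The plan is to reduce the statement to a single geometric fact: the two empty black triangles involved in the consecutive opposite moves cannot share a vertex. Once this is known, the two moves act on disjoint collections of arrows and therefore commute, which is exactly the swap we want. Consider the relevant length-two portion of the path, $s_0\to s_1\to s_2$, and treat the case where $s_0\to s_1$ is a counter-clockwise move on a black triangle $\Delta_1$ and $s_1\to s_2$ is a clockwise move on a black triangle $\Delta_2$; the opposite case is entirely symmetric. For these moves to be admissible, $\Delta_1$ must be counter-clockwise and empty in $s_0$, and $\Delta_2$ must be clockwise and empty in $s_1$. We may assume $\Delta_1\neq\Delta_2$, since if the two moves are about the same triangle the second simply reverses the first ($s_2=s_0$) and there is nothing to prove.

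The key observation is to examine the intermediate state $s_1$. Performing the counter-clockwise move on $\Delta_1$ flips it from counter-clockwise to clockwise while keeping it empty, so in $s_1$ the triangle $\Delta_1$ is clockwise and empty. By hypothesis $\Delta_2$ is also clockwise and empty in $s_1$. Hence Lemma \ref{lem:triangles_are_far} applies and $\Delta_1$ and $\Delta_2$ share no vertex. (In the symmetric case, both triangles are \emph{counter}-clockwise and empty in $s_1$, and the same lemma applies.) In particular there is no white triangle adjacent to both $\Delta_1$ and $\Delta_2$, since such a triangle would supply a common vertex.

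With disjointness in hand, the remainder is routine. Each clock move only alters the three arrows incident to the three vertices of the triangle being changed (as in Figure \ref{fig:clockmove}), so the two moves modify disjoint sets of arrows. Consequently $\Delta_2$ is already clockwise and empty in $s_0$, because the move on $\Delta_1$ does not touch its vertices or surrounding white triangles; thus the clockwise move on $\Delta_2$ can be performed first, producing a state $s_1'$. Likewise $\Delta_1$ is still counter-clockwise and empty in $s_1'$, so the counter-clockwise move on $\Delta_1$ can be applied, and since the two operations act independently the result equals $s_2$. As only the intermediate state is replaced (by $s_1'$), the states $s_0$ and $s_2$, and hence the entire remainder of the path, are unchanged. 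Once both triangles are recognized as same-oriented empty triangles in $s_1$, one may alternatively invoke Lemma \ref{lem:moves_are_independent} to obtain the commutativity directly.

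The only real subtlety is the verification that $\Delta_1$ and $\Delta_2$ do not share a vertex, and the clean way past it is precisely to inspect the intermediate state $s_1$: there the earlier move has already aligned the orientations of the two triangles, so that Lemma \ref{lem:triangles_are_far} — which constrains empty triangles of the \emph{same} orientation — becomes applicable even though the moves themselves were of opposite type.
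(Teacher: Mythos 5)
Your proposal is correct and follows essentially the same route as the paper: the key step in both is to view the two moves from the intermediate state, where both triangles are empty and of the same orientation, so that Lemma \ref{lem:triangles_are_far} (equivalently, Lemma \ref{lem:moves_are_independent}) yields their independence and hence commutativity. The paper simply cites Lemma \ref{lem:moves_are_independent} directly, whereas you re-derive its content inline before noting that it could be invoked; the substance is identical.
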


\begin{proof}
Trivial from Lemma \ref{lem:moves_are_independent}: from the point of view of the state between the two moves to be swapped, both moves are clockwise (counter-clockwise). Thus a new state can be generated by performing them both as in Lemma \ref{lem:moves_are_independent}. The modified path will pass through this new state.
\end{proof}

Note that the operation of the lemma may lead to one or two canceling pairs of moves in the sequence and thus may shorten the path by two or four steps. The difference of the numbers of clockwise and counterclockwise moves however remains constant. The next lemma is another obvious consequence.

\begin{lemma}
\label{lem:path_construction}
Suppose two states are in the same connected component of the state transition graph. Then it is possible to connect them with a sequence of moves in which a series of counter-clockwise moves is followed by a series of clockwise ones (that is, a clockwise move is never followed by a counter-clockwise one).
\end{lemma}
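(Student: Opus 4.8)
The plan is to take an arbitrary path in the state transition graph joining the two states and to \emph{sort} its moves by repeated application of Lemma \ref{lem:swapping}, pushing every counter-clockwise move ahead of every clockwise move. First I would record the path as a word in two letters by labelling each move \textbf{up} (clockwise) or \textbf{down} (counter-clockwise). The desired conclusion is exactly that this word has the shape ``all downs, then all ups,'' and I would note that this is equivalent to saying the word contains no adjacent pair in which a clockwise move is immediately followed by a counter-clockwise move: if no such adjacency occurs, then once the first clockwise move appears every later move is again clockwise, giving the required block form, and the converse is clear.

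Next, so long as the current path still contains an adjacent clockwise-then-counter-clockwise pair, I would apply Lemma \ref{lem:swapping} in the form where a counter-clockwise move is performed immediately after a clockwise one, swapping that pair into a counter-clockwise-then-clockwise pair while leaving all other states and moves, and in particular both endpoints, unchanged. To guarantee termination I would track the monovariant that counts \emph{inversions}: the number of ordered pairs of moves (one earlier, one later) in which the earlier is clockwise and the later is counter-clockwise, equivalently the sum, over all counter-clockwise moves, of the number of clockwise moves preceding it. A single swap reorders only two adjacent entries and removes precisely the one inversion they formed, so it strictly decreases this nonnegative integer by one. Since no clockwise-then-counter-clockwise adjacency remains exactly when there are no inversions, the process halts in the required configuration.

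The one point needing care is the phenomenon flagged after Lemma \ref{lem:swapping}: a swap can produce an adjacent canceling pair, which we then delete (shortening the path by two or four steps). I would observe that deleting moves can never raise the inversion count, since removing an entry only removes the inversions to which it was a party; hence the monovariant remains strictly decreasing across each combined swap-and-cancel step, and the argument terminates as before. I do not anticipate a real obstacle here. The whole argument is a bubble sort, and its only genuine subtlety is verifying that cancellations respect the monovariant, which is why this lemma is an immediate consequence of Lemma \ref{lem:swapping}.
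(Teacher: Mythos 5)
Your proof is correct and is exactly the argument the paper intends: the paper presents this lemma as an ``obvious consequence'' of Lemma \ref{lem:swapping}, i.e.\ precisely the bubble sort you describe, and your inversion-count monovariant supplies the termination argument that the paper leaves implicit. The only point worth adding is that an adjacent clockwise-then-counter-clockwise pair acting on the \emph{same} black triangle cannot be swapped (the mechanism behind Lemma \ref{lem:swapping} presupposes two distinct triangles) and must instead be deleted outright as a canceling pair; since deletion also strictly decreases your inversion count, your termination argument covers this case verbatim.
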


Regarding cancelations of inverse moves, let us make the following observation.

\begin{lemma}
\label{lem:cancel}
If a path in the state transition graph contains a move and its inverse, then there exists another path between the same endpoints, made up of the same moves as before (possibly in a different sequence) minus the canceling pair.
\end{lemma}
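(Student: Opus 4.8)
The plan is to bring the offending move and its inverse into adjacent positions along the path and then delete them, since a clockwise move immediately followed by its inverse counter-clockwise move about the same triangle $\Delta$ returns the state to where it was. The entire difficulty is the bringing-adjacent step: the only unconditional commutation available is Lemma~\ref{lem:swapping}, which swaps a clockwise move past an adjacent counter-clockwise one about a \emph{different} triangle, whereas two adjacent moves of the \emph{same} direction need not commute at all — the reordering principle for same-direction moves requires the triangles involved to be simultaneously empty, and in general one such move can first create the empty triangle that the next one changes.

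To sidestep this, among all pairs in the path consisting of a move and its inverse I would select one whose two occurrences are as close together as possible; say a clockwise change of $\Delta$ at step $i$ and a counter-clockwise change of $\Delta$ at step $j>i$, with $j-i$ minimal (the reversed case is symmetric, or one reverses the whole path, interchanging the two directions). First I record that the moves about any single black triangle must alternate in direction, since a clockwise move consumes a clockwise-empty triangle and leaves it counter-clockwise-empty, and conversely; hence the two \emph{nearest} moves about any fixed triangle always have opposite directions and form a candidate canceling pair. Minimality of $j-i$ then forces every triangle to be changed at most once strictly between steps $i$ and $j$: two changes of a common triangle inside this range would, by alternation, contain an opposite adjacent pair of smaller gap, contradicting minimality. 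In particular the intervening moves are about pairwise distinct triangles, all different from $\Delta$.

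With this in hand every swap I need is between opposite-direction moves about distinct triangles, which is exactly the situation covered by Lemma~\ref{lem:swapping}, and no such swap can produce a fresh canceling pair (that would require two same-triangle moves, which are absent inside the range). So I would first bubble the intervening block into the form ``all counter-clockwise moves, then all clockwise moves'' using only these swaps, then push the clockwise change of $\Delta$ rightward past the counter-clockwise sub-block and the counter-clockwise change of $\Delta$ leftward past the clockwise sub-block, again by Lemma~\ref{lem:swapping}; this leaves the two special moves adjacent. Throughout, Lemma~\ref{lem:moves_are_independent} guarantees the remaining triangles stay empty so that every intermediate configuration is a genuine state and the multiset of moves is preserved. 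Finally the now-adjacent clockwise and counter-clockwise changes of $\Delta$ undo each other, returning to the state just before the clockwise move, so I delete both and obtain a path with the same endpoints and the same moves minus this one canceling pair.

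The main obstacle, and the reason the statement is not completely routine, is precisely that same-direction moves do not commute in general (cf.\ Lemma~\ref{lem:triangles_are_far} and the possibility that a clockwise move creates or destroys a counter-clockwise empty neighbor): one cannot naively slide the two occurrences together. The minimal-gap selection is what reduces every required commutation to the benign opposite-direction, distinct-triangle case of Lemma~\ref{lem:swapping} and simultaneously rules out collateral cancellations. The one bookkeeping point I would verify carefully is that after the bubbling the adjacent pair really is a clockwise move immediately followed by its inverse, rather than two moves that merely share the triangle $\Delta$; this holds because from the state reached after a clockwise change of $\Delta$ there is a unique counter-clockwise move about $\Delta$, and it reverses that change.
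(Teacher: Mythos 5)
Your overall strategy — sort the intervening block into ``counter-clockwise then clockwise'' with Lemma~\ref{lem:swapping} and slide the two offending moves together — is the same as the paper's, and your minimal-gap selection is a reasonable device for keeping every swap clean. But the claim you lean on, that \emph{the moves about any single black triangle must alternate in direction}, is false. Your justification only shows that a clockwise change leaves $\Delta$ counter-clockwise-empty \emph{at that instant}; afterwards $\Delta$ generally ceases to be empty, and when it next becomes empty it can do so with \emph{either} orientation, because the arrows at its vertices may have wound all the way around them in the meantime. This is not a hypothetical: in a cyclic component of a toric trinity, Theorem~\ref{thm:one_turn} produces a cycle of clockwise moves changing every black triangle exactly once, and traversing it twice gives a path in which the two nearest moves about each triangle are both clockwise (the three-state cycle of Figure~\ref{fig:small} already exhibits this). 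Consequently your deduction that minimality forces every triangle to be changed \emph{at most once} strictly between steps $i$ and $j$ is unjustified: two same-direction changes of a common triangle inside the range do not produce a closer canceling pair.

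The good news is that your argument does not actually need that strong conclusion. What it needs is only that (a) $\Delta$ itself is not changed strictly between $i$ and $j$, and (b) no other triangle is changed in \emph{both} directions there; both follow from minimality of $j-i$ directly, with no alternation claim — an intermediate change of $\Delta$ would pair with step $i$ or step $j$ at smaller gap, and a triangle changed in both directions has two consecutive occurrences of opposite direction, again a closer canceling pair. Under (a) and (b), every adjacent opposite-direction pair encountered during your bubbling is automatically about distinct triangles, so Lemma~\ref{lem:swapping} applies and no collateral cancellation can arise, and the rest of your proof goes through verbatim. With that repair your argument is correct and is essentially the paper's proof; the paper skips the minimal-gap selection entirely and instead tacitly allows the sorting process to produce extra cancellations (cf.\ the remark following Lemma~\ref{lem:swapping}), which is harmless for the way the lemma is used.
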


\begin{proof}
Assume without loss of generality that the first occurrence of the move, $\Delta$, is clockwise and the second, $\Delta^{-1}$, is counter-clockwise. Let us use Lemma \ref{lem:swapping} to re-order the intervening steps of the path so that just one sequence of counter-clockwise moves is followed by a sequence of clockwise ones. (We may keep applying the Lemma as long as there exist counter-clockwise moves following clockwise ones in the subsequence.) Let us then further apply the Lemma to swap the order of $\Delta$ and the counter-clockwise moves following it, as well as to swap the order of $\Delta^{-1}$ and the clockwise moves preceding it. After this $\Delta$ and $\Delta^{-1}$ appear in adjacent positions and can be canceled.
\end{proof}

\section{Acyclic components}
\label{sec:acyclic}

In any trinity, we call a state \emph{recurrent} if there is a non-empty sequence of clockwise moves that starts and ends with it. A component of the state transition graph that contains a recurrent state is called \emph{cyclic} and other components are called \emph{acyclic}. In this section we focus on the structure of acyclic components and prove that they become distributive lattices in a natural way. In order to apply to both planar and toric cases, the argument (which follows Gilmer and Litherland \cite{gl} with a few added details) will be rather abstract.

Since we have only finitely many states, performing infinitely many counter-clockwise moves would result in a cycle. By reversing such a cycle we would find its states to be recurrent. Hence in an acyclic component it is not possible to do counter-clockwise moves indefinitely and thus local minima (states with only clockwise moves) exist.

Furthermore, 
such a local minimum is unique (and hence a global minimum). Indeed, if two existed then so would a path between them as in Lemma \ref{lem:path_construction}, which is an obvious contradiction.

Having found the unique minimal state $a$ in our acyclic component, our strategy will be to use it as a base point and to identify each state in the component, in an appropriate sense, with the set of moves that leads to it from $a$. This identification requires two separate arguments, Lemmas \ref{lem:set_of_moves-->state} and \ref{lem:state-->set_of_moves}.

\begin{lemma}
\label{lem:set_of_moves-->state}
When changing a given starting state through a series of (clockwise and counter-clockwise) clock moves, the same ending state will result as long as these same moves (with the same algebraic multiplicities) are performed in any allowable order.
\end{lemma}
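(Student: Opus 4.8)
The plan is to reduce, in three stages, to a single confluence statement about sequences of moves that are all of the same type. Suppose we are given two allowable sequences of clock moves, both starting at a fixed state $s$, with the same algebraic multiplicity $m_\Delta$ (number of clockwise minus number of counter-clockwise moves) on every black triangle $\Delta$. First I would apply Lemma \ref{lem:cancel} repeatedly to each sequence: as long as some triangle carries moves in both directions, a canceling pair exists and can be removed without changing the endpoint. After finitely many cancellations neither sequence contains a move and its inverse, so each triangle $\Delta$ now appears only in the direction of the sign of $m_\Delta$, exactly $|m_\Delta|$ times. Consequently the two reduced sequences have \emph{identical} multisets of directed moves. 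Next I would use Lemma \ref{lem:swapping} as a bubble sort to bring each reduced sequence into the canonical form in which every counter-clockwise move precedes every clockwise move; since no triangle is now used in both directions, no swap ever creates a canceling pair, so the multiset, and hence its splitting into a counter-clockwise block followed by a clockwise block, is preserved and agrees across the two sequences.

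It therefore suffices to prove the following confluence statement and its mirror image: from a fixed state, any two allowable \emph{clockwise-only} sequences with the same multiset of moves reach the same state. Granting this, the two counter-clockwise blocks (same multiset, same start $s$) reach a common intermediate state $s'$, and then the two clockwise blocks (same multiset, common start $s'$) reach a common final state, which is the desired conclusion.

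To prove the confluence statement I would induct on the length $k$ of the sequences $\sigma=(A_1,\dots,A_k)$ and $\tau$. Let $\Delta$ be the triangle of the first move $A_1$, so $\Delta$ is clockwise-empty at $s$. The key observation --- and, I expect, the main obstacle, since the same triangle may legitimately be changed more than once in a monochromatic sequence --- is that $\Delta$ remains clockwise-empty throughout $\tau$ until a move about $\Delta$ is actually performed. Indeed, a clockwise move about a triangle $\Delta'$ only reassigns the matches of the three vertices of $\Delta'$ among the three white triangles adjacent to $\Delta'$; if $\Delta'$ is vertex-disjoint from $\Delta$ then, since a white triangle bordering both would have to share an edge with each and so force a common vertex, none of the white triangles or vertices bordering $\Delta$ is touched, and the status of $\Delta$ is unchanged. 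Arguing by induction along $\tau$, every move before the first $\Delta$-move is about a triangle that is clockwise-empty at its moment and hence, by Lemma \ref{lem:triangles_are_far}, vertex-disjoint from the still-clockwise-empty $\Delta$; thus $\Delta$ survives.

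With this persistence in hand, the $\Delta$-move is available at every state preceding its first occurrence $B_j$ in $\tau$, so I can transpose it leftward one step at a time, each transposition being a commutation of two clockwise moves available at a common state as guaranteed by Lemma \ref{lem:moves_are_independent}. This brings an occurrence of the $\Delta$-move to the front of $\tau$. Now $\sigma$ and the reordered $\tau$ agree on their first move, hence on the state after it; deleting that move leaves two allowable clockwise sequences of length $k-1$ from the same state with the same multiset, and the induction hypothesis finishes the argument. The mirror (counter-clockwise) confluence is identical after reversing the orientation of $\Sigma$, since Lemmas \ref{lem:triangles_are_far} and \ref{lem:moves_are_independent} are symmetric in the two types of move.
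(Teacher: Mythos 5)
Your proof is correct, but it takes a genuinely different route from the paper's. The paper argues pointwise at each vertex: the white triangle matched to a given vertex rotates one step counter-clockwise around that vertex for every clockwise move involving it, and one step clockwise for every counter-clockwise move, so the terminal match at every vertex --- and hence the terminal state --- is determined by the algebraic multiplicities alone; this is a three-line argument. You instead run a Newman-style confluence argument: cancel inverse pairs via Lemma \ref{lem:cancel}, sort each sequence into a counter-clockwise block followed by a clockwise block via Lemma \ref{lem:swapping}, and then prove confluence of monochromatic sequences by induction on length, using adjacent transpositions (Lemma \ref{lem:moves_are_independent}) once you have established that a clockwise-empty triangle persists until it is itself operated on. That persistence claim is indeed the crux, and your proof of it (vertex-disjointness from Lemma \ref{lem:triangles_are_far}, plus the observation that a white triangle bordering two black triangles forces a shared vertex) is precisely the argument the paper uses to prove Lemma \ref{lem:moves_are_independent}, so you could have invoked the first assertion of that lemma directly. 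What your approach buys is an explicit factorization of any reordering into elementary commutations and cancellations, at the cost of length; what the paper's buys is brevity and the stronger pointwise fact that each arrow's entire itinerary around its vertex is determined up to canceling pairs. One small inaccuracy that does not affect correctness: deducing the counter-clockwise confluence by ``reversing the orientation of $\Sigma$'' does not quite work, since reversing the orientation exchanges the black and white triangles; the right (and available) justification is the one you give in the same breath, namely that Lemmas \ref{lem:triangles_are_far} and \ref{lem:moves_are_independent} are stated for both types of move, so your induction applies verbatim with the two directions exchanged.
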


\begin{proof}
Let us concentrate on an arbitrary vertex during our series of moves. The white triangle that it is matched to changes by a counter-clockwise turn for every clockwise move involving the vertex, and likewise by a clockwise turn for every counter-clockwise move. So the last triangle is the same for every order in which the moves are performed. (In fact, up to canceling pairs of moves, the order in which the triangles incident to our vertex are changed is the same in every allowable sequence of the given moves.)
\end{proof}

In acyclic components the converse holds as well.

\begin{lemma}
In an acyclic component of the state transition graph, if two paths connect the same endpoints, then the algebraic multiplicity (exponent sum) of each move is the same along the two paths.
\label{lem:state-->set_of_moves}
\end{lemma}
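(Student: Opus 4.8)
The plan is to recast the statement as a claim about closed loops. Two paths $P,P'$ with the same endpoints $x,y$ can be concatenated into a closed loop $L=P\cdot\overline{P'}$ based at $x$, where $\overline{P'}$ denotes $P'$ traversed backwards (which turns each of its clockwise moves into a counter-clockwise one and vice versa). For each black triangle $\Delta$ the exponent sum of $\Delta$ along $L$ equals the difference of the exponent sums along $P$ and $P'$. Hence the lemma is equivalent to the assertion that in an acyclic component \emph{every} closed loop has exponent sum zero for each move.

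To prove this, I would first move the basepoint of the loop to the unique global minimum $a$ of the component, whose existence was established above. Choosing any path $Q$ from $a$ to the basepoint of $L$, the conjugated loop $Q\cdot L\cdot\overline{Q}$ is based at $a$, and for every $\Delta$ its exponent sum equals that of $L$, since the contributions of $Q$ and $\overline{Q}$ cancel. Thus it suffices to show that every closed loop based at $a$ has vanishing exponent sums.

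Now I would apply Lemma \ref{lem:path_construction} to reorder such a loop so that all of its counter-clockwise moves precede all of its clockwise moves; by the remark following Lemma \ref{lem:swapping} this reordering only deletes canceling inverse pairs (a clockwise and a counter-clockwise move about the \emph{same} triangle) and therefore preserves the exponent sum of every individual move. The loop now reads $a\xrightarrow{\text{ccw}}z\xrightarrow{\text{cw}}a$. But $a$ is the global minimum, so it admits no counter-clockwise move; hence the initial counter-clockwise segment is empty and the entire loop consists of clockwise moves only. A nonempty clockwise loop based at $a$ would exhibit $a$ as a recurrent state, contradicting the acyclicity of the component. Therefore the loop is trivial, every exponent sum is zero, and the lemma follows by specializing to $L=P\cdot\overline{P'}$.

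The step I expect to be the real crux — and the one where a more naive approach stalls — is the passage to the basepoint $a$. Without it, reducing a general loop (say via Lemma \ref{lem:cancel}) to one in which each triangle is changed in only one direction still leaves the possibility of a genuinely \emph{mixed} loop: a nonempty block of counter-clockwise moves followed by a nonempty block of clockwise moves on disjoint sets of triangles, a configuration that does not obviously force a recurrent state. Anchoring the loop at the minimum removes this difficulty at a stroke, because the absence of counter-clockwise moves there collapses the mixed loop to a purely clockwise one, which acyclicity immediately forbids.
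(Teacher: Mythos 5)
Your proof is correct, but it takes a genuinely different route from the paper's. The paper also passes to the closed cycle $\Pi_1\Pi_2^{-1}$, but then applies Lemma \ref{lem:cancel} to arrange that each black triangle is changed in only one direction, and rules out the resulting ``mixed'' cycle by a geometric argument: if some triangles were changed only clockwise and others only counter-clockwise, connectedness of the surface would produce two such triangles of different types sharing a vertex, and the arrow pointing to that vertex would move but get stuck, unable to complete a full turn and return to its starting position. You instead sidestep the mixed-cycle difficulty entirely by conjugating the loop to the unique global minimum $a$ (whose existence and uniqueness are established just before the lemma, so there is no circularity), normalizing via Lemma \ref{lem:path_construction} into a counter-clockwise block followed by a clockwise block, and observing that the counter-clockwise block must be empty at $a$; acyclicity then kills the remaining purely clockwise loop. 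Your argument is purely order-theoretic and avoids the paper's somewhat informally justified ``stuck arrow'' step, at the cost of leaning on the global structure of the component (the unique minimum) rather than the local geometry of the trinity; the paper's route, by contrast, yields the intermediate fact --- of independent interest, and reused in spirit in the toric analysis --- that any cycle in which each triangle moves in only one direction must be entirely clockwise or entirely counter-clockwise. One point you assert without full detail, but which does hold, is that the normalization of Lemma \ref{lem:path_construction} preserves per-triangle exponent sums: each application of Lemma \ref{lem:swapping} only permutes the multiset of moves or deletes a clockwise/counter-clockwise pair about the \emph{same} triangle, which is slightly more than the remark after that lemma explicitly records (it only mentions the total difference of clockwise and counter-clockwise counts), so it is worth saying a sentence about why.
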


\begin{proof}
Let the two paths be $\Pi_1$ and $\Pi_2$ and consider the cycle $\Pi_1\Pi_2^{-1}$. We may modify this using Lemma \ref{lem:cancel} so that each black triangle that is changed along the cycle is always changed the same way. This does not affect exponent sums. Our goal is to show that such a cycle $\Omega$ has to be trivial. 

The black triangles of $\mathcal T$ are classified into three groups with respect to $\Omega$: those that only occur as clockwise moves, those that only occur as counter-clockwise moves, and those that are not changed. If neither of the first two groups is empty, then by the connectedness of the surface, there has to be a pair of black triangles from two different groups that share a vertex. The arrow pointing to this vertex will thus not be stationary through our sequence of moves, furthermore it is easy to see that it is impossible for it to return to its starting position. (Under our assumption moves of the arrow cannot be reversed, and once it reaches the `wrong kind of black triangle,' the arrow will get stuck, unable to complete a full turn.)

This contradiction ensures that $\Omega$ consists only of clockwise moves or only of counter-clockwise moves. Hence if $\Omega$ is non-trivial, then all states along it are recurrent. This completes the proof.
\end{proof}

If $a$ is the unique minimal state in our acyclic component of the state transition graph, Lemma \ref{lem:path_construction} guarantees that all states $s$ in the component can be reached from $a$ by a sequence $\Pi_s$ of clockwise moves. Lemmas \ref{lem:set_of_moves-->state} and \ref{lem:state-->set_of_moves} imply that $s$ may be identified with the function $\varphi_s$ that associates to a black triangle the number of times it is changed in $\Pi_s$. Note that the values of $\varphi_s$ need not all be $0$'s and $1$'s, cf.\ \cite[Remark on p.\ 243]{gl} and \cite{ct} for a detailed study in the plane curve case. 

Given any ground set, integer-valued functions defined on it form a distributive lattice under the operations of minimum and maximum. For us the ground set will be the set of black triangles of $\mathcal T$. For our acyclic component to inherit the distributive lattice structure, it suffices to prove the following Proposition.

\begin{prop}
For any acyclic component of the state transition graph, the image of the association $s\mapsto \varphi_s$, from the set of states in the component to integer-valued functions defined on black triangles, is closed under minima and maxima.
\end{prop}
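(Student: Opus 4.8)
The plan is to realize, for any two states $s,t$ in the acyclic component, both the pointwise minimum $\min(\varphi_s,\varphi_t)$ and the pointwise maximum $\max(\varphi_s,\varphi_t)$ as honest states, by exhibiting them as intermediate states of suitably sorted paths from $s$ to $t$. I would handle the minimum in detail and obtain the maximum by the identical method with the roles of clockwise and counter-clockwise moves interchanged. Throughout I use that $\varphi_s(\Delta)$ equals the exponent sum of $\Delta$ along \emph{any} path from the base point $a$ to $s$, so that the exponent sum of $\Delta$ along any path from $s$ to $t$ is $\varphi_t(\Delta)-\varphi_s(\Delta)$; this is the content of Lemmas \ref{lem:set_of_moves-->state} and \ref{lem:state-->set_of_moves}, valid because the component is acyclic.

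First I would fix any path from $s$ to $t$ (one exists, as $s$ and $t$ lie in the same component) and put it into a convenient normal form. As long as some black triangle is changed by both a clockwise and a counter-clockwise move along the path, that pair is a move together with its inverse, so Lemma \ref{lem:cancel} produces a shorter path between the same endpoints with that pair deleted. Since length strictly decreases, this terminates in a \emph{reduced} path, in which every black triangle is changed in at most one direction. I would then use Lemma \ref{lem:swapping} to bubble all counter-clockwise moves to the front (every adjacent clockwise-then-counter-clockwise pair is on distinct triangles, since the path is reduced, and may be swapped), obtaining a path $s\to u\to t$ whose first block is entirely counter-clockwise and whose second block is entirely clockwise, with disjoint supports.

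Now I would read off $\varphi_u$. On the triangles appearing in the counter-clockwise block the net exponent sum $\varphi_t(\Delta)-\varphi_s(\Delta)$ is negative and equals minus the number of times $\Delta$ occurs there, while on all other triangles this net quantity is nonnegative. Hence the counter-clockwise block lowers $\varphi_s(\Delta)$ exactly to $\varphi_t(\Delta)$ on the triangles with $\varphi_s(\Delta)>\varphi_t(\Delta)$ and leaves every other value equal to $\varphi_s(\Delta)$, so $\varphi_u=\min(\varphi_s,\varphi_t)$. Sorting the reduced path the other way, with all clockwise moves bubbled to the front (again permitted by Lemma \ref{lem:swapping}), and taking the junction state $w$, the same bookkeeping gives $\varphi_w=\max(\varphi_s,\varphi_t)$. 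Since $u$ and $w$ lie on paths joining $s$ and $t$, they are states of the same component, so the image of $s\mapsto\varphi_s$ is closed under minima and maxima, as required.

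The only delicate point is securing the reduced, then sorted, normal form: one must interleave Lemmas \ref{lem:cancel} and \ref{lem:swapping} so that the final path is monotone (counter-clockwise block before clockwise block) \emph{and} has no triangle changed in both directions, for it is exactly this disjointness of supports that forces the junction state's count to be the pointwise extremum rather than merely an upper or lower bound. I expect this bookkeeping, rather than any new geometric input, to be the main thing requiring care, which is consistent with the argument following Gilmer and Litherland with only a few added details.
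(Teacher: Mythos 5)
Your argument is correct, but it takes a genuinely different route from the paper's. You produce the meet constructively: starting from any path joining $s$ and $t$, you first use Lemma \ref{lem:cancel} to delete cancelling pairs until no black triangle is changed in both directions, then use Lemma \ref{lem:swapping} to sort the result into a counter-clockwise block followed by a clockwise block; since each swap merely permutes two moves on distinct triangles, the multiset of moves --- and hence reducedness --- is preserved, which settles the ``interleaving'' worry you raise at the end (reduce first, then sort; no genuine interleaving is needed). The junction state then realizes $\min\{\,\varphi_s,\varphi_t\,\}$ precisely because the two blocks have disjoint supports and net multiplicities prescribed by $\varphi_t-\varphi_s$, and the join is obtained by sorting the other way. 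The paper argues by an extremal principle instead: it picks a state $r$ from which both $s$ and $t$ are reachable by clockwise moves and whose $\varphi_r$ has maximal value sum, notes $\varphi_r\le\min\{\,\varphi_s,\varphi_t\,\}$, and rules out strict inequality by a word-rewriting computation showing that a clockwise move $\Delta$ needed on both the $r\to s$ and $r\to t$ paths could be appended to $r$, contradicting maximality. Both proofs rest on the same toolkit (Lemmas \ref{lem:cancel}, \ref{lem:swapping}, \ref{lem:set_of_moves-->state} and \ref{lem:state-->set_of_moves}). What yours buys is an explicit witness: the meet and join appear as concrete intermediate states of a single normalized path, in effect a sharpening of Lemma \ref{lem:path_construction} to paths whose descending and ascending blocks have disjoint supports. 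What the paper's formulation buys is that it avoids having to discuss normal forms of paths at all, at the price of a less transparent manipulation of words.
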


\begin{proof}
Consider two states $s$ and $t$ of the acyclic component and their functions of multiplicities $\varphi_s$ and $\varphi_t$. Let $r$ be a state so that 
\begin{itemize}
\item both $s$ and $t$ can be reached from $r$ by clockwise moves, and
\item among such states, $\varphi_r$ has a maximal sum of values.
\end{itemize}
We claim that $\varphi_r=\min\{\,\varphi_s,\varphi_t\,\}$. The inequality $\varphi_r\le\min\{\,\varphi_s,\varphi_t\,\}$ is obvious. If it was strict, then there would be a clockwise move $\Delta$ that is necessary (that is, has to occur with positive multiplicity) to get from $r$ to $s$, as well as to get from $r$ to $t$. Let us fix four sequences of clockwise moves, $K$, $L$, $M$, and $N$, so that the word $K\Delta L$ describes a path from $r$ to $s$ and $M\Delta N$ is a path from $r$ to $t$. Without loss of generality we may assume that the moves comprising $M$ do not occur in $K$ or $L$, as well as that $\Delta$ does not occur in $K$ or $M$.

Now the following words, generated by repeated applications of Lemma \ref{lem:swapping}, all describe paths from $s$ to $t$:
\begin{multline*}
L^{-1}\Delta^{-1}K^{-1}M\Delta N 
\to L^{-1}M\Delta^{-1}K^{-1}\Delta N 
\to L^{-1}M\Delta^{-1}\Delta K^{-1}N \\
\to L^{-1}MK^{-1}N \to L^{-1}K^{-1}MN 
\to L^{-1}K^{-1}M\Delta^{-1}\Delta N
\to L^{-1}K^{-1}\Delta^{-1}M\Delta N.
\end{multline*}
The end of the last word, $M\Delta N$, tells us that the path described by the word passes through the state $r$. 
The beginning of the last word, $L^{-1}K^{-1}\Delta^{-1}$, shows that the move $\Delta$ may be applied to $r$ and the resulting state is such that both $s$ (using the word $KL$) and $t$ (using the word $MN$, as guaranteed by the word $L^{-1}K^{-1}MN$ in the above sequence) can be reached from it by clockwise moves. This contradicts the definition of $r$ and completes our proof regarding minima.

Maxima can be handled by `turning everything upside down': An acyclic component has a unique maximum state $b$ too, and all states $s$ can be uniquely described by the multiplicities of moves, encoded in the function $\varphi_b-\varphi_s$, required to reach $b$ from them. By the same proof as above, 
\[\min\{\,\varphi_b-\varphi_s,\varphi_b-\varphi_t\,\}=\varphi_b-\max\{\,\varphi_s,\varphi_t\,\}\] 
describes a state for any $s$ and $t$.
\end{proof}

\begin{cor}
\label{cor:lattice}
Any acyclic component of the state transition graph has a distributive lattice structure in which the wedge and the join of the states $s$ and $t$ are the unique states described by the functions $\min\{\,\varphi_s,\varphi_t\,\}$ and $\max\{\,\varphi_s,\varphi_t\,\}$, respectively.
\end{cor}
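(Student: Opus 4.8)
The plan is to deduce the Corollary formally from the preceding Proposition together with the injectivity of the association $s\mapsto\varphi_s$. First I would record that this association is a well-defined injection. Well-definedness (independence of the chosen clockwise path $\Pi_s$ from the base point $a$) is exactly the content of Lemma \ref{lem:state-->set_of_moves}, while injectivity follows from Lemma \ref{lem:set_of_moves-->state}: if $\varphi_s=\varphi_t$, then applying this common collection of moves to $a$ produces a single ending state, forcing $s=t$. Thus $s\mapsto\varphi_s$ identifies the states of the component with its image $\mathcal I$ inside the set of integer-valued functions on the black triangles of $\mathcal T$.

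Next I would invoke the standard fact that, for a finite ground set, the integer-valued functions on it form a distributive lattice under pointwise $\min$ and $\max$. The Proposition says precisely that $\mathcal I$ is closed under both operations, hence is a sublattice, and any sublattice of a distributive lattice is again distributive. Transporting this structure back along the bijection, the set of states acquires a distributive lattice structure whose meet and join are the states corresponding to $\min\{\,\varphi_s,\varphi_t\,\}$ and $\max\{\,\varphi_s,\varphi_t\,\}$. Existence of these states is guaranteed by the Proposition and their uniqueness by the injectivity above, which is exactly the content of the Corollary.

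The one point deserving care — and the only genuine step rather than pure bookkeeping — is to verify that the partial order underlying this lattice, namely $s\preceq t$ iff $\varphi_s\le\varphi_t$ pointwise, agrees with the directed structure of the state transition graph, that is, $s\preceq t$ exactly when $t$ is reachable from $s$ by a sequence of clockwise moves. One implication is immediate: concatenating $\Pi_s$ with a clockwise path from $s$ to $t$ yields a clockwise path from $a$ to $t$, so $\varphi_t$ exceeds $\varphi_s$ by nonnegative multiplicities. For the converse I would start from $\varphi_s\le\varphi_t$, take any path from $s$ to $t$, use Lemma \ref{lem:path_construction} to rearrange it as counter-clockwise moves followed by clockwise ones, and then apply Lemma \ref{lem:cancel} to remove inverse pairs. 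Since the exponent sum of each move along an $s$-to-$t$ path equals $\varphi_t-\varphi_s\ge 0$ (by Lemma \ref{lem:state-->set_of_moves} applied to the two $a$-based descriptions), after cancellation only clockwise moves can survive, exhibiting $t$ as reachable from $s$ by moving upward. This compatibility confirms that the lattice structure is the natural one on the component, and with it in place the Corollary is a formal consequence of the Proposition.
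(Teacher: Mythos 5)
Your proposal is correct and follows essentially the same route as the paper, which states the Corollary as an immediate consequence of the Proposition together with the identification $s\mapsto\varphi_s$ furnished by Lemmas \ref{lem:set_of_moves-->state} and \ref{lem:state-->set_of_moves} and the fact that integer-valued functions form a distributive lattice under pointwise $\min$ and $\max$. Your final paragraph, checking that the induced order coincides with reachability by clockwise moves, is a correct and worthwhile elaboration of a point the paper leaves implicit.
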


\section{Planar trinities}
\label{sec:sphere}

In this section we will assume that our trinity $\mathcal T$ is drawn on the sphere and it is irreducible, i.e., it has $n(\mathcal T)\ge2$ and it can not be obtained as a nontrivial connected sum, as described in subsection \ref{sec:decomp}. (For the trivial trinity $\mathcal F$ of $n(\mathcal F)=1$, with its unique Tutte matching between two empty sets, all of our conclusions trivially hold.) We also fix an outer white triangle and call its vertices the roots.
We will 
represent trinities by planar drawings so that the outer white triangle becomes the unbounded region.

\begin{prop}
\label{prop:nocycle}
Planar trinities do not have recurrent states, i.e., all components of their state transition graphs are acyclic.
\end{prop}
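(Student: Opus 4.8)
The plan is to argue by contradiction, exhibiting a quantity that a clockwise move can only advance in one direction, so that a nontrivial loop of clockwise moves becomes impossible. Suppose $\Omega$ is a nontrivial sequence of clockwise moves returning some state to itself. The mechanism I would exploit is the local rotation recorded in Figure \ref{fig:clockmove}: a clockwise move rotates each of the three arrows it affects about its head (vertex) in one fixed angular direction (say counter-clockwise), and never in the other. Consequently, for any fixed vertex $v$, the arrow terminating at $v$ can only ever turn counter-clockwise as we traverse $\Omega$, advancing step by step through the cyclic sequence of white triangles incident to $v$; crucially, it advances across an incident black triangle $B$ exactly when $B$ is changed by a move, since such a move requires $B$ to be empty, which forces $v$ to be matched to one of the two white triangles flanking $B$.

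First I would record which black triangles are changed along $\Omega$. Here planarity enters decisively: a black triangle incident to a root can never host a move, because a move requires the black triangle to be empty, i.e.\ all three of its vertices matched, whereas the roots are never matched. In particular, the black triangle sharing an edge with the outer white triangle has two roots among its vertices and is therefore never changed. Thus the set of changed black triangles is a \emph{proper} subset of all black triangles (and it is nonempty because $\Omega$ is nontrivial).

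Next I would produce an interface. The graph on black triangles in which two are joined when they share a vertex is connected: any edge-path of triangles across the (connected) surface alternates colors, and two black triangles separated by a single white triangle meet at the common endpoint of the two shared edges. Since the changed black triangles form a nonempty proper subset, connectedness yields a vertex $v$ incident both to a changed black triangle $\Delta'$ and to an unchanged one $\Delta^*$. Now the contradiction: because $\Delta^*$ is never changed, the arrow at $v$ can never sweep across $\Delta^*$, so its only-counter-clockwise motion is confined to the arc about $v$ cut off by $\Delta^*$ and is therefore strictly monotone---it can never return to an earlier position. Yet $v$ is a vertex of the changed triangle $\Delta'$, and each change of $\Delta'$ does move the arrow at $v$; so the arrow moves at least once and cannot come back, contradicting that $\Omega$ returns to the starting state.

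The main obstacle, and the point where the genus really matters, is guaranteeing that some black triangle is never changed: this is exactly what the roots provide on the sphere and what fails on the torus (where, as the example of Figure \ref{fig:small} shows, every black triangle may be changed once, allowing the arrows to rotate fully by winding around non-contractible curves). Everything else---the monotone local rotation and the connectivity of the black-triangle/shared-vertex graph---is routine and mirrors the stuck-arrow argument already used in the proof of Lemma \ref{lem:state-->set_of_moves}.
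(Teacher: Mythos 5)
Your proof is correct and is essentially the paper's argument: both hinge on the observations that black triangles incident to a root can never be changed (so the changed triangles form a nonempty proper subset) and that clockwise moves rotate each arrow monotonically about its head, so an arrow blocked by an unchanged incident black triangle cannot return to its starting position. The only difference is packaging --- the paper propagates the ``frozen'' triangles outward from the roots by induction on the connected trinity, whereas you jump directly to an interface vertex between a changed and an unchanged black triangle, which is exactly the device the paper itself uses in the proof of Lemma \ref{lem:state-->set_of_moves}.
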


\begin{proof}
Since the roots are not matched, it is not possible to perform any moves on the black triangles incident to them. Hence the arrows pointing to the non-root vertices that are incident to these triangles, 
if they move at all, will not be able to return to their starting positions by only clockwise moves.
So if there is a sequence of clockwise moves returning to the same state, then none of the black triangles incident to the above vertices can be involved in those moves. But then the same is true for all black triangles incident to a vertex of the previous ones, and so on. By the connectedness of $\mathcal T$, it follows that the sequence has to be trivial.
\end{proof}

The goal of this section is to show the following extension of a result of Kauffman \cite{kauffman}. The second claim, generalizing a theorem of Gilmer and Litherland \cite{gl}, follows readily from Proposition \ref{prop:nocycle} and Corollary \ref{cor:lattice}.

\begin{thm}
\label{thm:plane}
The state transition graph of a planar trinity has only one connected component. It has a distributive lattice structure as described in Corollary \ref{cor:lattice}.
\end{thm}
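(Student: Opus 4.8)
The second assertion is immediate and I would dispose of it in one line: Proposition \ref{prop:nocycle} says every component of a planar trinity is acyclic, and Corollary \ref{cor:lattice} equips each acyclic component with the stated distributive lattice structure. So the entire content of the theorem is the first sentence, the connectedness of the state transition graph, and that is what the plan addresses. The decisive reduction is the following. Since by Proposition \ref{prop:nocycle} the whole graph is acyclic, the discussion opening Section \ref{sec:acyclic} shows that each component has a \emph{unique} global minimum, that is, a unique state admitting no counter-clockwise move — equivalently, a state none of whose empty black triangles is counter-clockwise. Distinct components have distinct minima, so the number of components equals the number of such minimal states. Hence it suffices to prove that a planar trinity possesses a unique state in which every empty black triangle is clockwise.

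To prove this uniqueness I would argue by contradiction, comparing two distinct minimal states $s$ and $t$ through the arrow reformulation of Section \ref{sec:intro}. Viewing $s$ and $t$ as perfect matchings between non-outer white triangles and non-root vertices, I take their symmetric difference. Because both matchings are perfect on the same two finite sets, this is a disjoint union of alternating cycles, and each such cycle closes up, via the $s$- and $t$-arrows at its vertices, into a closed curve on $S^2$. Since every curve on the sphere separates and the roots carry no arrows, I can select an \emph{innermost} such curve $C$ bounding a disk $D$ that contains no root and no other difference cycle. Then $s$ and $t$ agree on everything lying strictly inside $D$ and disagree only along $C$: any interior difference vertex would lie on a second difference cycle inside $D$, contradicting innermost-ness.

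The heart of the matter — and the step I expect to be the main obstacle — is to extract from $\overline D$ a counter-clockwise empty black triangle of $s$ (or, symmetrically, of $t$), which contradicts minimality and forces $s=t$. The mechanism is a rotation count: as recorded in the proof of Lemma \ref{lem:set_of_moves-->state}, the white triangle matched to a given vertex turns counter-clockwise under a clockwise move and clockwise under a counter-clockwise move, so along $C$ the two minimal states differ by a net rotation of the arrows about their heads that neither state can locally undo. Propagating this rotation inward across the triangles of $\overline D$, and using that $D$ contains no further difference, I would argue that some black triangle of $\overline D$ is empty and oriented counter-clockwise in one of $s,t$; by the Remark following Figure \ref{fig:general_clock} such an innermost separating clock configuration must be a genuine length-$3$ move about a single empty black triangle, hence an admissible counter-clockwise move, the desired contradiction. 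The delicate part is exactly this combinatorial bookkeeping that converts ``net rotation along the innermost $C$'' into an actual empty, oppositely oriented triangle inside; it is precisely here that planarity is indispensable — every cycle separates and a short separating cycle bounds a disk — and it is where the analysis of Section \ref{sec:torus} must, and does, diverge.
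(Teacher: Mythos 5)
Your reduction is sound and in fact mirrors the paper's strategy: Proposition \ref{prop:nocycle} plus Corollary \ref{cor:lattice} dispose of the lattice claim, and connectedness is equivalent to the uniqueness of the locally extremal state (the paper works with the maximum rather than the minimum, but this is only a matter of orientation). The problem is that everything after that reduction is the theorem, and your proposal does not prove it. You explicitly defer ``the main obstacle'' --- converting a net rotation along the innermost difference cycle $C$ into an actual empty, counter-clockwise black triangle in $\overline D$ --- and the hand-waving about ``propagating the rotation inward'' does not survive scrutiny. Note that since $s$ and $t$ agree strictly inside $D$, any empty black triangle strictly inside $D$ is empty with the \emph{same} orientation in both states, so it cannot witness non-minimality of just one of them; the sought triangle must therefore sit along $C$ itself, and producing it requires exactly the kind of delicate local analysis you have omitted. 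There are also unaddressed technical points earlier: difference cycles of the two matchings, realized as curves through arrows, can cross one another geometrically over black triangles even though they are vertex-disjoint, so ``innermost'' needs a more careful definition before you can assert that $D$ contains no vertex where $s$ and $t$ differ.

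For comparison, the paper does not argue via symmetric differences at all. It invokes Tutte's correspondence (Lemma \ref{tuttelemma}) between states and triples of spanning arborescences of $G_R^*$, $G_E^*$, $G_V^*$, defines the clocked arborescence by a greedy algorithm together with the ribbon-structure ordering $<_A$, and proves (Proposition \ref{prop:uniquemax}) that any state whose red arborescence is not the clocked one admits a clockwise move; this is where the real work lives (the choice of the $<_A$-largest skipped edge $\varepsilon$, Lemma \ref{lem:firstedge}, and the argument that the dual path $\Pi$ is a single edge). Uniqueness of the maximum, hence connectedness, then falls out. Your alternative route is not implausible --- it is close in spirit to Kauffman's original exchange argument --- but as written it is a plan with the decisive lemma left unproved, so it cannot be accepted as a proof.
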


But before this let us recall a fundamental fact about planar trinities, Tutte's Tree Trinity Theorem \cite{tutte1}. A \emph{spanning arborescence} in a rooted directed graph is a spanning tree so that all its edges point away from the root. In particular, no edges point to the root and other vertices have a unique edge pointing to them, cf.\ \cite[Lemma 9.8]{hypertutte}.

\begin{thm}[Tutte]
\label{ttt}
The directed graphs $G_R^*$, $G_E^*$ and $G_V^*$ of a planar trinity $\mathcal T$ have the same number $\rho(\mathcal T)$ of spanning arborescences.
\end{thm}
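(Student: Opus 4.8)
The plan is to prove more than the bare statement: I would show that for each color the spanning arborescences of the corresponding directed dual graph are in bijection with the Tutte matchings (states) of $\mathcal T$, so that all three counts equal the number $M$ of states. Since the construction treats the three colors symmetrically, this yields $\rho(\mathcal T)=M$ for $G_R^*$, $G_E^*$, and $G_V^*$ simultaneously, and hence the asserted equality. Let me fix the outer triangle and write $r_0,e_0,v_0$ for its red, green, and blue roots.

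First I would build the forward map, sending a state to its red arrows. Every non-root red vertex is matched to exactly one white triangle (in which it is necessarily the red vertex), and $r_0$ is matched to none; thus the red arrows form a subgraph of $G_R^*$ with in-degree $1$ at each non-root vertex and in-degree $0$ at $r_0$, having $|R|-1$ edges on $|R|$ vertices. The essential point is that this subgraph is \emph{acyclic}, and hence a spanning arborescence rooted at $r_0$. By planar duality this is equivalent to showing that the complementary red edges — those \emph{not} crossed by a red arrow, namely the red edges of the white triangles matched to green, to blue, or outer — form a spanning tree $B$ of the bipartite graph $G_R$ on the green and blue vertices (the edge counts $|R|-1$ and $|E|+|V|-1$ match by the Euler characteristic computation of Section \ref{sec:defs}). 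I expect this acyclicity to be the main obstacle: I would prove it by a planar separation argument, showing that a directed cycle of red arrows would bound a disk whose enclosed red vertices cannot all be matched consistently with the state being a perfect matching; equivalently, that $B$ cannot contain a cycle of green--blue edges surrounding an unmatchable region.

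Next I would construct the inverse. Given a spanning arborescence $A_R$ of $G_R^*$, its arrows match $|R|-1$ white triangles to red vertices, and the complementary red edges form the spanning tree $B$ of $G_R$. Deleting from $B$ the red edge $e_0v_0$ of the outer triangle (which is never matched to red) splits $B$ into two subtrees containing $e_0$ and $v_0$; orienting each subtree away from its root assigns to every non-root green, respectively blue, vertex a unique incident white triangle. This orientation is \emph{forced} by the tree structure, so the green and blue parts of the state are determined, and the whole state is recovered uniquely; one checks it is a genuine state because each white triangle carries exactly one arrow, so no two arrows can cross over a white triangle. This gives the bijection between states and spanning arborescences of $G_R^*$, proving $\rho(\mathcal T)=M$ for the red dual graph. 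Finally, the entire argument is invariant under permuting the three colors — for green one deletes the outer green edge $r_0v_0$ and orients away from $r_0$ and $v_0$, and similarly for blue — so the same bijection holds for $G_E^*$ and $G_V^*$. Hence all three directed dual graphs have exactly $M$ spanning arborescences, which is the theorem, with the planar acyclicity lemma of the second paragraph as the only non-formal ingredient.
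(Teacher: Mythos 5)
Your overall strategy is the one the paper itself points to: Theorem \ref{ttt} is quoted from Tutte, and the paper's only ``proof'' is the bijection summarized in Lemma \ref{tuttelemma} between states and spanning arborescences of each of $G_R^*$, $G_E^*$, $G_V^*$, of which it recalls a single step (the paths $\Pi_x$ to the edge $\kappa$). Your forward map, your inverse (delete $\kappa=e_0v_0$ from the dual spanning tree $B$, orient the two pieces away from $e_0$ and $v_0$, match each non-root green or blue vertex to the white triangle of its unique incoming red edge), and your forcedness argument all agree with that summary, so you are reconstructing the intended proof rather than finding a different one. The in-degree bookkeeping and the edge count $n-(|R|-1)=|E|+|V|-1$ are correct.

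The genuine soft spot is the acyclicity lemma, which you rightly identify as the only non-formal ingredient but sketch in a form that would not close. Counting only the \emph{enclosed red vertices} does not produce a contradiction: if the directed cycle $C$ of red arrows has length $k$ and bounds the disk $D$ away from the outer triangle, with $R_{in}$ red vertices and $f\ge 1$ faces of $G_R^*$ inside, then Euler's formula for $\overline{D}$ gives $q=R_{in}+f-1\ge R_{in}$ dual edges interior to $D$, so there is room to match every enclosed red vertex. The contradiction only appears once you also count the $f$ green and blue vertices inside $D$: each is non-root, hence matched to a white triangle incident to it, and that triangle's red edge (equivalently its dual edge, since white triangles correspond bijectively to dual edges) lies on the boundary of the corresponding face and hence in $\overline{D}$. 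Injectivity of the matching then demands $R_{in}+k+f$ distinct dual edges inside $\overline{D}$, while only $k+q=R_{in}+k+f-1$ exist. So the separation count must run over the enclosed vertices of all three colors (or, in your dual formulation, a cycle of $B$ traps one more vertex than it does red edges). This is a local repair rather than a change of strategy, but as literally stated your ``planar acyclicity lemma'' would fail at exactly the step you single out as the main obstacle.
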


This is in fact true with any choice of roots in the graphs. If we use the vertices of the outer white triangle as roots, a beautiful argument can be made based on Tutte matchings \cite{tutte3}. We summarize it in the following Lemma.

\begin{lemma}[Tutte]
\label{tuttelemma}
In a planar trinity $\mathcal T$, a matching between all non-root vertices of a given color and incident white triangles has at most one extension to a Tutte matching. An extension exists if and only if the arrows representing the partial matching form a spanning arborescence. 
In particular, the number of Tutte matchings in $\mathcal T$ is the \emph{magic number} $\rho(\mathcal T)$ of Theorem \ref{ttt}.
\end{lemma}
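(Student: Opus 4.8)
The plan is to establish the two claims of Lemma \ref{tuttelemma} in sequence, since the final count of Tutte matchings follows by combining them with Tutte's Tree Trinity Theorem (Theorem \ref{ttt}). I will work with a fixed color, say red, and consider a partial matching that assigns to each non-root green or blue vertex an incident white triangle, represented by red arrows as in the Introduction. Recall that red arrows are exactly the edges of the dual graph $G_R^*$, oriented so that each arrow's tail lies in a black triangle and its head terminates at the matched vertex; the head and tail lie in regions of $G_R$ containing the two endpoints of the crossed red edge, and a red arrow crossing a red edge connects the two green/blue region-markers separated by that edge.

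First I would prove uniqueness of the extension. Given the red arrows, the green and blue arrows are forced: at each white triangle not yet claimed by a red arrow, the matching must assign it to either its green or its blue vertex, and I would argue that the `straddling' constraint (no two arrows cross over a white triangle) together with the counting identity $|E|+|V|=n+1$ for planar trinities (the roots excluding three vertices and the outer triangle one white triangle, yielding $n-1$ on each side) leaves no freedom. The cleanest route is to observe that once the red arrows are placed, the remaining white triangles and the remaining non-root vertices (green and blue) must match, and each such white triangle has exactly one green and one blue vertex available; a parity/planarity argument then shows the green--blue assignment is rigid. I expect to phrase this via the induced bipartite structure on the leftover triangles and vertices.

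Next I would characterize existence. The key equivalence is that the red arrows admit an extension to a full Tutte matching if and only if they form a spanning arborescence of $G_R^*$ rooted appropriately. One direction: if the red arrows form a spanning arborescence, then every non-root red-region marker has exactly one outgoing-to-it arrow, no red cycles obstruct the remaining assignment, and the complementary green/blue matching can be completed. The reverse direction uses that a Tutte matching, restricted to red arrows, hits each non-root green and blue vertex exactly once and leaves the roots untouched, which is precisely the defining property of a spanning arborescence (each non-root vertex has a unique incoming edge, the root has none, and there are no cycles because a red cycle would enclose a region forcing a contradiction with the matching of interior white triangles). The main obstacle will be showing the \emph{acyclicity} half rigorously: I expect to argue that a cycle among red arrows would bound a disk on $S^2$ whose interior white triangles cannot all be consistently matched, contradicting the matching property --- this is where planarity is genuinely used.

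Finally, the count follows immediately: by the uniqueness and existence results, Tutte matchings of $\mathcal T$ are in bijection with spanning arborescences of $G_R^*$, so their number is the number of such arborescences, which by Theorem \ref{ttt} equals the common value $\rho(\mathcal T)$ independent of color. I would note that running the same argument with green or blue arrows instead yields the color-independence directly, reproving the symmetry asserted in Theorem \ref{ttt} as a byproduct.
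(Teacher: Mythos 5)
Your outline follows the same architecture as Tutte's argument that the paper summarizes: fix one color, observe that the corresponding arrows must form a spanning arborescence of $G_R^*$, show that such an arborescence forces the rest of the matching, and deduce the count from Theorem \ref{ttt}. But the two substantive claims are left as declarations of intent rather than proofs, and the single idea that does the work --- the one step of Tutte's proof that the paper explicitly records --- is absent: planar duality. White triangles correspond bijectively to red edges, and the white triangles \emph{not} claimed by the red arrows correspond to the subgraph $\Gamma\subset G_R$ dual to the complement of $A$; by planarity, $A$ is a spanning tree of $G_R^*$ exactly when $\Gamma$ is a spanning tree of $G_R$, and $\Gamma$ automatically contains the edge $\kappa$ joining the green and blue roots (its white triangle is the outer one). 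Completing the matching amounts to orienting the $|E|+|V|-2$ edges of $\Gamma\setminus\{\kappa\}$ so that each of the $|E|+|V|-2$ non-root green and blue vertices has in-degree one; since $\Gamma$ is a tree, the only such orientation points every edge away from $\kappa$, i.e.\ it matches $x$ to the white triangle adjacent to the first edge of the unique path $\Pi_x\subset\Gamma$ from $x$ to $\kappa$. This one construction delivers uniqueness and existence simultaneously; your ``parity/planarity argument'' and ``the complementary green/blue matching can be completed'' are placeholders for exactly this.

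The converse (no extension when $A$ has a cycle) is also genuinely harder than your sketch suggests. Counting vertices against edges does not close the argument: a rootless \emph{unicyclic} component of $\Gamma$ has equally many edges and vertices and so passes the naive in-degree count (orient the cycle cyclically and the attached trees outward). The contradiction is topological: for the disk $D$ bounded by the cycle of such a component, an Euler-characteristic computation shows that the number of boundary red edges whose white triangle lies in $D$ exceeds by exactly one the number of boundary vertices that can be matched into $D$, which is incompatible with the forced cyclic orientation. You correctly flag this as ``the main obstacle'' but do not overcome it. Two smaller errors: the identity $|E|+|V|=n+1$ is false (the correct relation is $|E|+|V|+|R|=n+2$, which is what yields $n-1$ non-root vertices against $n-1$ non-outer white triangles); and the red arrows live in $G_R^*$ and point to \emph{red} vertices, not to ``each non-root green and blue vertex'' --- the in-degree conditions of an arborescence are automatic from the definition of the partial matching, so the entire content of the ``only if'' direction is acyclicity. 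Note finally that the paper does not reprove this lemma but attributes it to Tutte and records only the construction above, so a self-contained proof would have to carry out both duality arguments in full.
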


Let us recall just one step in the proof. A spanning arborescence $A\subset G_R^*$ is in particular a spanning tree, and hence so is its dual subgraph $\Gamma\subset G_R$. The latter always contains the edge $\kappa$ between the blue and green roots and starting from any vertex $x$ of $G_R$ there is a unique path $\Pi_x\subset\Gamma$ that leads to $\kappa$. (To be exact, let us say that $\Pi_x$ ends at an endpoint of $\kappa$ but it does not contain $\kappa$.) In the extension of $A$ to a Tutte matching, the white triangle matched to a blue or green vertex $x$ is the one adjacent to the first edge of $\Pi_x$.

By Lemma \ref{tuttelemma}, states and (triples of) spanning arborescences are synonymous in planar trinities. 
Starting from any state, one may perform clockwise moves until one runs out of options. A key observation 
is that the state at this point is uniquely determined by the trinity, given by a concrete construction. The construction appeared in \cite{hitoshi} where, following Kauffman, it was called the \emph{clocked arborescence}. More precisely, all three of $G_R^*$, $G_E^*$, and $G_V^*$ contain unique clocked arborescences and it turns out (Corollary \ref{cor:clockedstate}) that they fit together in the same \emph{clocked state} in the sense of Lemma \ref{tuttelemma}. It then immediately follows that the state transition graph of any planar trinity is connected, with a unique maximum (the clocked state) and, by symmetry, a unique minimum. 

The clocked arborescence is built by a greedy algorithm where one uses counter-clockwise turns at vertices (starting at the root and at the green side of the outer white triangle\footnote{The blue side is more practical, but this is consistent with what we will do later.}) to search for arrows (necessarily pointing away from the vertex) to be consecutively added to the arborescence. When a suitable arrow is found, the search continues at its terminal point. If a full turn has been performed at a vertex, the algorithm backtracks to the preceding vertex. 
This is equivalent to saying that after selecting each arrow, we return to the root and re-start the search, tracing counter-clockwise the perimeter of the subgraph selected thus far, until we meet the tail end of the first arrow that can be added without creating a cycle. 
See Figure \ref{fig:transition} for an example and \cite[Section 2]{hitoshi} for full details.

Let us point out the subtlety that the construction of the clocked arborescence operates at the tail ends of arrows, whereas clock moves are rotations of arrows about their heads. 

The idea of being built up from the root by adding on arrows one by one applies not just to the clocked one but to all (spanning) arborescences, at least as long as the rooted directed graph has a so-called ribbon structure. (Our directed graphs inherit ribbon structures from the sphere.) More precisely, for a (spanning) arborescence $A$, an ordering of its edges can be described as follows. We fix a sufficiently small neighborhood $N$ of $A$. Starting from its point along the green edge of the outer white triangle, we trace the boundary $\partial N$ in the counter-clockwise direction. Let us denote this closed path by $\Omega_A$. We number the edges of $A$ in the order of the first time $\Omega_A$ travels along (parallel to) them. Let us denote this ordering of the edges of $A$ by $<_A$. 

It is useful to extend $<_A$ to non-edges of $A$, too. (This idea is taken from Bernardi \cite{bernardi}.) If $A$ is indeed spanning, then as we traverse $\Omega_A$, we meet both ends of each non-edge of $A$ exactly once. We repeat our numbering above but this time we also include in our timeline the events when $\Omega_A$ crosses over 
the first of the two ends of a non-edge. 

In terms of this extended ordering, the clocked arborescence (among all spanning arborescences $A$) is characterized by the property that for each non-root vertex $y$ (of the same color as $A$), the edge of $A$ pointing to $y$ is smallest, with respect to $<_A$, in the set of the edges of the directed graph that terminate at $y$. (To put it simply, the clocked arborescence is indeed greedy: it grabs each vertex the first time it sees it. Other arborescences hesitate to do so in the case of at least one vertex.)

For the proof of our next proposition it will be important to pay attention to when, during the course of the construction of the arborescence $A\subset G_R^*$, the path $\Pi_x\subset G_R$ from $x$ to $\kappa$ (and hence its first edge and hence the white triangle matched to $x$) get decided. Here $x$ is a blue or a green vertex.

For this let us consider the process from yet another angle. At the beginning we have the entire graph $G_R^*$ (primal) on the one hand, and isolated green and blue vertices (dual) on the other. Next we remove the edge $\kappa^*$ (pointing to the red root across the outer white triangle) from $G_R^*$ and add $\kappa$ to the dual, thereby connecting the green and blue roots. (Notice that $\kappa^*$ is always the smallest edge in the extended ordering $<_A$.) Then we continue our walk along $\Omega_A$ and every time we cross over the first of the two ends of an edge $\varepsilon\in G_R^*\setminus A$, we remove it from the primal subgraph and add the dual edge $\varepsilon^*$ to the dual one. A trivial induction proof shows that this dual subgraph, at every stage starting from the second, has only one connected component that is not an isolated point. Since at the end of the process the primal subgraph is $A$ and the dual one is the spanning tree $\Gamma$, it follows that the dual subgraph is also cycle-free, that is a tree (plus isolated points), at every stage.

With this we see that if $\Omega_A$ crosses $\varepsilon$ for the first time near its tail, then a blue leaf gets added on to the dual subgraph; if it is near the head, then a green leaf. Indeed, the region of $G_R^*$ that $\Omega_A$ has just entered contains a blue or green vertex and this vertex must have been isolated before because otherwise we would have just created a cycle in the dual subgraph. We also see that (except for the initial edge $\varepsilon=\kappa^*$) the path $\Pi_x$ for the newly added leaf $x$ starts with $\varepsilon^*$; this is because such a path from $x$ to $\kappa$ exists in the current dual subgraph and hence in $\Gamma$, too. In particular, we have shown the following.

\begin{lemma}
\label{lem:firstedge}
For every edge $\varepsilon\in G_R^*\setminus A\setminus\{\kappa^*\}$, which has a green vertex $e$ to its right and a blue vertex $v$ to its left, if $\Omega_A$ crosses the tail end of $\varepsilon$ first, then $\varepsilon^*$ is the first edge along the path $\Pi_v$. If $\Omega_A$ crosses $\varepsilon$ near its head first, then $\varepsilon^*$ becomes the first edge along the path $\Pi_e$.
\end{lemma}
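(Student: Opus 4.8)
The plan is to follow the dynamic bookkeeping set up just above the statement, reading the claim off the geometry of the boundary walk $\Omega_A$. The key structural input is tree--cotree duality: because $A$ is a spanning tree of $G_R^*$, the duals $\{\varepsilon^*:\varepsilon\in G_R^*\setminus A\}$ of its non-edges are exactly the edges of the spanning tree $\Gamma\subset G_R$. I would therefore watch $\Gamma$ assemble itself as $\Omega_A$ is traversed counter-clockwise: start from $\kappa$ (the dual of the smallest edge $\kappa^*$) and adjoin $\varepsilon^*$ at the instant $\Omega_A$ first crosses the non-edge $\varepsilon$. With this in place the whole lemma reduces to understanding a single first-crossing.

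First I would establish the invariant that the growing dual subgraph is, at each stage past the first, one nontrivial tree together with isolated points. Acyclicity is free, since the subgraph sits inside the tree $\Gamma$; the induction then only has to rule out the creation of a second nontrivial component. This follows because crossing $\varepsilon$ for the first time carries $\Omega_A$ across $\varepsilon^*$ into a face of $G_R^*$ whose green-or-blue vertex is being reached for the first time --- were it already present, adjoining $\varepsilon^*$ would close a cycle --- while the other endpoint of $\varepsilon^*$ sits in the face $\Omega_A$ is leaving, which was entered earlier and is thus already in the tree. Hence every first-crossing is a leaf addition, attaching a new vertex $x$ by the single edge $\varepsilon^*$. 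At that moment $\varepsilon^*$ begins a path from $x$ to $\kappa$ inside the current subtree, and since paths in a tree are unique this is the initial segment of $\Pi_x$ in the final $\Gamma$; edges later attached at $x$ only grow the tree away from $\kappa$ and do not disturb this.

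The remaining, and genuinely delicate, point --- the real content of the lemma --- is to decide \emph{which} endpoint of $\varepsilon^*$ is the new leaf: the green $e$ on the right of $\varepsilon$ or the blue $v$ on its left. I would settle this locally, orienting $\varepsilon$ from its tail (in the incident black triangle) toward its head, so that $e$ lies due east and $v$ due west. Because $\Omega_A$ bounds a neighborhood of $A$ traced counter-clockwise (neighborhood on the left), it sweeps the sectors around each vertex of $A$ in order of increasing angle, crossing each absent edge from its clockwise side to its counter-clockwise side. At the tail the missing edge $\varepsilon$ points outward, so this crossing runs from the east (green) side to the west (blue) side and $\Omega_A$ enters $v$'s face, giving $\varepsilon^*$ as the first edge of $\Pi_v$; at the head $\varepsilon$ points inward, the crossing runs from west to east, $\Omega_A$ enters $e$'s face, and $\varepsilon^*$ begins $\Pi_e$. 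I expect the main obstacle to be exactly this bookkeeping of orientations: one must fix the black-triangle-tail convention for $\varepsilon$ and the counter-clockwise, neighborhood-on-the-left convention for $\Omega_A$ and then verify, once and carefully (e.g.\ at a single through-vertex), that the local sweep direction is as claimed. Once that sweep direction is pinned down the two cases are mirror images and the lemma follows.
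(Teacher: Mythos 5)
Your argument is correct and follows essentially the same route as the paper: the paper establishes this lemma via the same dynamic primal--dual exchange along $\Omega_A$, the same induction showing each first crossing attaches an isolated vertex as a leaf of the growing subtree of $\Gamma$ (acyclicity being inherited from $\Gamma$), and the same observation that the unique path to $\kappa$ then begins with $\varepsilon^*$. Your final orientation computation deciding that the tail-end crossing enters the blue face and the head-end crossing the green face is exactly the step the paper leaves implicit, and you have carried it out correctly under the stated conventions.
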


The most important technical result of this section is the following.

\begin{prop}
\label{prop:uniquemax}
If a state is such that one of its constituent spanning arborescences is not the clocked one for the respective directed graph, then it admits a clockwise move.
\end{prop}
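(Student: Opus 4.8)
The plan is to reduce everything to a single arborescence and then to extract one empty clockwise black triangle directly from the failure of greediness. First I would use Tutte's Lemma~\ref{tuttelemma}: since a state is determined by any one of its three constituent arborescences, I may assume that it is the red arborescence $A=A_R\subset G_R^*$ that is not clocked (the other colors are symmetric). Recall the characterization stated above: $A$ is clocked exactly when, for every non-root red vertex $y$, the edge $\alpha_y\in A$ into $y$ is $<_A$-smallest among all edges of $G_R^*$ into $y$. As $A$ is not clocked, the set of \emph{witnesses}---non-edges $\varepsilon\in G_R^*\setminus A$ whose head $y$ satisfies $\varepsilon<_A\alpha_y$---is nonempty, and I would fix the $<_A$-smallest witness $\varepsilon$, with head $y$.

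Next I would locate the triangle to be changed. Both $\alpha_y$ and $\varepsilon$ cross the red edges of white triangles incident to $y$; call these $W_\alpha$ (matched to $y$, since $\alpha_y\in A$) and $W_\varepsilon$. The inequality $\varepsilon<_A\alpha_y$ should say, through the ribbon structure defining $<_A$, that the counter-clockwise tour $\Omega_A$ meets $W_\varepsilon$ before $W_\alpha$ as it winds about $y$. I would then let $\Delta$ be the black triangle incident to $y$ that is rotationally adjacent to $W_\alpha$ on the side toward $W_\varepsilon$. By construction $y$ is the red vertex of $\Delta$ and its neighbor $W_\alpha$ is matched to $y$; this already fixes one of the three arrows of a prospective move and fixes its sense, namely a clockwise move would rotate the arrow into $y$ off $W_\alpha$ and toward $W_\varepsilon$.

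It then remains to show that $\Delta$ is empty, i.e.\ that none of its three vertices carries an arrow tail. One tail is excluded immediately: the neighbor $W_\alpha$ of $\Delta$ shares an edge with $\Delta$ at $y$ and is matched to $y$ rather than to its far vertex, so the corresponding tail is absent. For the other two I would convert the matches of the green and blue neighbors of $\Delta$ into statements about non-edges via Lemma~\ref{lem:firstedge}, which is precisely the bridge between the tail-based construction of the arborescence and the head-based clock moves. The claim is that a tail at either remaining vertex would force a neighboring white triangle to be matched inward, and tracing this back through Lemma~\ref{lem:firstedge} would produce a witness to non-clockedness strictly $<_A$-smaller than $\varepsilon$, contradicting minimality. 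Hence $\Delta$ is empty, and by the orientation fixed above its forced matching is clockwise, giving the desired move.

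The hard part will be the bookkeeping that links the global linear order $<_A$ (equivalently the tour $\Omega_A$) to the local cyclic arrangement of triangles around $y$, together with the case-by-case verification, via Lemma~\ref{lem:firstedge}, that each hypothetical tail in $\Delta$ manufactures a strictly smaller witness. Keeping the clockwise versus counter-clockwise sense consistent with the left/right conventions of Lemma~\ref{lem:firstedge}---and with the tail-versus-head subtlety flagged just before it---is where the argument is most delicate, and it is the step that I expect to lean essentially on the ribbon (Bernardi) ordering.
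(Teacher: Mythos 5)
Your plan has the right skeleton --- use the greedy characterization of the clocked arborescence, pick an extremal violating non-edge $\varepsilon$, locate a black triangle at its head, and invoke Lemma~\ref{lem:firstedge} to control the matches of the other two vertices --- but there are two genuine gaps. First, you take the $<_A$-\emph{smallest} witness, while the argument that actually closes needs the \emph{largest} one. The reason is geometric: writing $r$ for the head of $\varepsilon$, $\rho$ for the $A$-edge into $r$, and forming the cycle $\Theta$ out of the two trinity edges at $r$ together with the path $\Pi$ in the dual tree $\Gamma$ joining the adjacent blue and green vertices, one gets a disk $D$ avoiding $\varepsilon$ and $\rho$; every vertex inside $D$ is attached to $A$ only after $\rho$ is selected, so any skipped edge produced by an intermediate edge of $\Pi$ is necessarily $<_A$-\emph{larger} than $\varepsilon$. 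That contradicts maximality and forces $\Pi$ to be a single edge, i.e.\ forces $D$ to be the desired empty black triangle. With the minimal witness the same configuration yields no contradiction at all: $D$ may legitimately contain further (larger) skipped edges, in which case the black triangle sitting in the corner of $D$ at $y$ is not empty, and your $\Delta$ fails. You would need a genuinely different mechanism to manufacture a \emph{smaller} witness, and none is indicated.

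Second, the reading of $\varepsilon<_A\alpha_y$ as ``$\Omega_A$ meets $W_\varepsilon$ before $W_\alpha$ as it winds about $y$'' is not what the extended Bernardi order says. Since the head end of any non-edge into $y$ can only be crossed after $\Omega_A$ has already traveled along $\alpha_y$, the inequality $\varepsilon<_A\alpha_y$ is equivalent to $\Omega_A$ crossing the \emph{tail} end of $\varepsilon$ first, at a location that may be far from $y$; it carries no direct information about the cyclic position of $W_\varepsilon$ relative to $W_\alpha$ around $y$. Consequently ``the side toward $W_\varepsilon$'' is not well defined when $W_\varepsilon$ is not rotationally adjacent to $W_\alpha$, and the claim that the resulting move is clockwise is not established. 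Finally, the verification that $\Delta$ is empty --- which is the entire content of Proposition~\ref{prop:uniquemax} --- is deferred in your last paragraph; in the paper this is exactly where the dual-tree path $\Pi$, the disk $D$, and the ``no intermediate edge of $\Pi$'' argument do all the work, so as written the proposal is a plausible outline rather than a proof.
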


\begin{cor}
\label{cor:clockedstate}
For any planar trinity, the union of the clocked arborescences of the graphs $G_R^*$, $G_E^*$, and $G_V^*$, all three viewed as a matching between white triangles and vertices of the respective color, is a Tutte matching.
\end{cor}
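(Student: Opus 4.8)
The plan is to realize the union of the three clocked arborescences as an honest state, by identifying it with a local maximum of the state transition graph. First I would secure the existence of such a maximum. By Proposition \ref{prop:nocycle} every component of the state transition graph of a planar trinity is acyclic, so no non-trivial sequence of clockwise moves can return to its starting state; since there are only finitely many states, any maximal sequence of clockwise moves (started from an arbitrary state) must terminate at a state $M$ that admits no further clockwise move. Thus a local maximum $M$ exists.

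Next I would decompose $M$ into its three monochromatic pieces. Restricting the Tutte matching $M$ to the non-root vertices of a single color and reading off the corresponding arrows gives, by Lemma \ref{tuttelemma}, a spanning arborescence in each of the three directed graphs: $A_R\subset G_R^*$, $A_E\subset G_E^*$, and $A_V\subset G_V^*$. Because $M$ admits no clockwise move, the contrapositive of Proposition \ref{prop:uniquemax} applies and forces \emph{each} of $A_R$, $A_E$, $A_V$ to be the clocked arborescence of its respective graph. Consequently the union of the three clocked arborescences, viewed as a matching between white triangles and vertices, is exactly $M$, which is a Tutte matching. This is the content of the corollary. (As a bonus, any two local maxima would have the same three clocked arborescences and hence coincide, so $M$ is in fact unique, but the corollary requires only existence.)

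Since the clocked arborescence of $G_X^*$ is a canonical object attached to that single directed graph, there is no a priori reason the three of them should be mutually compatible --- that is, that no non-outer white triangle should be claimed by arrows of two different colors at once. This compatibility is the entire substance of the statement, and I expect the main obstacle to have already been overcome inside Proposition \ref{prop:uniquemax}: its function is precisely to certify that a state with no available clockwise move must carry the clocked arborescence in all three colors simultaneously, which is exactly what lets the three be assembled into one Tutte matching. Granting that proposition, the only remaining ingredients --- termination of clockwise moves (from acyclicity and finiteness) and the colorwise decomposition of a state via Lemma \ref{tuttelemma} --- are routine.
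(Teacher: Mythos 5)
Your argument is correct and is essentially the paper's own proof: both obtain a local maximum from acyclicity (Proposition \ref{prop:nocycle}) together with finiteness, and then invoke the contrapositive of Proposition \ref{prop:uniquemax} to conclude that this maximum consists of the three clocked arborescences. The only detail the paper makes explicit that you leave implicit is that at least one state exists from which to start the clockwise sequence --- this follows from Lemma \ref{tuttelemma}, since arborescences (for example the clocked one) exist.
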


\begin{proof}
As arborescences exist (for example the clocked one), states exist by Lemma \ref{tuttelemma}. Then from Proposition \ref{prop:nocycle} we know that a state without clockwise moves exists. Finally by Proposition \ref{prop:uniquemax}, such a state has to consist of the three clocked arborescences. 
\end{proof}

The state of the three clocked arborescences 
then is the unique maximum of our state transition graph and thus finishes the proof of Theorem \ref{thm:plane}.

\begin{proof}[Proof of Proposition \ref{prop:uniquemax}]
Let us consider a state $s$ of our planar trinity $\mathcal T$ in which the arrows pointing to red vertices form a spanning arborescence $A$ of the directed graph $G_R^*$ that is different from the clocked arborescence. This means that edges $\varepsilon$ exist so that $\varepsilon\not\in A$ and the edge $\rho$ of $A$ that shares terminal points with $\varepsilon$ satisfies $\varepsilon<_A\rho$. Let us choose the largest one, with respect to $<_A$, among such edges and denote it by $\varepsilon$. Let $\rho$ be the edge of $A$ with the same terminal point $r$ as $\varepsilon$. See Figure \ref{fig:move_exists}. Since $\varepsilon<_A\rho$, when $\Omega_A$ first meets $\varepsilon$, it has not yet traveled along $\rho$ and thus $r$ has not been reached yet; therefore $\Omega_A$ will first reach the tail end of $\varepsilon$. (Informally, $\varepsilon$ is the last edge that gets `skipped over' when we build up $A$; after that $A$ is constructed in the greedy way, including the time of the selection of $\rho$.)

\begin{figure} [h]
\begin{tikzpicture}[scale=.33] 

\path [fill=lightgray] (0,0) to [out=120,in=-60] (-6,2) -- (-6,6) to [out=90,in=-90] (1,8) -- (5,5) -- (0,0);

\draw [red,ultra thick] (0,0) -- (10,0);
\draw [red,ultra thick,dotted] (0,0) to [out=120,in=-60] (-6,2);
\draw [red,ultra thick] (-6,2) -- (-6,6);
\draw [red,ultra thick,dotted] (-6,6) to [out=90,in=-90] (1,8);
\draw [red] (1,8) -- (5,10);
\draw [green,ultra thick] (0,0) -- (5,5);
\draw [blue,ultra thick] (1,8) -- (5,5);

\draw [->,red,dashed] (5,-5) -- (5,2);
\draw [red,dashed] (5,2) -- (5,5);
\draw [->,red,ultra thick,dashed] (1,13) -- (4,7);
\draw [red,ultra thick,dashed] (4,7) -- (5,5);
\draw [->,red,dashed] (-9,4) -- (-5,4);
\draw [red,dashed] (-5,4) -- (-3,4);

\draw [fill=red,red] (5, -5) circle [radius=0.3];
\draw [fill=red,red] (5, 5) circle [radius=0.3];
\draw [fill=red,red] (1, 13) circle [radius=0.3];
\draw [fill=red,red] (-9, 4) circle [radius=0.3];
\draw [fill=red,red] (-3, 4) circle [radius=0.3];
\draw [fill=green,green] (10, 0) circle [radius=0.3];
\draw [fill=green,green] (-6, 2) circle [radius=0.3];
\draw [fill=green,green] (1, 8) circle [radius=0.3];
\draw [fill=blue,blue] (0, 0) circle [radius=0.3];
\draw [fill=blue,blue] (-6, 6) circle [radius=0.3];
\draw [fill=blue,blue] (5, 10) circle [radius=0.3];

\node [right] at (5.3,5) {$r$};
\node [below] at (0,-.3) {$v$};
\node [below left] at (-6.1,1.9) {$e'$};
\node [left] at (-6.3,6) {$v'$};
\node [above left] at (.9,8.1) {$e$};
\node [right] at (-2.7,4) {$r'$};
\node [left] at (.7,13) {$q$};
\node [right] at (5,-3) {$\varepsilon$};
\node [above] at (8,0) {$\varepsilon^*$};
\node [above right] at (2,11) {$\rho$};
\node [below] at (-8,4) {$\varepsilon'$};
\node at (1,5) {$D$};

\end{tikzpicture}
    \caption{Finding a clockwise empty black triangle.}
    \label{fig:move_exists}
\end{figure}

Let $v$ be the blue vertex of $\mathcal T$ to the left of $\varepsilon$ and let $e$ be the green vertex to the right of $\rho$. These are both adjacent to $r$. As $A\subset G_R^*$ is a spanning tree, so is its dual subgraph $\Gamma\subset G_R$. In particular, $\Gamma$ contains a unique path $\Pi$ between $v$ and $e$. Let $\Pi$ and the edges $er$ and $rv$ form the cycle $\Theta$. As $\Pi\subset\Gamma$, edges of $A$ may not cross it; $A$ may only cross $\Theta$ at $r$.

The path $\Pi$ does not contain $\varepsilon^*$ because then $\Theta$ would separate the initial point $q$ of $\rho$ from the red root, making it necessary for a directed path to exist in $A$ from $r$ to $q$, which with $\rho$ would make a cycle in $A$.
Hence $\Theta$ surrounds a region $D$ of the sphere which does not contain $\varepsilon$ and $\rho$ (and hence it does not contain the red root and the outer white triangle either, so it appears in our projection as a bounded planar region).

We claim that $\Pi$ has to be a single edge. If it is not then there exists an edge along it, connecting the green vertex $e'$ and the blue vertex $v'$, so that these follow each other in the order $v,e',v',e$ along $\Pi$. The edge $e'v'$ is incident to a unique white triangle which is contained in $D$ and whose third (red) vertex will be denoted with $r'$. (It is possible that $r'=r$.) Let $\varepsilon'$ be the edge of $G_R^*$ that is dual to $e'v'$. 

By Lemma \ref{lem:firstedge}, the first edge of the path $\Pi_v$ is $\varepsilon^*$. We also see that $\Pi_{e'}$ and $\Pi_{v'}$ (as well as $\Pi_e$) follow $\Pi$ to $v$ and then travel along $\Pi_v$. To be consistent with Lemma \ref{lem:firstedge}, this is only possible if $\Omega_A$ first crosses $\varepsilon'$ at its tail end. It also implies that $\varepsilon<_A\varepsilon'$: indeed if $\varepsilon'$ came first in the order, then $v$ would still be an isolated point at the time of the crossing of $\varepsilon'$ by $\Omega_A$, making it impossible for $\Pi_{v'}$ to pass through $v$. To arrive at the desired contradiction (with the assumption that $\Pi$ is longer than one edge), we will argue that at the time of the first crossing of $\varepsilon'$ by $\Omega_A$, the point $r'$ has not yet been visited. That is, $\varepsilon'$ is also a `skipped edge,' which is not in line with the way we chose $\varepsilon$.

Indeed the first time that $r'$ is reached by an edge of $A$ is not before $r$ is reached by $\rho$ (recall that $A$ may enter $D$ only through $r$). At the time when $\Omega_A$ travels along $\rho$, the vertex $e$ cannot be isolated any more in the dual subgraph of the discussion before Lemma \ref{lem:firstedge}, hence $\Pi_e$ is already part of the dual subgraph, but that implies that $v'$ is not isolated either, meaning that the first crossing of $\varepsilon'$ by $\Omega_A$ happened even earlier.

Finally, we claim that the state $s$ admits a clockwise move about the disk $D$. This is clear from the presence of $\rho$ in $A$, Tutte's description of the extension of $A$ to a state (given after Lemma \ref{tuttelemma}), and the facts that $\Pi_v$ starts with $\varepsilon^*$ and that $\Pi_e$ starts with the unique edge of $\Pi$.
\end{proof}

\begin{ex}
\input{transition.tex}

Figure \ref{fig:transition} shows all states of a planar trinity and their state transition graph (distributive lattice). Clockwise empty black triangles are marked with a full dot and counter-clockwise ones are marked with a hollow dot. 
\end{ex}

\begin{rmk}
For a planar trinity $\mathcal T$, the magic number $\rho(\mathcal T)$ 
gives the cardinality of at least three 
structures: 
the distributive lattice of Theorem \ref{thm:plane}, the five isomorphic abelian groups studied in \cite{bm} (three of which are the sandpile groups 
of $G_R^*$, $G_E^*$, and $G_V^*$), and the set of bases for any of the six hypergraphical polymatroids induced by $\mathcal T$ \cite{hypertutte}. The interplay between these structures is not well understood.
\end{rmk}

\section{Toric trinities}
\label{sec:torus}

In this section we will assume that the trinity triangulates the torus. For the most part we also assume that it is irreducible in the sense of subsection \ref{sec:decomp}, although this is only strictly necessary for Proposition \ref{prop:same_turns}, Corollary \ref{reachability}, and Theorem \ref{thm:one_turn}. 

Recall that in Section \ref{sec:acyclic} we distinguished between cyclic and acyclic components of the state transition graph. Isolated or `fixed' points (states without allowed moves) are considered acyclic components. In contrast to the planar case, state transition graphs of toric trinities need not be connected. In fact, it is possible for both cyclic and acyclic components 
to arise from a single trinity, as seen in the following example.

\begin{ex}
\begin{figure} [h]
\begin{tikzpicture}[scale=.66] 
\path [fill=lightgray] (0,0) -- (2,0) -- (2,2);
\path [fill=lightgray] (0,4) -- (0,2) -- (2,2);
\path [fill=lightgray] (2,2) -- (2,4) -- (4,2);
\path [fill=lightgray] (2,2) -- (4,0) -- (6,0);
\path [fill=lightgray] (2,4) -- (4,4) -- (6,2);
\path [fill=lightgray] (6,0) -- (6,2) -- (4,2);
\path [fill=lightgray] (6,4) -- (6,2) -- (8,4);
\path [fill=lightgray] (6,2) -- (8,2) -- (8,0);
\draw [ultra thick, red] (0,2) -- (2,2);
\draw [ultra thick, red] (6,2) -- (8,2);
\draw [ultra thick, red] (2,4) -- (2,0);
\draw [ultra thick, red] (6,4) -- (6,0);
\draw [ultra thick, red] (2,4) -- (6,2);
\draw [ultra thick, red] (2,2) -- (6,0);
\draw [ultra thick, green] (0,4) -- (2,2);
\draw [ultra thick, green] (0,0) -- (2,2);
\draw [ultra thick, green] (8,4) -- (6,2);
\draw [ultra thick, green] (8,0) -- (6,2);
\draw [ultra thick, green] (4,4) -- (6,2);
\draw [ultra thick, green] (2,2) -- (4,0);
\draw [ultra thick, green] (6,2) -- (2,2);
\draw [ultra thick, blue] (0,0) -- (0,4) -- (8,4) -- (8,0) -- cycle;
\draw [ultra thick, blue] (2,4) -- (4,2);
\draw [ultra thick, blue] (4,2) -- (6,0);
\draw [->,ultra thick,red,dashed] (0,4) to [out=-60,in=80] (0.6,1.4);
\draw [ultra thick,red,dashed] (0.6,1.4) to [out=260,in=60] (0,0);
\draw [->,ultra thick,red,dashed] (0,0) to [out=30,in=170] (2.6,0.6); 
\draw [ultra thick,red,dashed] (2.6,0.6) to [out=-10,in=150] (4,0);
\draw [->,ultra thick,red,dashed] (4,4) -- (4,2.5);
\draw [ultra thick,red,dashed] (4,2.5) -- (4,2);
\draw [ultra thick,blue,dashed] (2,2) to [out=-35,in=130] (4.7,0);
\draw [->,ultra thick,blue,dashed] (4.7,4) to [out=-50,in=125] (5.3,3.3); 
\draw [ultra thick,blue,dashed] (5.3,3.3) to [out=-55,in=115] (6,2);
\draw [->,ultra thick,blue,dashed] (6,2) to [out=220,in=-10] (3.6,1.6);
\draw [ultra thick,blue,dashed] (3.6,1.6) to [out=170,in=-15] (2,2);
\draw [->,ultra thick,green,dashed] (0,2) -- (1.4,3.4);
\draw [ultra thick,green,dashed] (1.4,3.4) -- (2,4);
\draw [->,ultra thick,green,dashed] (6,4) -- (7.4,2.6); 
\draw [ultra thick,green,dashed] (7.4,2.6) -- (8,2);
\draw [->,ultra thick,green,dashed] (8,2) -- (6.6,0.6);
\draw [ultra thick,green,dashed] (6.6,0.6) -- (6,0);
\draw [fill=red,red] (0, 0) circle [radius=0.15];
\draw [fill=red,red] (0, 4) circle [radius=0.15];
\draw [fill=red,red] (4, 0) circle [radius=0.15];
\draw [fill=red,red] (4, 2) circle [radius=0.15];
\draw [fill=red,red] (4, 4) circle [radius=0.15];
\draw [fill=red,red] (8, 0) circle [radius=0.15];
\draw [fill=red,red] (8, 4) circle [radius=0.15];
\draw [fill=green,green] (0, 2) circle [radius=0.15];
\draw [fill=green,green] (2, 0) circle [radius=0.15];
\draw [fill=green,green] (2, 4) circle [radius=0.15];
\draw [fill=green,green] (6, 0) circle [radius=0.15];
\draw [fill=green,green] (6, 4) circle [radius=0.15];
\draw [fill=green,green] (8, 2) circle [radius=0.15];
\draw [fill=blue,blue] (2, 2) circle [radius=0.15];
\draw [fill=blue,blue] (6, 2) circle [radius=0.15];

\path [fill=lightgray] (10,0) -- (12,0) -- (12,2);
\path [fill=lightgray] (10,4) -- (10,2) -- (12,2);
\path [fill=lightgray] (12,2) -- (12,4) -- (14,2);
\path [fill=lightgray] (12,2) -- (14,0) -- (16,0);
\path [fill=lightgray] (12,4) -- (14,4) -- (16,2);
\path [fill=lightgray] (16,0) -- (16,2) -- (14,2);
\path [fill=lightgray] (16,4) -- (16,2) -- (18,4);
\path [fill=lightgray] (16,2) -- (18,2) -- (18,0);
\draw [ultra thick, red] (10,2) -- (12,2);
\draw [ultra thick, red] (16,2) -- (18,2);
\draw [ultra thick, red] (12,4) -- (12,0);
\draw [ultra thick, red] (16,4) -- (16,0);
\draw [ultra thick, red] (12,4) -- (16,2);
\draw [ultra thick, red] (12,2) -- (16,0);
\draw [ultra thick, green] (10,4) -- (12,2);
\draw [ultra thick, green] (10,0) -- (12,2);
\draw [ultra thick, green] (18,4) -- (16,2);
\draw [ultra thick, green] (18,0) -- (16,2);
\draw [ultra thick, green] (14,4) -- (16,2);
\draw [ultra thick, green] (12,2) -- (14,0);
\draw [ultra thick, green] (16,2) -- (12,2);
\draw [ultra thick, blue] (10,0) -- (10,4) -- (18,4) -- (18,0) -- cycle;
\draw [ultra thick, blue] (12,4) -- (14,2);
\draw [ultra thick, blue] (14,2) -- (16,0);
\draw [ultra thick,blue,dashed] (16,2) to [out=-25,in=180] (18,1.2);
\draw [->,ultra thick,blue,dashed] (10,1.2) to [out=0,in=190] (10.7,1.3);
\draw [ultra thick,blue,dashed] (10.7,1.3) to [out=10,in=205] (12,2);
\draw [->,ultra thick,blue,dashed] (12,2) to [out=40,in=170] (14.4,2.4);
\draw [ultra thick,blue,dashed] (14.4,2.4) to [out=-10,in=165] (16,2);
\draw [->,ultra thick,green,dashed] (10,2) -- (11.4,3.4);
\draw [ultra thick,green,dashed] (11.4,3.4) -- (12,4);
\draw [->,ultra thick,green,dashed] (16,4) -- (17.4,2.6); 
\draw [ultra thick,green,dashed] (17.4,2.6) -- (18,2);
\draw [->,ultra thick,green,dashed] (12,4) to [out=-15,in=205] (15.3,3.5);
\draw [ultra thick,green,dashed] (15.3,3.5) to [out=25,in=220] (16,4);
\draw [->,ultra thick,red,dashed] (10,0) to [out=30,in=170] (12.6,0.6); 
\draw [ultra thick,red,dashed] (12.6,0.6) to [out=-10,in=150] (14,0);
\draw [->,ultra thick,red,dashed] (14,0) -- (14,1.5);
\draw [ultra thick,red,dashed] (14,1.5) -- (14,2);
\draw [->,ultra thick,red,dashed] (14,2) -- (16.6,0.7);
\draw [ultra thick,red,dashed] (16.6,0.7) -- (18,0);
\draw [fill=red,red] (10, 0) circle [radius=0.15];
\draw [fill=red,red] (10, 4) circle [radius=0.15];
\draw [fill=red,red] (14, 0) circle [radius=0.15];
\draw [fill=red,red] (14, 2) circle [radius=0.15];
\draw [fill=red,red] (14, 4) circle [radius=0.15];
\draw [fill=red,red] (18, 0) circle [radius=0.15];
\draw [fill=red,red] (18, 4) circle [radius=0.15];
\draw [fill=green,green] (10, 2) circle [radius=0.15];
\draw [fill=green,green] (12, 0) circle [radius=0.15];
\draw [fill=green,green] (12, 4) circle [radius=0.15];
\draw [fill=green,green] (16, 0) circle [radius=0.15];
\draw [fill=green,green] (16, 4) circle [radius=0.15];
\draw [fill=green,green] (18, 2) circle [radius=0.15];
\draw [fill=blue,blue] (12, 2) circle [radius=0.15];
\draw [fill=blue,blue] (16, 2) circle [radius=0.15];

\path [fill=lightgray] (0,6) -- (2,6) -- (2,8);
\path [fill=lightgray] (0,10) -- (0,8) -- (2,8);
\path [fill=lightgray] (2,8) -- (2,10) -- (4,8);
\path [fill=lightgray] (2,8) -- (4,6) -- (6,6);
\path [fill=lightgray] (2,10) -- (4,10) -- (6,8);
\path [fill=lightgray] (6,6) -- (6,8) -- (4,8);
\path [fill=lightgray] (6,10) -- (6,8) -- (8,10);
\path [fill=lightgray] (6,8) -- (8,8) -- (8,6);
\draw [ultra thick, red] (0,8) -- (2,8);
\draw [ultra thick, red] (6,8) -- (8,8);
\draw [ultra thick, red] (2,10) -- (2,6);
\draw [ultra thick, red] (6,10) -- (6,6);
\draw [ultra thick, red] (2,10) -- (6,8);
\draw [ultra thick, red] (2,8) -- (6,6);
\draw [ultra thick, green] (0,10) -- (2,8);
\draw [ultra thick, green] (0,6) -- (2,8);
\draw [ultra thick, green] (8,10) -- (6,8);
\draw [ultra thick, green] (8,6) -- (6,8);
\draw [ultra thick, green] (4,10) -- (6,8);
\draw [ultra thick, green] (2,8) -- (4,6);
\draw [ultra thick, green] (6,8) -- (2,8);
\draw [ultra thick, blue] (0,6) -- (0,10) -- (8,10) -- (8,6) -- cycle;
\draw [ultra thick, blue] (2,10) -- (4,8);
\draw [ultra thick, blue] (4,8) -- (6,6);
\draw [fill=red,red] (0, 6) circle [radius=0.15];
\draw [fill=red,red] (0, 10) circle [radius=0.15];
\draw [fill=red,red] (4, 6) circle [radius=0.15];
\draw [fill=red,red] (4, 8) circle [radius=0.15];
\draw [fill=red,red] (4, 10) circle [radius=0.15];
\draw [fill=red,red] (8, 6) circle [radius=0.15];
\draw [fill=red,red] (8, 10) circle [radius=0.15];
\draw [fill=green,green] (0, 8) circle [radius=0.15];
\draw [fill=green,green] (2, 6) circle [radius=0.15];
\draw [fill=green,green] (2, 10) circle [radius=0.15];
\draw [fill=green,green] (6, 6) circle [radius=0.15];
\draw [fill=green,green] (6, 10) circle [radius=0.15];
\draw [fill=green,green] (8, 8) circle [radius=0.15];
\draw [fill=blue,blue] (2, 8) circle [radius=0.15];
\draw [fill=blue,blue] (6, 8) circle [radius=0.15];

\path [fill=lightgray] (10,6) -- (12,6) -- (12,8);
\path [fill=lightgray] (10,10) -- (10,8) -- (12,8);
\path [fill=lightgray] (12,8) -- (12,10) -- (14,8);
\path [fill=lightgray] (12,8) -- (14,6) -- (16,6);
\path [fill=lightgray] (12,10) -- (14,10) -- (16,8);
\path [fill=lightgray] (16,6) -- (16,8) -- (14,8);
\path [fill=lightgray] (16,10) -- (16,8) -- (18,10);
\path [fill=lightgray] (16,8) -- (18,8) -- (18,6);
\draw [ultra thick, red] (10,8) -- (12,8);
\draw [ultra thick, red] (16,8) -- (18,8);
\draw [ultra thick, red] (12,10) -- (12,6);
\draw [ultra thick, red] (16,10) -- (16,6);
\draw [ultra thick, red] (12,10) -- (16,8);
\draw [ultra thick, red] (12,8) -- (16,6);
\draw [ultra thick, green] (10,10) -- (12,8);
\draw [ultra thick, green] (10,6) -- (12,8);
\draw [ultra thick, green] (18,10) -- (16,8);
\draw [ultra thick, green] (18,6) -- (16,8);
\draw [ultra thick, green] (14,10) -- (16,8);
\draw [ultra thick, green] (12,8) -- (14,6);
\draw [ultra thick, green] (16,8) -- (12,8);
\draw [ultra thick, blue] (10,6) -- (10,10) -- (18,10) -- (18,6) -- cycle;
\draw [ultra thick, blue] (12,10) -- (14,8);
\draw [ultra thick, blue] (14,8) -- (16,6);
\draw [->,ultra thick,red,dashed] (10,10) to [out=-60,in=80] (10.6,7.4);
\draw [ultra thick,red,dashed] (10.6,7.4) to [out=260,in=60] (10,6);
\draw [->,ultra thick,red,dashed] (14,10) -- (14,8.5);
\draw [ultra thick,red,dashed] (14,8.5) -- (14,8);
\draw [->,ultra thick,red,dashed] (18,10) to [out=210,in=-10] (15.4,9.4);
\draw [ultra thick,red,dashed] (15.4,9.4) to [out=170,in=-30] (14,10);
\draw [->,ultra thick,green,dashed] (10,8) -- (11.4,9.4);
\draw [ultra thick,green,dashed] (11.4,9.4) -- (12,10);
\draw [->,ultra thick,green,dashed] (12,10) to [out=-65,in=135] (14,7.5);
\draw [ultra thick,green,dashed] (14,7.5) to [out=-45,in=145] (16,6);
\draw [->,ultra thick,green,dashed] (16,10) -- (17.4,8.6); 
\draw [ultra thick,green,dashed] (17.4,8.6) -- (18,8);
\draw [ultra thick,blue,dashed] (16,8) to [out=145,in=-50] (13.3,10);
\draw [->,ultra thick,blue,dashed] (13.3,6) to [out=130,in=-55] (12.7,6.7);
\draw [ultra thick,blue,dashed] (12.7,6.7) to [out=125,in=-65] (12,8);
\draw [ultra thick,blue,dashed] (16,8) to [out=65,in=-90] (16.8,10);
\draw [->,ultra thick,blue,dashed] (16.8,6) to [out=90,in=-80] (16.7,6.7);
\draw [ultra thick,blue,dashed] (16.7,6.7) to [out=100,in=-65] (16,8);
\draw [fill=red,red] (10, 6) circle [radius=0.15];
\draw [fill=red,red] (10, 10) circle [radius=0.15];
\draw [fill=red,red] (14, 6) circle [radius=0.15];
\draw [fill=red,red] (14, 8) circle [radius=0.15];
\draw [fill=red,red] (14, 10) circle [radius=0.15];
\draw [fill=red,red] (18, 6) circle [radius=0.15];
\draw [fill=red,red] (18, 10) circle [radius=0.15];
\draw [fill=green,green] (10, 8) circle [radius=0.15];
\draw [fill=green,green] (12, 6) circle [radius=0.15];
\draw [fill=green,green] (12, 10) circle [radius=0.15];
\draw [fill=green,green] (16, 6) circle [radius=0.15];
\draw [fill=green,green] (16, 10) circle [radius=0.15];
\draw [fill=green,green] (18, 8) circle [radius=0.15];
\draw [fill=blue,blue] (12, 8) circle [radius=0.15];
\draw [fill=blue,blue] (16, 8) circle [radius=0.15];
    \end{tikzpicture}
    \caption{In the toric trinity shown, the top right state belongs to a cyclic component of $14$ states, the bottom left state belongs to an acyclic component of $5$ states, and the bottom right state is an isolated point.}
    \label{fig:component_types}
\end{figure}
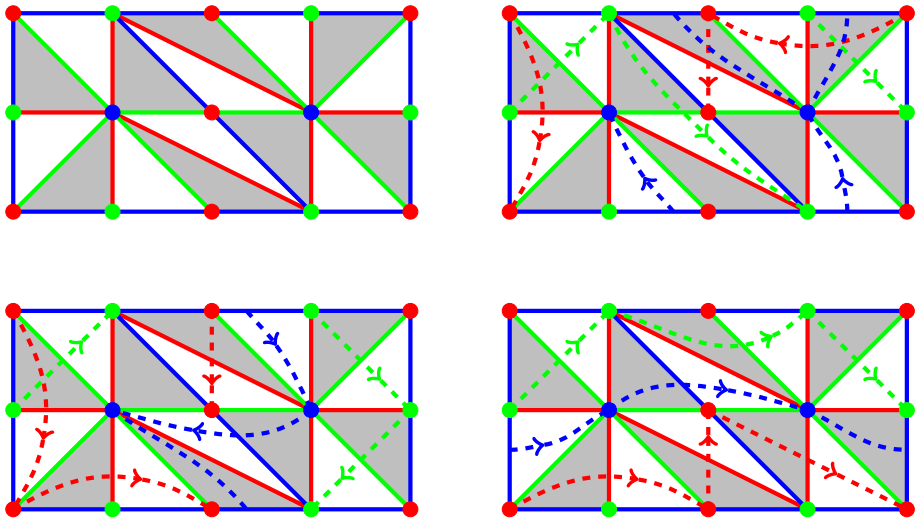

In Figure \ref{fig:component_types}, if we identify the top vertices and edges with the bottom vertices and edges, and identify the left vertices and edges with the right vertices and edges, we obtain a trinity on the torus. It has $28$ states and the state transition graph consists of $1$ cyclic component (of $14$ states) and $6$ acyclic components, $4$ of which are isolated points.
\end{ex}


All toric trinities have Tutte matchings. The reason is very similar to the spherical case, where an arborescence in one color determines the rest of the matching. Instead of arborescences, on the torus we consider collections of edges from one directed graph, $G_R^*$ say, so that every red vertex has exactly one edge of the collection pointing to it and the edges do not form any separating cycles. Let us call such a subgraph a \emph{wreath}. 

\begin{lemma}
Wreaths exist in any toric trinity.
\end{lemma}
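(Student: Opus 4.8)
The plan is to prove the statement for $G_R^*$; the other two colours are handled identically by the symmetry of the trinity. I would first record two structural facts about the directed graph $G_R^*$. It is connected, being the surface dual of the cellularly embedded graph $G_R$. Moreover, at every vertex its in-degree equals its out-degree: a vertex of $G_R^*$ is a red vertex $r$, the edges of $G_R^*$ incident to it are dual to the red edges opposite $r$ in the triangles around $r$, and since these triangles alternate in colour around $r$ (the link of $r$ is an even cycle of black and white triangles), exactly half of the incident edges are oriented towards $r$ (those crossing into white triangles) and half away from it. A connected digraph with balanced in- and out-degrees is Eulerian, hence strongly connected; this is the exact analogue of the input used for arborescences in the spherical case.

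Granting strong connectivity, I would build a wreath from a single well-chosen directed cycle. Suppose $Z$ is a non-separating directed cycle in $G_R^*$. Contracting $V(Z)$ to one root and using that every vertex is reachable from $Z$ (strong connectivity), the contracted digraph admits a spanning out-arborescence; pulling it back gives, for every vertex off $Z$, a unique incoming edge, these edges forming trees that feed into $Z$ and creating no new cycle. Together with the edges of $Z$ (which supply the unique incoming edge at each vertex of $Z$), this yields a subgraph in which every red vertex has exactly one incoming edge and whose only cycle is $Z$. As $Z$ is non-separating, this subgraph is a wreath.

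The one nontrivial point, and the main obstacle, is to produce a non-separating directed cycle, i.e.\ a directed cycle whose class in $H_1(T^2;\mathbf Z)$ is nonzero. I would argue by contradiction, assuming every directed cycle is null-homologous, and pass to the universal abelian cover $\pi\colon\widehat T\to T^2$ with deck group $\mathbf Z^2$; let $\widehat G$ be the preimage of $G_R^*$. Because $G_R^*$ is cellularly embedded, its cycle space surjects onto $H_1(T^2)$, so $\widehat G$ is connected; it is also locally finite, $\mathbf Z^2$-periodic, and still balanced at every vertex. The assumption says precisely that $\widehat G$ admits no directed path from any vertex $\tilde a$ to a nontrivial translate $\gamma\cdot\tilde a$ with $\gamma\neq 0$. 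Fixing $\tilde x$ and letting $R$ be the set of vertices reachable from $\tilde x$ by directed paths, a short path-composition argument (using strong connectivity downstairs) shows that $R$ meets each $\mathbf Z^2$-orbit at most once, so $R$ is finite. On the other hand $R$ is closed under out-edges, so summing the balance condition over $R$ forces the number of edges \emph{entering} $R$ to vanish as well; then no edge crosses the boundary of $R$, and connectivity of $\widehat G$ forces $R=V(\widehat G)$, contradicting the infinitude of $\widehat G$. Hence a non-separating directed cycle exists, and the construction of the previous paragraph completes the proof.
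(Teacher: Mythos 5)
Your proof is correct, and while the overall skeleton (find a non-separating directed cycle in the strongly connected, in/out-balanced digraph $G_R^*$, then attach arborescences pointing away from it to give every remaining red vertex a unique incoming edge) matches the paper's, your treatment of the one genuinely delicate step --- producing a \emph{directed} non-separating cycle --- is quite different. The paper argues constructively: it starts with an arbitrary (undirected) cycle representing a nonzero class in $H_1(T^2;\Z)$ and corrects its orientation by adding the null-homologous directed cycles formed by the duals of the red edges around a fixed green or blue vertex (clockwise around green vertices, counter-clockwise around blue ones), then splits at double points and discards null-homologous components to get an embedded directed non-separating cycle. You instead argue by contradiction in the universal abelian cover: if every directed cycle were null-homologous, the set of vertices reachable from a fixed lift would meet each deck-group orbit at most once (hence be finite), yet the balance condition forces it to have no boundary edges, so by connectedness of the lifted graph it would be all of the infinite vertex set. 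Your route is less elementary (it invokes covering-space theory) but is arguably more robust --- it uses only connectivity, balance, and cellular embedding, with no appeal to the specific geometry of the link cycles, and would transfer verbatim to higher-genus surfaces --- whereas the paper's argument is explicit and yields the desired cycle by a concrete local modification. Both are complete; the remaining steps (strong connectivity from the Eulerian property, and the tree-attachment producing exactly one incoming edge per vertex with no new cycles) coincide in substance with the paper's maximal-collection argument.
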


\begin{proof}
Let $\mathcal T$ be a trinity on the torus $T^2$. Choose any non-zero homology class in $H_1(T^2;\Z)$ and represent it with a cycle $\Omega$ in the $1$-skeleton $G_R^*$. A priori this does not take into account the orientations of the edges. Note however that those edges of $G_R^*$ that are dual to the edges of $G_R$ incident to some fixed blue or green vertex $x$, form a directed cycle. 
This surrounds $x$ in the clockwise direction if $x$ is green and in the counter-clockwise direction if $x$ is blue; in particular, it is null-homologous. By suitable adding such cycles to $\Omega$ we may assume that it is also a directed cycle in $G_R^*$. 

If $\Omega$ has several components then at least one of those has to have a non-zero homology class, so by throwing the rest away we may assume that $\Omega$ is connected. Now at any double point of our cycle, it can be separated into two cycles, at least one of which still represents a non-zero class. Thus we may assume that $\Omega$ is an embedded, non-separating cycle.

Next we extend $\Omega$ into a wreath by adding on arborescences `hanging off' of it. More precisely, let us take a maximal collection $W$ of edges so that
\begin{enumerate}
\item it contains $\Omega$ as its only cycle, 
\item at most one of these edges points to any red vertex, and
\item any connected component not containing $\Omega$ is an isolated point.
\end{enumerate}
Then in the non-trivial connected component of $W$ all vertices have exactly one edge of $W$ pointing to them, for otherwise a path from the vertex to $\Omega$ would contain some vertex with two incoming edges.

Furthermore, as $G_R^*$ is connected and all of its vertices have the same in-degree and out-degree, the graph is Eulerian and in particular strongly connected. Hence if $W$ had an isolated point then we could consider a directed path from a vertex along $\Omega$ to it; the first edge along such a path that ends at an isolated point of $W$ could be used to make $W$ larger. From this contradiction it is clear that $W$ is a wreath.
\end{proof}

Unlike in the spherical case, a wreath for one color does not determine the matching uniquely, but it always has at least one extension.

\begin{thm}
Given a wreath, there are $2^k$ total Tutte matchings extending it, where $k$ is the number of cycles which the wreath contains.
\end{thm}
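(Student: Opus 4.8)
The plan is to fix the red part of the matching using $W$ and to reinterpret the remaining freedom as an orientation-counting problem on a primal subgraph, in complete analogy with the spherical mechanism of Lemma \ref{tuttelemma}. First I would record the structure of $W$ itself: since a wreath has exactly one edge of $G_R^*$ pointing to each red vertex, it is a \emph{functional graph}, so each of its weakly connected components contains exactly one directed cycle, with the remaining edges forming trees feeding into that cycle. In particular the cycles of $W$ are pairwise vertex-disjoint, $k$ equals the number of components of $W$, and counting edges against vertices gives $b_1(W)=k$ as well.

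Next I would translate ``extension of $W$'' into primal data. Let $\Gamma\subset G_R$ be the subgraph of red edges dual to $G_R^*\setminus W$; it has $|E|+|V|$ edges on the $|E|+|V|$ green and blue vertices. Because each white triangle carries exactly one red edge, the arrows of $W$ occupy $|R|$ distinct white triangles, and the unmatched white triangles biject with the edges of $\Gamma$: the triangle $T_e$ adjacent to a red edge $e\in\Gamma$ has its red vertex unavailable, so it must be matched to one of the two endpoints (one green, one blue) of $e$. Hence an extension of $W$ to a Tutte matching is precisely a choice, for each edge of $\Gamma$, of one incident vertex so that every green and blue vertex is used exactly once; equivalently, an orientation of $\Gamma$ with in-degree $1$ at every vertex.

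The count of such orientations is standard: within each connected component the number of edges must equal the number of vertices, so each component must be \emph{unicyclic}, and then there are exactly two admissible orientations (the two cyclic orientations of the unique cycle, after which all tree edges are forced to point away from it). Thus the number of extensions is $2^{c_\Gamma}$ if every component of $\Gamma$ is unicyclic and $0$ otherwise, where $c_\Gamma$ is the number of components. Since $\Gamma$ has equally many edges and vertices, $b_1(\Gamma)=c_\Gamma$; therefore it suffices to show that \emph{no} component of $\Gamma$ is a tree, for then each component carries at least one cycle and the identity $c_\Gamma=b_1(\Gamma)=\sum b_1(\text{component})$ forces every component to be exactly unicyclic. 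It then remains to identify $c_\Gamma$ with $k$.

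Both the unicyclicity and the equality $c_\Gamma=k$ I would extract from the surface topology, and this is the step I expect to be the main obstacle. The cycles of $W$ are $k$ disjoint, non-separating (by the defining property of a wreath), hence pairwise parallel, simple closed curves on $T^2$; cutting the torus along them produces $k$ cylinders. Passing to the complement $M$ of a regular neighbourhood of $W$, one checks that $M$ deformation retracts onto $\Gamma$ and that each of the $k$ cylinders survives as a single component of $M$, an annulus of Euler characteristic $0$. This simultaneously yields $c_\Gamma=k$ and that each component of $\Gamma$ retracts onto one core circle, i.e.\ is unicyclic, giving the count $2^k$ (and incidentally the promised existence, since $k\ge1$). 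The delicate point --- the crux of the argument --- is to verify that the tree branches of $W$, which poke into these cylinders, only remove boundary-attached disks and therefore neither disconnect an annulus nor join its two ends; making this rigorous (for instance via the boundary-circle count $2k$ of the neighbourhood of $W$ together with additivity of $\chi$ across the cutting circles) is where the non-separating hypothesis on $W$ is genuinely used, as it is exactly what prevents $\Gamma$ from acquiring a stray contractible cycle paired with a leftover tree component.
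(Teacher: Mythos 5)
Your argument is correct and follows essentially the same route as the paper's proof: dualize $W$ to the complementary subgraph $\Gamma\subset G_R$, use the $k$ disjoint non-separating (hence parallel) cycles of $W$ to cut the torus into $k$ annuli whose cores are the unique cycles of the $k$ unicyclic components of $\Gamma$, and count exactly two admissible in-degree-one assignments per component. The only difference is one of detail: you make explicit, via the boundary-circle and Euler-characteristic bookkeeping that rules out disk components, the topological step that the paper asserts in a single sentence.
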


\begin{proof}
Let $W\subset G_R^*$ be a wreath. Its $k$ cycles are necessarily parallel on the torus and 
cut it up into $k$ annuli. The dual subgraph $W^*\subset G_R$ also has exactly $k$ cycles which form cores for the annuli. For blue and green vertices that do not lie along any of these cycles (that is, they lie along tree-like subgraphs attached to one of the cycles), there will be exactly one choice of incident white triangle (not already matched to a red vertex) just like in the spherical case. We see this by starting at leaves and working backward toward one of the core cycles. Finally the vertices along the cycles can be matched in exactly two ways for each cycle.
\end{proof}

\begin{rmk}
For two states, having vertices of one color matched to the same triangles in both puts the states in a relation. One may ask how this relation interacts with the other structures investigated in this paper. For example, it is possible for one connected component of the state transition graph to contain multiple states that are related. On the other hand, related states do not always lie in the same component and even the types of the components may be different.
\end{rmk}

Acyclic components associated with toric trinities are governed by Corollary \ref{cor:lattice} and hence they look exactly like state transition graphs of planar trinities. The novelty in toric cases is the presence of cyclic components. Let us investigate their structure more closely.

\begin{prop}
\label{prop:same_turns}
Suppose that a sequence of clockwise moves starts and ends with the same state of an irreducible trinity. Then in the sequence, each black triangle is changed the same number of times.
\end{prop}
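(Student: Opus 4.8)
The plan is to track, for each vertex, the rotation of the single arrow pointing to it, and to exploit the fact that in a loop of \emph{clockwise} moves this rotation never reverses. Because we assume the trinity is irreducible, every move in the sequence is a clockwise move about a single empty black triangle (as in subsection \ref{sec:decomp}), and such a move involves precisely the three vertices of that triangle. Fix a vertex $x$ and list the white triangles incident to it in counter-clockwise cyclic order, with the incident black triangles interleaved between consecutive white ones. By Lemma \ref{lem:set_of_moves-->state}, each clockwise move involving $x$ advances the arrow at $x$ by exactly one step in the counter-clockwise direction, namely across the black triangle that is changed by that move, while a move not incident to $x$ leaves the arrow fixed. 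Thus the number of moves performed on a black triangle $B$ incident to $x$ equals the number of times the arrow at $x$ crosses over $B$.

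First I would observe that, since every move is clockwise, the arrow at $x$ moves monotonically counter-clockwise: its angular position only ever increases. As the sequence returns to its starting state, the arrow returns to its starting position, so it completes some nonnegative integer number $w_x$ of full turns about $x$. A point stepping forward around a cyclic list and returning to its start passes each slot the same number of times, so the arrow crosses each black triangle incident to $x$ exactly $w_x$ times. Hence \emph{all black triangles incident to a common vertex are changed the same number of times}. This is where the hypothesis that all moves are clockwise is essential: the mechanism of Lemma \ref{lem:set_of_moves-->state} would otherwise allow the arrow to oscillate, and the uniform count would fail.

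It then remains to globalize this local equality, for which I would use the connectedness of $\Sigma$. Consider the graph whose vertices are the black triangles and in which two black triangles are joined when they share a vertex of $\mathcal T$; on this graph the function ``number of times changed'' is constant along edges by the previous paragraph. To see the graph is connected, take any edge-path of triangles in the triangulation between two given black triangles, which exists since $\Sigma$ is connected. The colors alternate along it, and two black triangles separated by a single white triangle $W$ must share a vertex, because each meets $W$ in two of its three vertices and $2+2-3=1$. Therefore consecutive black triangles along the path are adjacent in our graph, the graph is connected, and the ``number of times changed'' is globally constant, as claimed.

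The argument is short and I expect no serious obstacle; the one point to get exactly right is the monotonicity step, i.e.\ that a loop of purely clockwise moves forces each arrow to \emph{wind} rather than merely to return, which is precisely what turns Lemma \ref{lem:set_of_moves-->state} into the stated uniform count. Note that on the sphere there are no recurrent states by Proposition \ref{prop:nocycle}, so the statement is only of genuine interest on the torus, but nothing in the above uses the genus.
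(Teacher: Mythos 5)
Your proof is correct and follows essentially the same route as the paper's: a returning arrow under purely clockwise moves must wind an integer number $k$ of full turns about its vertex, so all black triangles incident to that vertex are changed $k$ times, and the constant then propagates by a connectedness argument (the paper uses the $1$-skeleton of $\mathcal T$, you use the vertex-sharing graph on black triangles, which is the same idea). Your write-up just makes explicit the monotonicity and adjacency details that the paper leaves implicit.
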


\begin{proof}
Let us arbitrarily choose a vertex $x$ and observe that the arrow pointing to it makes $k$ full turns during our sequence of moves. This means that the black triangles incident to $x$ are each changed $k$ times (making $k$ full cycles in the cyclic order induced by the torus). The value of $k$ may a priori depend on $x$ but one quickly sees that the same value of $k$ applies to all neighbors of $x$ too, and then by induction and the connectedness of the $1$-skeleton of $\mathcal T$, to every vertex. Hence each black triangle is changed $k$ times.
\end{proof}

\begin{prop}
\label{recurrency}
In a cyclic component of the state transition graph, every state is recurrent.
\end{prop}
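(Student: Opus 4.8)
The plan is to show that, inside any fixed component, the set of recurrent states is closed under passing to an \emph{adjacent} state, and then to conclude by connectedness: a cyclic component contains at least one recurrent state $s_0$ by definition, and every other state of the component is joined to $s_0$ by a finite sequence of single moves, so recurrence propagates to all of them. Thus it suffices to prove that if $s$ is recurrent and $s'$ is obtained from $s$ by a single move, then $s'$ is recurrent. Reversing a non-trivial sequence of clockwise moves fixing $s$ produces a non-trivial sequence of counter-clockwise moves fixing $s$, so possessing a non-trivial clockwise cycle is equivalent to possessing a non-trivial counter-clockwise one; hence, after interchanging the roles of clockwise and counter-clockwise throughout if necessary, we may assume that $s\to s'$ is a clockwise move. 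We may also assume that $\mathcal T$ is irreducible, so that every move changes an empty black triangle, the general case being a routine consequence of the product description in Theorem \ref{thm:graph_is_multiplicative} (recurrence is detected entirely by the toric, irreducible factor, since the planar factors are acyclic by Proposition \ref{prop:nocycle}). So let $\Delta$ be the empty black triangle changed by $s\xrightarrow{\Delta}s'$, and let $C$ be a non-trivial sequence of clockwise moves with $s\xrightarrow{C}s$; we must build a non-trivial clockwise cycle at $s'$.

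The first step is to track the triangle $\Delta$ along $C$. At $s$ the triangle $\Delta$ is clockwise empty. Whenever $\Delta$ is clockwise empty, any \emph{other} clockwise move available at that stage is about a triangle which is itself clockwise empty and therefore, by Lemma \ref{lem:triangles_are_far}, shares no vertex with $\Delta$; Lemma \ref{lem:moves_are_independent} then shows that performing such a move leaves $\Delta$ clockwise empty and that it commutes with the change of $\Delta$. By induction along $C$, the triangle $\Delta$ remains clockwise empty until the first moment (if any) at which $\Delta$ is itself changed.

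Now I would split into two cases. If $\Delta$ is never changed along $C$, the commutations just described let me slide the clockwise change of $\Delta$ past the whole sequence, so that performing $\Delta$ and then $C$ has the same effect as performing $C$ and then $\Delta$, both starting at $s$; since $C$ fixes $s$ this says exactly that $C$ is a non-trivial clockwise cycle fixing $s'=\Delta(s)$, whence $s'$ is recurrent. If instead $\Delta$ is changed somewhere along $C$, I use the same commutations to bubble its first occurrence to the front. Because each swap exchanges two simultaneously available, vertex-disjoint clockwise moves, Lemma \ref{lem:set_of_moves-->state} guarantees that the reordered sequence still runs from $s$ to $s$; it has the form $\Delta\cdot C'$, where $C'$ is a sequence of clockwise moves with $s'\xrightarrow{C'}s$. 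Since $\Delta$ is clockwise empty at $s$ (it takes $s$ to $s'$), the word $C'\cdot\Delta$ carries $s'\xrightarrow{C'}s\xrightarrow{\Delta}s'$ using only clockwise moves and is non-trivial, so again $s'$ is recurrent.

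The main obstacle is precisely the bookkeeping in the second and third paragraphs: one must be certain that the target triangle $\Delta$ genuinely \emph{remains} a clockwise empty triangle at every intermediate stage, and hence stays interchangeable with the surrounding moves, rather than being disturbed by some nearby change. This is where Lemma \ref{lem:triangles_are_far} is essential, as it forces every clockwise move available while $\Delta$ is clockwise empty to occur away from the vertices of $\Delta$, and where Lemma \ref{lem:moves_are_independent} turns that vertex-disjointness into honest commutativity. I would stress that this argument never invokes Proposition \ref{prop:same_turns}; it uses only the local independence of moves, which is why the conclusion holds beyond the irreducible reduction made at the outset.
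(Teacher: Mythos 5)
Your proof is correct, and its overall skeleton --- reduce to the irreducible core via Theorem \ref{thm:graph_is_multiplicative} and Proposition \ref{prop:nocycle}, show that recurrence propagates to neighbours in the state transition graph, and assume without loss of generality that the move $s\to s'$ is clockwise --- is exactly the paper's. Where you diverge is in the central step. The paper invokes Proposition \ref{prop:same_turns} to conclude that $\Delta$ must be changed somewhere along the clockwise cycle $\Omega$ at $s$, and then applies Lemma \ref{lem:cancel} to cancel the counter-clockwise move on $\Delta$ (used to get from $s'$ back to $s$) against that clockwise occurrence. You instead track $\Delta$ locally through the cycle: Lemma \ref{lem:triangles_are_far} forces every other available clockwise move to be vertex-disjoint from $\Delta$, Lemma \ref{lem:moves_are_independent} turns this disjointness into commutativity, and you either slide the change of $\Delta$ past the whole cycle (if $\Delta$ never occurs in it) or bubble its first occurrence to the front. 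This buys you independence from Proposition \ref{prop:same_turns}, which is a global statement about arrows completing full turns on the torus; your argument is purely local and also handles directly the case in which $\Delta$ does not occur in the cycle, a case the paper rules out rather than treats. The price is the bookkeeping that $\Delta$ remains a \emph{clockwise} empty triangle at every intermediate stage, which you carry out. Two minor remarks: Lemma \ref{lem:moves_are_independent} literally asserts only that the other triangle stays empty, so you are relying on the (true, and implicit in its proof) strengthening that it stays clockwise empty; and your closing claim that the argument ``holds beyond the irreducible reduction'' is overstated, since Lemmas \ref{lem:triangles_are_far} and \ref{lem:moves_are_independent} concern empty black triangles and hence the irreducible setting --- but as you made that reduction at the outset, nothing is lost.
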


\begin{proof}
If the trinity decomposes as a connected sum, then by Theorem \ref{thm:graph_is_multiplicative} and Proposition \ref{prop:nocycle}, one of its components is cyclic if and only if the corresponding component of the state transition graph of the irreducible core is cyclic. We will be done if, starting from an arbitrary state, we find a cycle of moves in the irreducible core and do nothing in the planar parts of the connected sum. Hence from now on we may assume that the trinity is irreducible.

It suffices to show that neighbors (in the state transition graph) of recurrent states are themselves recurrent.
So let $s$ be a recurrent state, that is one to which it is possible to return by a non-empty sequence $\Omega$ of clockwise moves. Let also $s'$ be a state obtained from $s$ by a single clock move on the black triangle $\Delta$. Without loss of generality we may assume that the move is clockwise.

We claim that $s'$ is recurrent. This is obvious if $s'$ lies along $\Omega$. Otherwise 
by Proposition \ref{prop:same_turns} a clockwise move on $\Delta$ must appear somewhere else in the cycle $\Omega$. 
Let $a$ be the state which occurs immediately after a clockwise move is done on $\Delta$ in $\Omega$. 
Now if we move from $s'$ to $s$ and then follow $\Omega$ to $a$, then we have a path from $s'$ to $a$ in which the first move is a counter-clockwise move on $\Delta$, and all moves after it are clockwise moves. Since the last move is a clockwise move on $\Delta$, we can apply Lemma \ref{lem:cancel}, and we now have a path of only clockwise moves from $s'$ to $a$. By adding to this the rest of $\Omega$, from $a$ to $s$, and then finally a clockwise move on $\Delta$, we see that $s'$ is indeed recurrent.
\end{proof}

\begin{cor}
\label{reachability}
Starting from any state in a cyclic component of the state transition graph of an irreducible trinity, it is possible to reach any other state in the component by doing only clockwise moves.
\end{cor}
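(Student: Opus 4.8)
The plan is to reduce the corollary to a local statement about neighboring states and then assemble a global clockwise path from local pieces. The local statement I will establish is this: whenever $u$ and $w$ are states of the cyclic component joined by a single clockwise move $u\to w$ about a black triangle $\Delta$, there is a path consisting only of clockwise moves leading from $w$ back to $u$. Granting this, the corollary follows at once. Given arbitrary states $s$ and $t$ of the component, I pick any path between them in the state transition graph. Every step is either a clockwise move, which I keep, or a counter-clockwise move $x\to y$; the latter is the inverse of a clockwise move $y\to x$, so the local statement (applied with $u=y$ and $w=x$) lets me replace the single step $x\to y$ by a detour of clockwise moves from $x$ to $y$. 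Concatenating the steps, expanded where necessary, produces a path of clockwise moves from $s$ to $t$.

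It then remains only to prove the local statement. Since $u$ lies in a cyclic component, Proposition \ref{recurrency} furnishes a non-empty sequence $\Omega$ of clockwise moves that starts and ends at $u$. By Proposition \ref{prop:same_turns} every black triangle is changed the same number of times along $\Omega$, and this number is positive because $\Omega$ is non-empty; in particular $\Omega$ contains a clockwise move about $\Delta$. I fix the first such move and let $a$ be the state reached immediately after it, so that $\Omega$ factors as a clockwise path $u\to\cdots\to a$ (terminating in a clockwise move about $\Delta$) followed by a clockwise path $a\to\cdots\to u$.

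Now consider the path that begins at $w$, performs the counter-clockwise move about $\Delta$ back to $u$ (the inverse of the given move $u\to w$), and then follows the first portion $u\to\cdots\to a$ of $\Omega$. This path runs from $w$ to $a$; aside from its initial counter-clockwise move about $\Delta$, all of its moves are clockwise, and it ends in a clockwise move about $\Delta$. Hence it contains a move and its inverse, and Lemma \ref{lem:cancel} yields another path from $w$ to $a$ made of the same moves minus this canceling pair, namely a path consisting only of clockwise moves. Appending the second portion $a\to\cdots\to u$ of $\Omega$, which is clockwise, completes a clockwise path from $w$ to $u$ and proves the local statement.

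The crux of the argument is precisely this local statement, whose whole purpose is to reverse a single counter-clockwise move using only clockwise ones. The reason this is possible is that recurrence, via Proposition \ref{prop:same_turns}, forces the triangle $\Delta$ to be revisited by a clockwise move within the cycle $\Omega$, and Lemma \ref{lem:cancel} then cleanly disposes of the one unwanted counter-clockwise move. I expect the remaining steps, in particular the concatenation in the reduction, to be routine bookkeeping.
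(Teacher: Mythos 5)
Your proof is correct and uses the same ingredients in the same way as the paper's: Proposition \ref{recurrency} to produce a clockwise cycle, Proposition \ref{prop:same_turns} to guarantee that the cycle contains a clockwise move about the relevant triangle, and Lemma \ref{lem:cancel} to cancel the offending counter-clockwise move against it. The only difference is bookkeeping --- the paper appends one copy of a clockwise cycle based at the terminal state for each counter-clockwise move and cancels globally, whereas you splice a clockwise detour in place of each counter-clockwise move, which is essentially the reversal trick already used in the paper's proof of Proposition \ref{recurrency}.
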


\begin{proof} 
Let $s$ and $s'$ be states that are connected by a path $\Pi$. As the component is cyclic, by Proposition \ref{recurrency} there exists a cycle $\Omega$ of clockwise moves starting and ending at $s'$. For every counter-clockwise move along $\Pi$, concatenate a copy of $\Omega$ to $\Pi$. Now by Proposition \ref{prop:same_turns}, the new path from $s$ to $s'$ contains a clockwise version of the same move for every counter-clockwise move. These can be canceled in pairs by Lemma \ref{lem:cancel}.
\end{proof}

Recurrent states must have at least one clockwise and counter-clockwise move (in the irreducible case, empty black triangle) available, in other words Proposition \ref{recurrency} implies that in cyclic components there are no fully clocked or unclocked states.

\begin{cor}
Cyclic components of the state transition graph of a toric trinity do not contain any local maxima or local minima.
\end{cor}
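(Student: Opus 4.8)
The plan is to obtain this corollary as an immediate consequence of Proposition \ref{recurrency}, which tells us that every state in a cyclic component is recurrent. The entire content will come from unpacking what recurrence means: a state $s$ is recurrent precisely when there is a \emph{non-empty} sequence $\Omega$ of clockwise moves that both starts and ends at $s$. I would argue that the mere existence of such an $\Omega$ forces $s$ to admit moves in both directions, which is exactly what rules out $s$ being a local extremum.

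Concretely, first I would note that the initial move of $\Omega$ is a clockwise move available at $s$. Since a local maximum is by definition a state admitting \emph{only} counter-clockwise moves, the presence of an available clockwise move shows that $s$ is not a local maximum. Second, I would examine the final move of $\Omega$: it is a clockwise move terminating at $s$, say coming from some state $p$, so its reverse is a counter-clockwise move from $s$ to $p$. Hence $s$ admits a counter-clockwise move as well, and since a local minimum admits \emph{only} clockwise moves, $s$ is not a local minimum either. Because Proposition \ref{recurrency} makes every state $s$ of the cyclic component recurrent, this applies uniformly, and I would conclude that the component contains neither local maxima nor local minima.

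I should emphasize the one point on which the whole argument pivots, namely the non-emptiness of $\Omega$ that is built into the definition of recurrence; it is precisely this that simultaneously supplies a genuine first clockwise move and a genuine last clockwise move whose reversal is counter-clockwise. Honestly there is no substantive obstacle remaining at this stage: the real work was carried out in Proposition \ref{recurrency} (and, for connected sums, its reduction to the irreducible core via Theorem \ref{thm:graph_is_multiplicative}), and the observation in the paragraph preceding the corollary already records that cyclic components contain no fully clocked or unclocked states. The proof is therefore just a two-sentence formalization of that remark.
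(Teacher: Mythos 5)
Your argument is correct and is essentially the paper's own: the corollary is stated immediately after the remark that recurrent states must have at least one clockwise and one counter-clockwise move available, and your unpacking of the first and last moves of the non-empty cycle $\Omega$ is exactly the formalization of that remark. Nothing further is needed.
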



Proposition \ref{prop:same_turns} says that all cycle lengths (in the state transition graph of the irreducible trinity $\mathcal T$) are multiples of $n(\mathcal T)$, but it does not say how long the shortest cycles are. It turns out that the answer is $n$, even if we fix the starting point. That is, in irreducible trinities, the following stronger version of Proposition \ref{recurrency} is true.

\begin{thm}
\label{thm:one_turn}
In a cyclic component of the state transition graph of the irreducible toric trinity $\mathcal T$, for any state it is possible to find a linear order of the black triangles of $\mathcal T$ so that, starting from the given state, we may operate on the triangles in that order (and then the state automatically recurs at the end).
\end{thm}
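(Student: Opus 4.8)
The plan is to reduce the theorem to a purely order-theoretic statement about a single clockwise cycle through the given state, and then to exhibit an explicit order ideal. First I would observe that it suffices to produce \emph{any} clockwise cycle based at the given state $s$ of length exactly $n=n(\mathcal T)$: by Proposition \ref{prop:same_turns} every clockwise cycle changes each black triangle the same number $k$ of times and hence has length $kn$, so a cycle of length $n$ forces $k=1$, i.e. each triangle is changed exactly once, and the order in which the moves occur is the desired linear order (the recurrence being then automatic). The recurrence of $s$ (Proposition \ref{recurrency}) already furnishes a clockwise cycle $\Omega$ of some length $kn$, changing each triangle $k$ times, which I will use as a reservoir of moves to reshuffle.

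Next I would encode the validity of clockwise sequences combinatorially. Fix a vertex $x$ and list the black triangles incident to $x$ in the cyclic order in which they surround $x$. Because a clockwise move at an incident triangle rotates the arrow at $x$ by one notch (Lemma \ref{lem:set_of_moves-->state}), and a triangle $\Delta$ is operable precisely when the arrows at its three vertices sit in the unique position making $\Delta$ an empty clockwise triangle, the firings of the triangles incident to $x$ are forced to occur in a fixed round-robin (cyclic) order determined by the arrow position of $x$ in $s$. Viewing the $kn$ moves of $\Omega$ as tokens, these per-vertex constraints define a partial order $P$ (the transitive closure of the union of the per-vertex chains), and I would verify that a clockwise sequence based at $s$ is \emph{valid} if and only if it is a linear extension of $P$: respecting the chain through each vertex of $\Delta$ guarantees that exactly the right number of prior moves have rotated each of the three arrows into the operable position, and no more.

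The heart of the argument is then to show that the set $I$ consisting of the \emph{first} firing of each black triangle is an order ideal of $P$ of size $n$. Assigning to each token its \emph{round index} $r$ (it is the $r$-th firing of its triangle), I would check that along every covering relation of a per-vertex chain the round index is non-decreasing, hence it is non-decreasing along every relation of $P$; since the elements of $I$ have round index $1$, everything below them also has round index $1$ and so lies in $I$. As $I$ is a down-set, some linear extension of $P$ lists $I$ first; its initial segment of $n$ moves changes each triangle exactly once, and by Lemma \ref{lem:set_of_moves-->state} this returns every arrow to its starting position, so the state recurs after precisely these $n$ moves. Reading off the order of this initial segment proves the theorem.

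I expect the main obstacle to be the local analysis underpinning the equivalence ``linear extension of $P$ $\iff$ valid clockwise sequence.'' One must pin down, convention and all, that operability of $\Delta$ is equivalent to all three incident arrows being in their unique pre-$\Delta$ positions, and that this in turn is equivalent to having performed exactly the chain-predecessors (and none of the chain-successors) of the corresponding token at each of the three vertices. This is where the round-robin structure, and ultimately the reason the cycle closes up after a single turn, is genuinely used; it is also the point at which the distinction between separating and non-separating cycles enters, as foreshadowed in the remark following Figure \ref{fig:general_clock}.
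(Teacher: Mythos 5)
Your argument is correct, but it takes a genuinely different route from the paper's. The paper proceeds greedily: assuming a sequence of clockwise moves on distinct triangles gets stuck before exhausting all $n$ of them, it follows the arrow at a vertex whose turn is incomplete to a blocking triangle $\Delta_0$, iterates to produce an eventually periodic chain $x_0,\Delta_0,x_1,\Delta_1,\ldots$ of triangles none of which can ever become empty, and derives from Proposition \ref{prop:same_turns} that the state is not recurrent, contradicting Proposition \ref{recurrency}. You instead take the full recurrence cycle $\Omega$ of length $kn$ supplied by Proposition \ref{recurrency} as a reservoir, identify the valid reorderings of its moves with the linear extensions of the poset generated by the per-vertex round-robin chains, and extract the $n$ first firings as an order ideal using the monotonicity of the round index along each chain. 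Both proofs rest on the same local fact --- a black triangle is operable exactly when the arrows at its three vertices each sit in the unique pre-position, so operability reduces to per-vertex firing counts --- and both invoke Propositions \ref{prop:same_turns} and \ref{recurrency}. Your version buys more structure: iterating the ideal extraction decomposes any clockwise cycle through $s$ into $k$ consecutive single turns, in the abelian/least-action spirit of chip-firing. The paper's version buys locality: the stuck-cycle analysis is precisely where the separating versus non-separating dichotomy of the earlier remark becomes visible, whereas your bookkeeping absorbs that into the existence of $\Omega$. The only step you should spell out is the ``linear extension implies valid'' direction: at the moment a token for $\Delta$ is reached, the number of already-performed moves incident to a vertex $x$ of $\Delta$ equals its number of chain-predecessors at $x$ because the tokens incident to $x$ form a chain in $P$ and are therefore all comparable to the current token; this deserves a sentence but is not a gap.
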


\begin{proof}
From Proposition \ref{prop:same_turns} and its proof it is clear that our goal is to find a sequence of moves that gives one full turn, around its head, to each arrow of the matching 
in the counter-clockwise direction.

In a sense, the desired sequence constructs itself: as long as we have not yet operated on all black triangles, there will always be one that is currently empty (and clockwise) and is yet to be changed. To show this, let $s$ be a state in a cyclic component and suppose for contradiction that we have found a sequence $\Pi$ of clockwise moves about distinct black triangles so that the starting state $s$ has not yet recurred but the last state $s'$ is such that all of its clockwise empty black triangles have already occurred in $\Pi$. To simplify language, let us from now on consider $\Pi$ as a set of black triangles.

Let $x_0$ be any vertex so that the arrow pointing to it has not yet completed its turn. The next move of the arrow would be affected by changing the black triangle $\Delta_0$ to the right of the arrowhead. So we see that $\Delta_0$ has not been changed yet and by our assumption, it is currently (in $s'$) not empty.

Let $x_1$ be the next vertex of $\Delta_0$ in the counter-clockwise direction after $x_0$. See Figure \ref{fig:sequence}. Since $\Delta_0$ has not yet been changed, the arrow pointing to $x_1$ has not yet completed its turn. Hence we may iterate the argument of the previous paragraph and find the non-empty black triangle $\Delta_1$ incident to $x_1$. Since $\Delta_1$ is the only black triangle about which a clockwise move would move the arrow pointing to $x_1$ out of its current position, we see that the arrow can not became incident to $\Delta_0$, and hence $\Delta_0$ cannot become empty, before $\Delta_1$ becomes empty and is changed.

Continuing this way, we generate the sequence $x_0,\Delta_0,x_1,\Delta_1,x_2,\Delta_2,\ldots$ of vertices and non-empty (in the state $s'$) black triangles so that consecutive elements are incident and none of the $\Delta_i$ is in $\Pi$. Furthermore, in any sequence of clockwise moves starting from $s'$, the triangle $\Delta_i$ can not become empty before $\Delta_{i+1}$ becomes empty. Note also that the colors of the $x_i$ cycle through red, green, and blue with complete regularity.

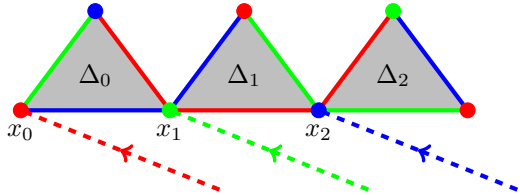
\begin{figure} [h]
\begin{tikzpicture}[scale=.66] 
\path [fill=lightgray] (0,0) -- (1.5,2) -- (3,0);
\path [fill=lightgray] (3,0) -- (4.5,2) -- (6,0);
\path [fill=lightgray] (6,0) -- (7.5,2) -- (9,0);
\draw [ultra thick, red] (1.5,2) -- (3,0) -- (6,0) -- (7.5,2);
\draw [ultra thick, green] (0,0) -- (1.5,2);
\draw [ultra thick, green] (4.5,2) -- (6,0) -- (9,0);
\draw [ultra thick, blue] (0,0) -- (3,0) -- (4.5,2);
\draw [ultra thick, blue] (7.5,2) -- (9,0);
\draw [->,ultra thick,red,dashed] (4,-1.6) to (2,-.8);
\draw [ultra thick,red,dashed] (2,-.8) to  (0,0);
\draw [->,ultra thick,green,dashed] (7,-1.6) to (5,-.8);
\draw [ultra thick,green,dashed] (5,-.8) to  (3,0);
\draw [->,ultra thick,blue,dashed] (10,-1.6) to (8,-.8);
\draw [ultra thick,blue,dashed] (8,-.8) to  (6,0);
\draw [fill=red,red] (0, 0) circle [radius=0.15];
\draw [fill=red,red] (4.5,2) circle [radius=0.15];
\draw [fill=red,red] (9,0) circle [radius=0.15];
\draw [fill=green,green] (3, 0) circle [radius=0.15];
\draw [fill=green,green] (7.5,2) circle [radius=0.15];
\draw [fill=blue,blue] (6, 0) circle [radius=0.15];
\draw [fill=blue,blue] (1.5,2) circle [radius=0.15];
\node [below] at (0,-.1) {$x_0$};
\node [below] at (3,-.1) {$x_1$};
\node [below] at (6,-.1) {$x_2$};
\node at (1.5,.7) {$\Delta_0$};
\node at (4.5,.7) {$\Delta_1$};
\node at (7.5,.7) {$\Delta_2$};
    \end{tikzpicture}
    \caption{Generating a sequence of `stuck triangles.'}
    \label{fig:sequence}
\end{figure}

Since $\mathcal T$ is finite, our sequence eventually becomes periodic. Without loss of generality we may assume that $x_{3k}=x_0$ for some natural number $k$ (and that $x_0\ne x_i$ for $0<i<3k$). This implies $\Delta_{3k}=\Delta_0$, too. In the case when $k=1$ and $\Delta_0=\Delta_1=\Delta_2$, we have just found the contradiction that $\Delta_0$ is a clockwise empty black triangle. Otherwise the sequence $\Delta_0,\ldots,\Delta_{3k-1}$ contains at least two triangles and we see that none of these triangles can be changed in any sequence of clockwise moves starting from $s'$.
By Proposition \ref{prop:same_turns} this implies that $s'$ is not a recurrent state, and that contradicts Proposition \ref{recurrency}. Therefore, in a cyclic component, the next clockwise move can always be found until all arrows complete their turns.
\end{proof}

There are many more elementary questions that one may ask about cyclic and acyclic components in toric trinities. Some of them can be answered easily: For example,
it is not hard to find trinities with multiple cyclic components in their state transition graphs. It is also not difficult to avoid isolated points --- in a sense, a `typical' trinity will not have them. Other problems however are still open. For instance, given a state, we are not aware of a quick test to decide the type of its component. Let us conclude with another natural question, namely: do all toric trinities have to have both types of components? If not, is one of the two types unavoidable or are there trinities with only cyclic components \emph{and} trinities with only acyclic ones?

\end{document}